\newtheorem{theorem}{Theorem}[section]
\newtheorem{lemma}{Lemma}[section]
\newtheorem{proposition}{Proposition}[section]
\newtheorem{corollary}{Corollary}[section]
\newtheorem{remark}{Remark}
\numberwithin{equation}{section}
\theoremstyle{definition}
\theoremstyle{remark}
\numberwithin{equation}{section}
\begin{document}

\title[Hsiung-Minkowski formulas and applications]{An extension of Hsiung-Minkowski formulas and some applications}

\author{Kwok-Kun Kwong}
\address{Department of Mathematics,  National Cheng Kung University, Tainan City 70101, Taiwan}
\email{kwong@math.ncku.edu.tw}

\renewcommand{\subjclassname}{\textup{2010} Mathematics Subject Classification}
\subjclass[2010]{Primary 53C40; Secondary 53C50}
\date{}

\maketitle

\begin{abstract}
We prove a generalization of Hsiung-Minkowski formulas for closed submanifolds in semi-Riemannian manifolds with constant curvature. As a corollary, we obtain volume and area upper bounds for $k$-convex hypersurfaces in terms of a weighted total $k$-th mean curvature of the hypersurface. We also obtain some Alexandrov-type results and some eigenvalue estimates for hypersurfaces.
\end{abstract}

\section{Introduction}\label{sec: intro}

  There has been a number of results in the literature on Minkowski-type formulas, which relate the integrals of different (weighted) $k$-th mean curvatures for closed oriented hypersurfaces or submanifolds in a Riemannian manifold (see e.g. \cite{alencar1998integral, chen1971integral, hsiung1956some, reilly1973variational, strubing1984integral}), under various assumptions. As a typical example, let us recall the classical Hsiung-Minkowski formulas \cite{hsiung1956some}: if $(M, g)$ is a space form and $\Sigma$ is a closed oriented hypersurface in $M$ with a unit normal vector field $\nu$, suppose $M$ possesses a conformal vector field $X$, i.e. the Lie derivative of $g$ satisfies $\mathcal L_Xg= 2 \alpha g$ for some function $\alpha$, then we have
  \begin{equation}\label{eq: typical}
    \int_\Sigma \alpha\sigma_k= \int_\Sigma \sigma_{k+1}\nu\cdot X.
  \end{equation}

Somewhat surprisingly, many geometric results can be deduced from these simple formulas, most notably rigidity results such as Alexandrov's theorem (\cite{montiel1991compact}) or various characterizations of certain hypersurfaces (e.g. \cite{alencar1998integral, koh1998characterization}).  It is interesting to know if the integrands in \eqref{eq: typical} can be less restrictive, and if so, to what extent can these formulas be applied to generalize the aforementioned results, which is the aim of this paper. Indeed, we give a simple generalization of Hsiung-Minkowski formulas for closed submanifolds in semi-Riemannian manifolds with constant curvature.
For example, as a special case of Theorem \ref{thm: main}, we have the following result:
\begin{theorem}\label{thm: 0}
Suppose $(M^n,g)$ has constant curvature and $\Sigma$ is a closed oriented hypersurface. Assume
$X\in \Gamma(\phi^*(TM))$ is a conformal vector field along $\Sigma$, and $f$ is a smooth function on $\Sigma$. Then for $0\le k\le n-2$,
  \begin{equation*}
    \int_\Sigma \alpha f \sigma_k =\int_\Sigma f\sigma_{k+1}\nu\cdot X- \frac{1}{(n-1-k){{n-1}\choose k}}\int_\Sigma \langle T_k(\nabla f), X^T\rangle .
  \end{equation*}
   Here $\sigma_k$ is the normalized $k$-th mean curvature, $\alpha$ is defined by $\mathcal{L}_Xg =\alpha g$,
      $\nu$ is a unit normal vector field and $X^T$ is the tangential component of $X$ onto $T\Sigma$.
\end{theorem}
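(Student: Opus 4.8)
The plan is to integrate a divergence identity over the closed manifold $\Sigma$. Concretely, I would apply the divergence theorem to the tangential vector field $Z:=f\,T_k(X^T)$ on $\Sigma$, where $T_k$ is the $k$-th Newton tensor of the shape operator $A$. I will use three standard facts about Newton tensors: (i) $T_k$ is self-adjoint with respect to the induced metric; (ii) the trace identities $\operatorname{tr}T_k=(n-1-k)\binom{n-1}{k}\sigma_k$ and $\operatorname{tr}(A\circ T_k)=(n-1-k)\binom{n-1}{k}\sigma_{k+1}$; and, crucially, (iii) $T_k$ is divergence free when the ambient manifold has constant curvature. Fact (iii) is where the hypothesis on $(M,g)$ enters: it follows by induction on $k$ from the standard recursion expressing $T_k$ through $T_{k-1}$ and $A$, together with the Codazzi equation, whose ambient-curvature term vanishes precisely because $\overline{R}$ has the constant-curvature form $\overline{R}(U,V)W=c\,(\langle V,W\rangle U-\langle U,W\rangle V)$ (so $\overline{R}(e_i,e_j)\nu$ is tangential-free to second order against tangent vectors).

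Next I would compute $\operatorname{div}Z$. By the Leibniz rule, $\operatorname{div}(f\,T_k(X^T))=\langle\nabla f,T_k(X^T)\rangle+f\operatorname{div}(T_k(X^T))$, and self-adjointness of $T_k$ rewrites the first term as $\langle T_k(\nabla f),X^T\rangle$ — this is exactly the extra term in the statement. For the second term, $\operatorname{div}(T_k(X^T))=\langle\operatorname{div}T_k,X^T\rangle+\operatorname{tr}(T_k\circ\nabla X^T)=\operatorname{tr}(T_k\circ\nabla X^T)$ by fact (iii), where $\nabla X^T$ is viewed as the endomorphism $w\mapsto\nabla_w X^T$ of $T\Sigma$. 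So the whole computation reduces to evaluating $\operatorname{tr}(T_k\circ\nabla X^T)$.

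This last trace is the core of the argument, and this is where I would use that $X$ is conformal. Decompose $X=X^T+\frac{\langle X,\nu\rangle}{\langle\nu,\nu\rangle}\nu$ along $\Sigma$; applying $\overline{\nabla}_w$ to this for tangent $w$ and extracting tangential parts via the Gauss and Weingarten formulas, one gets that the tangential component of $\overline{\nabla}_w X$ equals $\nabla_w X^T$ plus a term proportional to $(\nu\cdot X)\,Aw$. Since $T_k$ is self-adjoint, $\operatorname{tr}(T_k\circ\nabla X^T)$ depends only on the symmetric part of $\nabla X^T$; and the symmetric part of $w\mapsto(\overline{\nabla}_w X)^T$ is, by the defining relation $\mathcal L_X g=\alpha g$ (interpreted on $T\Sigma$) together with the Gauss–Weingarten identities, a multiple of the identity plus a multiple of $(\nu\cdot X)A$. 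Substituting and applying the two trace identities in (ii) turns $\operatorname{tr}(T_k\circ\nabla X^T)$ into $(n-1-k)\binom{n-1}{k}\bigl(\alpha\sigma_k-\sigma_{k+1}\,\nu\cdot X\bigr)$. Integrating $\operatorname{div}Z=0$ over $\Sigma$ and dividing by $(n-1-k)\binom{n-1}{k}$ then gives precisely the claimed formula; note the constraint $0\le k\le n-2$ is exactly what makes $\sigma_{k+1}$ and the factor $\tfrac{1}{n-1-k}$ meaningful.

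The main obstacle I anticipate is not conceptual but bookkeeping: one must keep the sign convention for the shape operator consistent with the sign chosen in the normal decomposition of $X$, track the normalization constant relating $\alpha$ to $\mathcal L_X g$, and — in the semi-Riemannian setting — carry the factors $\langle\nu,\nu\rangle=\pm1$ that appear when $\Sigma$ is timelike (which is also why the codimension-one statement is cleanest to write as in Theorem \ref{thm: 0}). The only conceptually load-bearing input is fact (iii), and it is worth double-checking that $\operatorname{div}T_k=0$ genuinely requires the \emph{ambient} curvature to be constant, since this is the sole place the hypothesis on $(M,g)$ is used.
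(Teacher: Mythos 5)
Your proposal is correct and follows essentially the same route as the paper: the paper's Proposition \ref{prop: main} is exactly your computation of $\operatorname{div}(f\,T_k(X^T))$ (with the symmetric part of the tangential derivative of $X$ extracted via the self-adjointness of $T_k$, yielding the $\tfrac12\langle T_k^\flat,\phi^*(\mathcal L_Xg)\rangle$ and $\langle T_k^\flat,A^{X^\perp}\rangle$ terms), and the theorem then follows by invoking $\operatorname{div}T_k=0$ in constant curvature together with the two trace identities of Lemma \ref{lem: identities} and the divergence theorem. The only cosmetic difference is that you reach the symmetric part through the decomposition $X=X^T+X^\perp$ and the Gauss--Weingarten formulas, while the paper differentiates the components $X\cdot e_i$ directly; the substance is identical.
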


The definition of $T_k$ will be given in Section \ref{sec: prelim}. We remark that the classical Hsiung-Minkowski formulas \cite{hsiung1956some} can be recovered by putting $f=1$ in the above formula. To the author's knowledge, these formulas are new, especially in the higher codimension case (cf. Theorem \ref{thm: main}), and generalize the integral formulas in \cite{hsiung1956some}, \cite{chen1971integral}, \cite{alencar1998integral}, \cite{alias2003integral} and \cite{strubing1984integral}. By choosing suitable $f$ in Theorem \ref{thm: 0}, we can obtain the following corollary:
\begin{corollary}\label{cor: 0.1}(Corollary \ref{cor: Euc area})
Suppose $\Sigma$ is a closed hypersurface embedded in $\mathbb{R}^n$ with $\sigma_{k}>0$ for some $1\le k\le n-1$. Then
  $$\mathrm{Area}(\Sigma)\le \int_\Sigma \sigma_1 r\le \cdots\le \int_\Sigma\sigma_{k} r^{k}$$
  and
  $$n\mathrm{Vol}(\Omega)\le \int_\Sigma r\le \int_\Sigma \sigma_1 r^2 \le \cdots \le \int_\Sigma \sigma_{k}r^{k+1}.$$
  Here $r=|X|$ and $\Omega$ is the region enclosed by $\Sigma$. The equality occurs if and only if $\Sigma$ is a sphere centered at $O$.
\end{corollary}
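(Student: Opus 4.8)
The plan is to specialize Theorem~\ref{thm: 0} to $M=\mathbb R^n$ with $X$ the position vector field based at $O$ — a conformal vector field along $\Sigma$ with $\alpha\equiv 1$ — and with $f$ an integer power of $r=|X|$, and then to combine the resulting identities with the Cauchy--Schwarz inequality $\langle X,\nu\rangle\le|X|=r$ and with the divergence theorem. Two elementary facts drive the computation, and I would record them first: on $\Sigma$ one has $\nabla r=X^T/r$, hence $\nabla(r^m)=m\,r^{m-2}X^T$, valid wherever $r\ne 0$; since both sides of all the claimed inequalities depend continuously on $O$, we may assume $O\notin\Sigma$. Also $T_0=\mathrm{Id}$, so the case $j=1$ below is just the classical Hsiung--Minkowski identity $\int_\Sigma \sigma_1\langle X,\nu\rangle=\mathrm{Area}(\Sigma)$.

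The single structural ingredient — and the step I expect to be the main, though standard, obstacle — is the positivity of the lower order quantities. Every closed hypersurface in $\mathbb R^n$ has an elliptic point (for instance a contact point with a circumscribing sphere, where all principal curvatures with respect to the outward normal are positive); combined with $\sigma_k>0$ everywhere and connectedness of $\Sigma$, this forces the principal curvature vector of $\Sigma$ to lie in the G\r{a}rding cone $\Gamma_k$ at every point. Consequently $\sigma_j>0$ for $0\le j\le k$, and the Newton operators $T_0,\dots,T_{k-1}$ are positive semidefinite; see Section~\ref{sec: prelim}.

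For the area chain, fix $1\le j\le k$ and apply Theorem~\ref{thm: 0} with $k$ replaced by $j-1$ and $f=r^{j-1}$. Using $\alpha\equiv1$ and $\nabla(r^{j-1})=(j-1)r^{j-3}X^T$, the identity becomes
\[
\int_\Sigma \sigma_{j-1}\,r^{j-1}
=\int_\Sigma \sigma_j\,r^{j-1}\langle X,\nu\rangle
-\frac{j-1}{(n-j)\binom{n-1}{j-1}}\int_\Sigma r^{j-3}\langle T_{j-1}(X^T),X^T\rangle .
\]
The last integral is nonnegative because $T_{j-1}$ is positive semidefinite, and then $\sigma_j>0$ with $\langle X,\nu\rangle\le r$ gives $\int_\Sigma\sigma_{j-1}r^{j-1}\le\int_\Sigma\sigma_j r^{j}$; letting $j$ run over $1,\dots,k$ and using $\sigma_0\equiv1$ yields the first chain. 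For the volume chain the identical computation with $f=r^{j}$ (so $\nabla(r^j)=j r^{j-2}X^T$) gives $\int_\Sigma\sigma_{j-1}r^{j}\le\int_\Sigma\sigma_j r^{j+1}$ for $1\le j\le k$, while $\operatorname{div}X=n$ and the divergence theorem give $n\,\mathrm{Vol}(\Omega)=\int_\Sigma\langle X,\nu\rangle\le\int_\Sigma r$; concatenating produces the second chain.

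Finally, for the equality case: the defect in each step above is a sum of two nonnegative integrals, one of which is $\int_\Sigma \sigma_j r^{\,\cdot\,}\big(r-\langle X,\nu\rangle\big)$; if any inequality is an equality this integral must vanish, and since $\sigma_j>0$ we get $\langle X,\nu\rangle\equiv r$ on $\Sigma$, hence $X^T\equiv 0$. Because $\nabla(r^2)=2X^T$, the function $r$ is then constant on $\Sigma$, so (with the outward orientation for which $\sigma_k>0$) $\Sigma$ is a round sphere centered at $O$. Conversely, on the sphere of radius $R$ about $O$ one has $r\equiv R$, $\sigma_j\equiv R^{-j}$ and $\langle X,\nu\rangle\equiv R$, so every term of the first chain equals $\mathrm{Area}(\Sigma)$ and every term of the second equals $n\,\mathrm{Vol}(\Omega)$, and all the inequalities become equalities.
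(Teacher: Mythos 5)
Your proposal is correct and follows essentially the same route as the paper: apply the weighted formula (Proposition \ref{prop: euc1}, i.e.\ Theorem \ref{thm: 0} with $f=r^{l}$) together with the positivity of $T_j$ for $j<k$ from \cite{marques1997stability}, the Cauchy--Schwarz bound $X\cdot\nu\le r$, and $n\,\mathrm{Vol}(\Omega)=\int_\Sigma X\cdot\nu$. The only (harmless) variation is that you settle the equality case via $X\cdot\nu\equiv r$ forcing $X^T\equiv 0$, rather than via the vanishing of the $\langle T_j(X^T),X^T\rangle$ term as in the paper.
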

Similar inequalities hold in the hyperbolic space and the hemisphere as well. This generalizes the result in \cite{kwong1906new} and \cite{kwong2012}. (We remark that in \cite{kwong1906new}, a special case of Corollary \ref{cor: 0.1} is proved using the inverse mean curvature flow approach instead of using integral formulas. It is interesting to compare the two approaches.) As another corollary, we have the following extension of Alexandrov's theorem:
\begin{corollary}(A special case of Corollary \ref{cor: alex})
  Suppose $\Sigma$ is a closed hypersurface embedded in $\mathbb{R}^n$. Assume $f>0$, $f'\ge 0$ and there exists $1\le k \le n-1$ such that
    $\sigma_k f(r)$ is constant, where $r$ is the distance from $O$.
   Then $\Sigma$ is a sphere.
\end{corollary}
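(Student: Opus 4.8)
I would run a weighted form of the Heintze--Karcher approach to Alexandrov's theorem, using Theorem~\ref{thm: 0} to absorb the function $f$. Fix $O$ (which we may assume does not lie on $\Sigma$, so $r>0$ there), orient $\Sigma$ by the outward unit normal $\nu$, and let $X$ be the position vector field of $\mathbb{R}^n$, a conformal field along $\Sigma$; comparing the case $k=0$, $f\equiv1$ of Theorem~\ref{thm: 0} with the first Minkowski formula $\int_\Sigma\sigma_1\,\nu\cdot X=\mathrm{Area}(\Sigma)$ (immediate from $\mathrm{div}_\Sigma X^T=(n-1)(1-\sigma_1\,\nu\cdot X)$) identifies the conformal factor as $\alpha\equiv1$. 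A closed hypersurface in $\mathbb{R}^n$ has an elliptic point $p$, the point farthest from $O$, where all principal curvatures are positive; hence $\sigma_k(p)>0$, so the constant value $c:=\sigma_k\,f(r)$ is positive and $\sigma_k=c/f(r)>0$ everywhere on $\Sigma$. Since $\Sigma$ is connected and the G\aa rding cone $\Gamma_k$ is a connected component of $\{\sigma_k>0\}$ containing the principal curvature vector at $p$, a standard connectedness argument (using that $\Gamma_k$ is open and closed in $\{\sigma_k>0\}$) then shows that $\Sigma$ is $k$-convex, so $\sigma_1,\dots,\sigma_k>0$ and the Newton tensors $T_0,\dots,T_{k-1}$ are positive semi-definite.

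Next I would apply Theorem~\ref{thm: 0} with weight $f$ and degree $k-1$ (legitimate since $1\le k\le n-1$). On $\Sigma$ one has $\nabla r=X^T/r$, so $\nabla f=\frac{f'(r)}{r}X^T$ and $\langle T_{k-1}(\nabla f),X^T\rangle=\frac{f'(r)}{r}\langle T_{k-1}(X^T),X^T\rangle\ge0$, because $f'\ge0$, $r>0$, and $T_{k-1}\ge0$. Inserting $f\sigma_k\equiv c$ and $\int_\Sigma\nu\cdot X=n\,\mathrm{Vol}(\Omega)$ (divergence theorem), Theorem~\ref{thm: 0} becomes
\[
\int_\Sigma f\sigma_{k-1}=c\,n\,\mathrm{Vol}(\Omega)-\frac{1}{(n-k)\binom{n-1}{k-1}}\int_\Sigma\frac{f'(r)}{r}\big\langle T_{k-1}(X^T),X^T\big\rangle\ \le\ c\,n\,\mathrm{Vol}(\Omega).
\]

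Finally I would combine this with two classical facts for the embedded, $k$-convex hypersurface $\Sigma=\partial\Omega$: the Newton--MacLaurin inequalities, which give the pointwise bound $\frac{1}{\sigma_1}=\frac{\sigma_0}{\sigma_1}\le\frac{\sigma_1}{\sigma_2}\le\cdots\le\frac{\sigma_{k-1}}{\sigma_k}=\frac{f(r)\,\sigma_{k-1}}{c}$; and the Heintze--Karcher inequality $\int_\Sigma\frac{1}{\sigma_1}\ge n\,\mathrm{Vol}(\Omega)$, with equality if and only if $\Sigma$ is a round sphere. Chaining,
\[
n\,\mathrm{Vol}(\Omega)\ \le\ \int_\Sigma\frac{1}{\sigma_1}\ \le\ \frac1c\int_\Sigma f\sigma_{k-1}\ \le\ n\,\mathrm{Vol}(\Omega),
\]
so every inequality is an equality; in particular the Heintze--Karcher inequality is saturated, and $\Sigma$ is a sphere. (When $f'>0$, the vanishing of the error term above even forces $X^T\equiv0$, i.e.\ $\Sigma$ is centered at $O$; the stated conclusion needs only the Heintze--Karcher rigidity.)

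The genuinely new ingredient is mild: it is exactly the one-sided hypothesis $f'\ge0$, which is precisely what makes the extra term in Theorem~\ref{thm: 0} have the sign compatible with the Heintze--Karcher chain. The real technical obstacle is the a priori positivity on which the argument rests, namely $T_{k-1}\ge0$ (equivalently, that $\Sigma$ lies in the G\aa rding cone): this does not follow formally from $\sigma_k>0$ alone and must be extracted, as above, from the existence of an elliptic point together with the connectedness of $\Sigma$.
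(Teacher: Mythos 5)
Your proposal is correct and follows essentially the same route as the paper's proof of Corollary \ref{cor: alex} specialized to $\mathbb{R}^n$: elliptic point $\Rightarrow$ $\sigma_k>0$ $\Rightarrow$ positivity of $T_{k-1}$ (the paper simply cites Barbosa--Colares, Proposition 3.2, where you unpack the G\aa rding-cone connectedness argument), then the weighted Minkowski formula at level $k-1$ with the error term signed by $f'\ge 0$, the Newton--MacLaurin inequalities, and the Ros/Heintze--Karcher inequality with its rigidity. No gaps; the coefficient $\frac{1}{(n-k)\binom{n-1}{k-1}}$ and the range $1\le k\le n-1$ are handled correctly.
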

This and the other similar corollaries (Corollary \ref{cor: alex2}, \ref{cor: alex3}) generalize the Alexandrov-type results in \cite{ros1987compact}, \cite{montiel1991compact}, \cite{koh1998characterization}, \cite{koh2000sphere}, and \cite{aledo1999integral}. Finally we also give eigenvalue estimates (Theorem \ref{thm: lambda1}, \ref{thm: garay}, \ref{thm: stek}) for a class of elliptic operators which generalize the results in \cite{garay1989application}, \cite{grosjean2002upper}, and \cite{veeravalli2001first}.

The rest of this paper is organized as follows. In Section \ref{sec: prelim} we give the necessary definitions and preliminary results. In Section \ref{sec: main}, we prove the main results. A number of corollaries  are given in Section \ref{sec: cor}.

{\sc Acknowledgments}:
This work was conducted when the author was working as a Research Fellow at Monash University. He would like to thank Monash University for providing an excellent research environment.
\section{Preliminaries}\label{sec: prelim}

Let us fix the notations in this paper.
Let $\phi$ be an isometric immersion of an $m$-dimensional semi-Riemannian manifold $\Sigma$ into an $n$-dimensional semi-Riemannian manifold $(M,g)$.
We  use $\overline \nabla $ and $\nabla $ to denote the connection on $(M,g)$ and $\Sigma$ respectively.
The second fundamental form of $\Sigma $ in $M$ is defined by $A(X,Y)=-({\overline \nabla _XY})^\perp$ and is normal-valued.
We denote $A(e_i, e_j)$ by $A_{ij}$, where $\lbrace e_i\rbrace_{i=1}^m$ is a local orthonormal frame on $\Sigma$. For simplicity, we write $g(X, Y)$ as $X\cdot Y$ and the induced metric on $\Sigma$  is denoted by $\langle \cdot,\cdot\rangle$. For any normal vector field $\nu$ of $\Sigma$ in $M$, we define the scalar second fundamental form $A^\nu\in \mathrm{End}(TM)$ by
$\langle A^\nu(X), Y \rangle= A(X, Y)\cdot \nu$, and let $A^\nu(e_i)= \sum_{j=1}^m (A^\nu)_i ^j e_j$. If $\Sigma$ is a hypersurface, we choose $\nu$ to be the outward unit normal whenever this makes sense.

We define the $k$-th mean curvature as follows. If $k$ is even,
$$ H _k =  \frac 1{k!}\sum_{\substack{i_1,\cdots, i_k\\
j_1, \cdots, j_k}} \epsilon_{j_1\cdots j_k}^{i_1\cdots i_k}(A_{i_1j_1}\cdot A_{i_2j_2})\cdots (A_{i_{k-1}j_{k-1}}\cdot A_{i_kj_k}).$$

If $k$ is odd, the $k$-th mean curvature is a normal vector field defined by
$$ H _k =\frac 1{k!}\sum_{\substack{i_1,\cdots, i_k\\
j_1, \cdots, j_k}} \epsilon _{j_1\cdots j_k}^{i_1\cdots i_k} (A_{i_1j_1}\cdot A_{i_2j_2})\cdots  (A_{i_{k-2}j_{k-2}}\cdot A_{i_{k-1}j_{k-1}})A_{i_k j_k}.$$
We also define $H_0=1$. Here $\epsilon_{i_1 \cdots i_k}^{j_1\cdots j_k}$ is  zero if $i_k=i_l$ or $j_k=j_l$ for some $k\ne l$, or if $\lbrace i_1, \cdots, i_k\rbrace \ne \lbrace j_1, \cdots, j_k\rbrace$ as sets, otherwise it is defined as the sign of the permutation $(i_1, \cdots, i_k)\mapsto (j_1, \cdots, j_k)$. We also define the normalized $k$-th mean curvature as
$$\sigma_k =\frac{H_k}{{m\choose k}}.$$

In the codimension one case, i.e. $\Sigma$ is a hypersurface, by taking the inner product with a unit normal if necessary, we can assume $H_k$ is scalar valued. In this case the value of $H_k$ is given by
\begin{equation}\label{eq: codim 1}
H_k =\pm\sum_{i_1<\cdots< i_k}\lambda_{i_1}\cdots \lambda_{i_k}
\end{equation}
where $\{\lambda_i\}_{i=1}^n$ are the principal curvatures. This definition of $H_k$  is used whenever $\Sigma $ is a hypersurface.

Following \cite{grosjean2002upper} and \cite{reilly1973variational}, we define the (generalized) $k$-th Newton transformation $T_k$ of $A$ (as a $(1,1)$ tensor, possibly vector-valued) on $\Sigma$ as follows.\\
If $k$ is even,
$$ {(T_k)}_j^{\,i}= \frac 1 {k!}
\sum_{\substack{i_1,\cdots, i_k\\ j_1, \cdots, j_k}}
\epsilon^{i  i_1 \ldots  i_k}_{j  j_1 \ldots  j_k}
 (A_{i_1j_1}\cdot A_{i_2j_2})\cdots  (A_{i_{k-1}j_{k-1}}\cdot A_{i_kj_k}).
$$
If $k$ is odd,
$${(T_k)}_j^{\,i}= \frac 1 {k!}
\sum_{\substack{i_1,\cdots, i_k\\ j_1, \cdots, j_k}}
\epsilon^{i  i_1 \ldots  i_k}_{j  j_1 \ldots  j_k}
 (A_{i_1j_1}\cdot A_{i_2j_2})\cdots  (A_{i_{k-2}j_{k-2}}\cdot A_{i_{k-1}j_{k-1}})A_{i_kj_k}.
$$
We also define $T_0= I$, the identity map. Again, in the codimension one case, by taking the inner product with a unit normal if necessary, we can assume $T_k$ is an ordinary $(1,1)$ tensor and if $\lbrace e_i\rbrace_{i=1}^m$ are the eigenvectors of $A$, then
$$T_k(e_i)=\pm\sum_{\substack{i_1<\cdots <i_k\\
i\ne i_l}}\lambda_{i_1}\cdots \lambda_{i_k}e_i.$$

This definition of $T_k$ is used whenever $\Sigma$ is a hypersurface.
Alternatively, in the hypersurface case, $T_k$ can be defined recursively by (see e.g. \cite{reilly1973variational})
\begin{equation}\label{eq: T}
\begin{split}
  T_0={I}\quad\mathrm{ and }\quad& T_{k}=H_k {I} - AT_{k-1}\textrm{ for }k\ge 1.
\end{split}
\end{equation}
Here $A=A^\nu$, where $\nu$ is the unit normal to $\Sigma$.

We collect some basic properties of $T_k$ and $H_k$:
\begin{lemma}\label{lem: identities}
  We have
  \begin{enumerate}
    \item\label{item: 1}
    $\mathrm{tr}(T_k)= (m-k)H_k=(m-k){m\choose k}\sigma_k$.
    \item\label{item: 2}
    If $M$ has constant curvature, then
    $\mathrm{div}(T_k)=0$. i.e.
    $\sum_{i=1}^m\nabla _{e_i}(T_{k})^i_{j}=0.$ (Here $\nabla $ is the normal connection if $k$ is odd. ) If $k=1$ and $m=n-1$, we can assume $M$ is Einstein instead. If $k=0$, we can remove any assumption on $M$.
    \item\label{item: 5}
    If $k$ is even, then
    $\sum_{i,j=1}^m (T_k)_i^jA_{ij}=(k+1) H_{k+1}. $
    If $k$ is odd, then
    $\sum_{i,j=1}^m (T_k)_i^j\cdot A_{ij}=(k+1) H_{k+1}. $
  \end{enumerate}
\end{lemma}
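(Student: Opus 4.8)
The first and third identities are purely algebraic and come from the single contraction formula for the generalized Kronecker symbol,
\[
\sum_{a=1}^m \epsilon^{a\, i_1 \cdots i_k}_{a\, j_1 \cdots j_k} = (m-k)\,\epsilon^{i_1 \cdots i_k}_{j_1 \cdots j_k},
\]
which is immediate from the definition. For the trace identity, setting the two free indices of $(T_k)_j^{\,i}$ equal and summing turns the defining formula for $T_k$ into $(m-k)$ times the defining formula for $H_k$, and the last equality is just the definition of $\sigma_k$. For the third identity I would relabel the two free indices of $(T_k)_i^{\,j}$ as an extra pair $(i_{k+1},j_{k+1})$; using $A_{ij}=A_{ji}$ and the total antisymmetry of $\epsilon$ in its upper and in its lower indices, this extra pair can be cycled into standard position so that $\sum_{i,j}(T_k)_i^{\,j} A_{ij}$ (respectively the $\cdot$-contraction when $k$ is odd) becomes $\tfrac1{k!}\sum \epsilon^{i_1 \cdots i_{k+1}}_{j_1 \cdots j_{k+1}}(\cdots)$ summed over precisely the monomials occurring in the definition of $H_{k+1}$ --- the factor $A_{i_{k+1}j_{k+1}}$ appears as the lone term when $k$ is even and as part of a new dot product when $k$ is odd, which matches exactly the even/odd dichotomy in the definition of $H_{k+1}$. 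Comparing the prefactors $\tfrac1{k!}$ and $\tfrac1{(k+1)!}$ yields the stated factor $k+1$.

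The substantive claim is the divergence identity. Fix $p\in\Sigma$ and choose a local orthonormal frame $\{e_i\}$ on $\Sigma$ that is geodesic at $p$, together with (when $k$ is odd) a local orthonormal normal frame that is $\nabla^\perp$-parallel at $p$; then at $p$ all covariant derivatives reduce to ordinary derivatives of the relevant component functions. Differentiating the defining formula for $(T_k)_j^{\,i}$ in the direction $e_i$ and summing over $i$, the Leibniz rule expresses $\sum_i \nabla_{e_i}(T_k)_j^{\,i}$ at $p$ as a sum of terms, in each of which exactly one factor $A_{i_\ell j_\ell}$ has been replaced by $(\nabla_{e_i}A)(e_{i_\ell},e_{j_\ell})$ while the remaining factors and the symbol $\epsilon^{i\, i_1 \cdots i_k}_{j\, j_1 \cdots j_k}$ are untouched.

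Now comes the key input, the Codazzi equation $(\nabla_X A)(Y,Z)-(\nabla_Y A)(X,Z)=(\overline R(X,Y)Z)^\perp$ for tangent $X,Y,Z$, where $\overline R$ is the curvature of $M$ and $\nabla$ is the Van der Waerden--Bortolotti connection. When $M$ has constant curvature, $\overline R(X,Y)Z$ is a combination of $X$ and $Y$, hence tangent to $\Sigma$, so the right-hand side vanishes and $\nabla A$ is \emph{totally symmetric} in its three arguments. In particular $(\nabla_{e_i}A)(e_{i_\ell},e_{j_\ell})$ is symmetric under the exchange $i\leftrightarrow i_\ell$, whereas $\epsilon^{i\, i_1 \cdots i_k}_{j\, j_1 \cdots j_k}$ is antisymmetric under it; summing over $i$ and $i_\ell$ therefore kills each term, giving $\mathrm{div}(T_k)=0$ at $p$, and $p$ was arbitrary. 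The exceptional cases are handled directly: $T_0=I$ is divergence-free with no hypothesis on $M$; and for $k=1$, $m=n-1$, I would use the recursion $T_1=H_1 I-A$, so that $\mathrm{div}(T_1)_j=\nabla_j H_1-\sum_i(\nabla_{e_i}A)(e_i,e_j)$, and tracing Codazzi over $(X,Z)=(e_i,e_i)$ gives $\sum_i(\nabla_{e_i}A)(e_i,e_j)=\nabla_j H_1+\overline{\mathrm{Ric}}(e_j,\nu)$; thus $\mathrm{div}(T_1)_j=-\overline{\mathrm{Ric}}(e_j,\nu)$, which vanishes if $M$ is Einstein since then $\overline{\mathrm{Ric}}$ is a multiple of $g$ and $e_j\perp\nu$. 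The only real difficulty is bookkeeping: keeping the $\epsilon$-contractions organized in the third identity, and making sure the normal-bundle form of Codazzi is invoked in the odd-$k$ case so that the symmetry/antisymmetry cancellation in the divergence computation applies verbatim to the vector-valued $T_k$.
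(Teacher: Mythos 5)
Your proposal is correct and is exactly the standard argument that the paper delegates to its references (Grosjean, Barbosa--Colares): the $\epsilon$-symbol contraction identity for (1) and (3), and for (2) the total symmetry of $\nabla A$ coming from Codazzi in a constant-curvature ambient space, paired against the antisymmetry of $\epsilon$ in its upper indices. Your treatment of the $k=1$ Einstein case via the traced Codazzi equation, producing the residual term $\overline{\mathrm{Ric}}(e_j,\nu)$ which vanishes since $e_j\perp\nu$, is the same observation the paper makes ($\mathrm{div}(A)=dH_1$ under the Einstein hypothesis), and the $k=0$ case is trivial in both.
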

\begin{proof}
These equations are well-known, at least in the codimension one case (e.g. \cite{marques1997stability} Lemma 2.1). They can be found e.g. in \cite{grosjean2002upper} Lemma 2.1, 2.2 and \cite{kwong2012inequality} Lemma 2.1. For \eqref{item: 2}, if $k=1$ and $m=n-1$, then by Codazzi equation, we have $\mathrm{div}(A)= dH_1$, which is equivalent to $\mathrm{div}(T_1)=0$ by \eqref{eq: T}. The assertions are trivial for $k=0$.
\end{proof}
\section{Main results}\label{sec: main}

In this section, we prove the main result (Theorem \ref{thm: main}) in this paper. It turns out that our result is an almost immediate consequence of a fairly simple divergence formula (Proposition \ref{prop: main}), which may have applications elsewhere. We use the notations in Section \ref{sec: prelim}.  Throughout this section, we also assume that $\Sigma$ is a closed and oriented semi-Riemannian manifold isometrically immersed in $(M,g)$. We  omit the area element $dS$ or volume element $dV$ in the integrals when there is no confusion.

\begin{proposition}\label{prop: main}
  Let $T$ be a symmetric $(1,1)$ tensor on $\Sigma$, $f$ be a smooth function on $\Sigma$ and $X$ be a vector field on a neighborhood of $\Sigma$ in $M$. Then
  $$\mathrm{div}(fT(X^T))=\langle T(\nabla f), X^T\rangle+ f(\mathrm{div}\;T)(X^T)+ \frac{1}{2}f\langle T^\flat, \phi^* (\mathcal{L}_Xg )\rangle
      -f\langle T^\flat,  A^{X^\perp }\rangle.$$
      Here $\mathrm{div}$ is the divergence on $\Sigma$, $T^\flat$ is the $(0,2)$-tensor defined by $T^\flat (Y, Z)= \langle T(Y), Z\rangle$, $X^T$ (resp. $X^\perp$) is the tangential (resp. perpendicular) component of $X$ and $\mathcal{L}_Xg $ is the Lie derivative of $g$.
\end{proposition}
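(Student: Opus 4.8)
The plan is to expand the left-hand side with the Leibniz rule and then recognize each resulting piece via the Gauss and Weingarten equations. I will work as invariantly as possible, which is convenient in the semi-Riemannian setting; whenever a frame is needed I take a local frame $\{e_i\}$ on $\Sigma$ and carry the usual signs $\epsilon_i=\langle e_i,e_i\rangle$. First, for any vector field $V$ on $\Sigma$ one has $\mathrm{div}(fV)=f\,\mathrm{div}(V)+\langle\nabla f,V\rangle$. Applying this to $V=T(X^T)$ and using that $T$ is self-adjoint with respect to $\langle\cdot,\cdot\rangle$ rewrites $\langle\nabla f,T(X^T)\rangle$ as $\langle T(\nabla f),X^T\rangle$, so the proposition reduces to the identity
\[
  \mathrm{div}(T(X^T))=(\mathrm{div}\,T)(X^T)+\tfrac12\langle T^\flat,\phi^*(\mathcal L_Xg)\rangle-\langle T^\flat,A^{X^\perp}\rangle .
\]

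To prove this I would differentiate: $\nabla_Y(T(X^T))=(\nabla_YT)(X^T)+T(\nabla_YX^T)$, and take the trace over $Y$. The trace of the first term is $(\mathrm{div}\,T)(X^T)$. For the second, regard $S:Y\mapsto\nabla_YX^T$ as a $(1,1)$ tensor on $\Sigma$; then $\mathrm{tr}(T\circ S)=\langle T^\flat,S^\flat\rangle$, and since $T^\flat$ is symmetric, only the symmetric part of $S^\flat(Y,Z)=\langle\nabla_YX^T,Z\rangle$ contributes. That symmetric part is exactly $\tfrac12\,\mathcal L_{X^T}\langle\cdot,\cdot\rangle$, the Lie derivative of the induced metric along the tangential field $X^T$ (a standard consequence of metric compatibility), while the antisymmetric part, being $\tfrac12\,dX^\flat$, dies when paired against a symmetric tensor --- this is precisely where the hypothesis that $T$ is symmetric enters. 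Hence so far $\mathrm{div}(T(X^T))=(\mathrm{div}\,T)(X^T)+\tfrac12\langle T^\flat,\mathcal L_{X^T}\langle\cdot,\cdot\rangle\rangle$.

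The remaining step, and the only one using the immersion essentially, is to trade the intrinsic quantity $\mathcal L_{X^T}\langle\cdot,\cdot\rangle$ for the pulled-back ambient $\phi^*(\mathcal L_Xg)$. Writing $X=X^T+X^\perp$ along $\Sigma$, for tangent $Y,Z$ one has $g(\overline\nabla_YX,Z)=\langle\nabla_YX^T,Z\rangle+\langle A^{X^\perp}(Y),Z\rangle$: the first term is the tangential part of $\overline\nabla_YX^T$ by the Gauss formula, and the second comes from the Weingarten formula $(\overline\nabla_YX^\perp)^T=A^{X^\perp}(Y)$, which follows from the paper's convention $A(Y,Z)=-(\overline\nabla_YZ)^\perp$. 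Symmetrizing in $Y$ and $Z$ and using that $A^{X^\perp}$ is self-adjoint gives $\phi^*(\mathcal L_Xg)=\mathcal L_{X^T}\langle\cdot,\cdot\rangle+2(A^{X^\perp})^\flat$. Substituting this back, and combining with the Leibniz step, yields the stated formula.

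I expect the argument to be short; the only things requiring care are the sign in the Weingarten/second-fundamental-form term (it must be consistent with the paper's convention for $A$) and, in the semi-Riemannian case, carrying the frame signs $\epsilon_i$ correctly --- both sidestepped by the invariant formulation above. The one genuinely substantive ingredient is the Gauss--Weingarten identity $\phi^*(\mathcal L_Xg)=\mathcal L_{X^T}\langle\cdot,\cdot\rangle+2(A^{X^\perp})^\flat$, which is exactly what manufactures the $A^{X^\perp}$ term in the conclusion.
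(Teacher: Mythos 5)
Your argument is correct and is essentially the coordinate-free version of the paper's own frame computation: both expand $\mathrm{div}(fT(X^T))$ by the Leibniz rule, use the symmetry of $T$ to move it onto $\nabla f$ and to kill the antisymmetric part of $Y\mapsto\overline\nabla_YX$, and extract the $A^{X^\perp}$ term from the Gauss--Weingarten decomposition (the paper gets it from $X\cdot\overline\nabla_{e_j}e_i=-X\cdot A_{ji}$ in a frame with $\nabla_{e_i}e_j(p)=0$, you get the equivalent identity $\phi^*(\mathcal L_Xg)=\mathcal L_{X^T}\langle\cdot,\cdot\rangle+2(A^{X^\perp})^\flat$). No gaps; the signs match the paper's convention $A(Y,Z)=-(\overline\nabla_YZ)^\perp$.
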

\begin{proof}
  Let $Y$ be the vector field defined by $Y=f T(X^T)$. Locally, let $\{e_i\}_{i=1}^m$ be a local orthonormal frame on $\Sigma$ such that $\langle e_i , e_j\rangle= \mu_i \delta_{ij}$, $\mu=\pm 1$. Then  $Y= \sum_{j=1}^m Y^j e_j$, where
  $Y^j= f \sum_{i=1}^m \mu_i T_{i}^j X\cdot e_i$, and that $ \mu_i T_i^j = \mu_j T_j^i$. We can assume that $\nabla _{e_i}e_j(p)=0$ for all $i,j$. We compute the divergence of $Y$ at $p$:
  \begin{equation*}
    \begin{split}
      &\mathrm{div}(Y)\\
      =& \langle \nabla f, T(X^T)\rangle+ f(\mathrm{div}\;T)(X^T)+ f\sum_{i,j=1}^m \mu_iT_{i}^j (\overline \nabla _{e_j}X\cdot e_i+X\cdot\overline \nabla _{e_j}e_i)\\
      =& \langle T(\nabla f), X^T\rangle+ f(\mathrm{div}\;T)(X^T)+ \frac{1}{2}f\sum_{i,j=1}^m \mu_i T_{i}^j (\overline \nabla _{e_j}X\cdot e_i+ \overline \nabla _{e_i}X\cdot e_j)\\
      &-f\sum_{i,j=1}^m \mu_i T_i^j (X\cdot A_{ji})\\
      =& \langle T(\nabla f), X^T\rangle+ f(\mathrm{div}\;T)(X^T)+ \frac{1}{2}f\langle T^\flat, \phi^* (\mathcal{L}_Xg )\rangle
      -f\langle T^\flat, A^{X^\perp}\rangle.
    \end{split}
  \end{equation*}
\end{proof}

To proceed, let us recall that a vector field $X$ on $M$ is said to be a conformal (Killing) vector field if it satisfies
\begin{equation}\label{eq: conf}
  \mathcal{L}_X g = 2 \alpha g
\end{equation}
for some function $\alpha$ on $M$, and in this case, it is easy to see that $\alpha= \frac{1}{n}\overline {\mathrm{div}}(X)$. Here $\overline {\mathrm{div}}$ is the divergence on $M$. More generally, for an immersion $\phi$ of $\Sigma$ into $(M,g)$, a vector field $X\in \Gamma (\phi^*( TM)) $ is conformal along $\phi$ if $\overline \nabla _YX\cdot Z+ \overline \nabla _ZX\cdot Y =2\alpha \langle Y, Z\rangle$ for any tangential vector fields $Y, Z\in \Gamma(T\Sigma)$.

We now state and prove our first main result.
\begin{theorem}\label{thm: main}
Suppose $M$ has constant curvature.
 Assume
$X\in \Gamma(\phi^*(TM))$ is a conformal vector field along $\Sigma$ with $\alpha$ given by \eqref{eq: conf}, and $f$ is a smooth function on $\Sigma$.
\begin{enumerate}
  \item
  If $0\le k\le m-1$ is even, then
  \begin{equation}\label{eq: even}
    \int_\Sigma \alpha f \sigma_k =\int_\Sigma f\sigma_{k+1}\cdot X- \frac{1}{(m-k){m\choose k}}\int_\Sigma \langle T_k(\nabla f), X^T\rangle .
  \end{equation}
  \item
  If  $m=n-1$ (i.e. hypersurface), then
  \begin{equation}\label{eq: hypersurf}
    \int_\Sigma \alpha f \sigma_k =\int_\Sigma f\sigma_{k+1}\nu\cdot X- \frac{1}{(n-1-k){{n-1}\choose k}}\int_\Sigma \langle T_k(\nabla f), X^T\rangle .
  \end{equation}
   Here $\sigma_k$ and $ \sigma_{k+1}$ are scalars, $T_k$ is understood to be an ordinary $2$-tensor, and
      $\nu$ is a unit normal vector field.  If $k=1$, we can assume $M$ is Einstein instead. If $k=0$, we can remove any assumption on $M$.
\end{enumerate}
\end{theorem}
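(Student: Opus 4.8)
The plan is to obtain both identities directly from Proposition \ref{prop: main} by taking $T = T_k$ and integrating over the closed manifold $\Sigma$, so that the divergence term drops out; everything then reduces to recognizing the three remaining integrands via Lemma \ref{lem: identities}.

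First I would apply Proposition \ref{prop: main} with $T = T_k$. For part (1) this requires $k$ even, so that $T_k$ is a genuine (scalar-valued) symmetric $(1,1)$ tensor; for part (2) one first uses the unit normal $\nu$ to regard $T_k$, $H_k$ and $H_{k+1}$ as scalars, after which the same argument applies for all $0 \le k \le n-2$. Integrating the resulting identity over the closed $\Sigma$ and using the divergence theorem to kill $\int_\Sigma \mathrm{div}(f T_k(X^T))$ gives
\[
0 = \int_\Sigma \langle T_k(\nabla f), X^T\rangle + \int_\Sigma f\,(\mathrm{div}\,T_k)(X^T) + \tfrac12\int_\Sigma f\,\langle T_k^\flat, \phi^*(\mathcal{L}_X g)\rangle - \int_\Sigma f\,\langle T_k^\flat, A^{X^\perp}\rangle .
\]
The second term vanishes by Lemma \ref{lem: identities}\eqref{item: 2}: $\mathrm{div}(T_k)=0$ when $M$ has constant curvature, and the same statement records the weakened hypotheses ($M$ Einstein for $k=1$, $m=n-1$; no assumption for $k=0$) — this is the only place the curvature hypothesis is used.

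Next I would evaluate the two surviving pointwise pairings in a local orthonormal frame $\{e_i\}$ with $\langle e_i,e_j\rangle=\mu_i\delta_{ij}$. Since $X$ is conformal along $\phi$, $(\phi^*(\mathcal{L}_X g))(e_i,e_j) = \overline\nabla_{e_i}X\cdot e_j + \overline\nabla_{e_j}X\cdot e_i = 2\alpha\langle e_i,e_j\rangle$, so $\phi^*(\mathcal{L}_X g) = 2\alpha\, g|_\Sigma$ and therefore $\langle T_k^\flat, \phi^*(\mathcal{L}_X g)\rangle = 2\alpha\,\mathrm{tr}(T_k) = 2\alpha (m-k)\binom{m}{k}\sigma_k$ by Lemma \ref{lem: identities}\eqref{item: 1}. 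For the last term, since each $A_{ij}$ is normal-valued we have $A_{ij}\cdot X = A_{ij}\cdot X^\perp$, so $\langle T_k^\flat, A^{X^\perp}\rangle = \sum_{i,j}\mu_i (T_k)_i^j\,(A_{ij}\cdot X) = \left(\sum_{i,j}\mu_i (T_k)_i^j A_{ij}\right)\cdot X = (k+1)H_{k+1}\cdot X$ by Lemma \ref{lem: identities}\eqref{item: 5} (in the hypersurface case this reads $(k+1)H_{k+1}\,\nu\cdot X$). Substituting $H_k = \binom{m}{k}\sigma_k$, $H_{k+1}=\binom{m}{k+1}\sigma_{k+1}$ and using the elementary identity $(k+1)\binom{m}{k+1} = (m-k)\binom{m}{k}$, the two remaining integrals carry the common factor $(m-k)\binom{m}{k}$; dividing by it and rearranging gives \eqref{eq: even}, and likewise \eqref{eq: hypersurf} with $m = n-1$.

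I do not expect a real obstacle — the text itself calls this ``an almost immediate consequence'' of Proposition \ref{prop: main}. The only delicate points are clerical: keeping track of the $\mu_i=\pm1$ signs and the metric contractions in the semi-Riemannian pairings (in particular verifying $\langle T_k^\flat, g|_\Sigma\rangle = \mathrm{tr}(T_k)$ and that $A^{X^\perp}$ has components $A_{ij}\cdot X$), and, in part (2), checking that the reduction of the vector-valued quantities to scalars via $\nu$ is compatible with the chosen orientation so that $H_{k+1}\cdot X$ becomes $\sigma_{k+1}\,\nu\cdot X$ with the stated sign.
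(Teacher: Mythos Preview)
Your proposal is correct and follows exactly the approach of the paper: apply Proposition \ref{prop: main} with $T=T_k$, integrate and use the divergence theorem, then invoke the three parts of Lemma \ref{lem: identities} together with $(k+1)\binom{m}{k+1}=(m-k)\binom{m}{k}$. The paper's own proof is a single sentence to this effect; you have simply spelled out the bookkeeping it leaves implicit.
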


\begin{proof}
  Recall that $\sigma_k = \frac{H_k}{{m\choose k}}$. The result follows by applying Proposition \ref{prop: main} to $T=T_k$, and using Lemma \ref{lem: identities} and the divergence theorem.
\end{proof}
In general, it does not make sense to talk about $\int_\Sigma \sigma_k$ if $k$ is odd. Even when the normal bundle $NM$ is parallel so that $\int_\Sigma \sigma_k$ makes sense, our approach does not seem to produce a result similar to Theorem \ref{thm: main}, as we cannot produce the term $\sum_{i,j=1}^m(T_k)_i^j \cdot A_{ij}$ and apply Lemma \ref{lem: identities}.

Instead, we now take a different approach to derive a formula similar to \eqref{eq: even} for all (and in particular, odd) $k$, which is due to Str{\"u}bing \cite{strubing1984integral}.
Similar to Section \ref{sec: prelim}, for a family of normal vector fields $\nu_1, \nu_2, \cdots$ (not necessarily distinct), we define
$$H_k (\nu_1, \cdots, \nu_k)=   \frac 1{k!}\sum_{\substack{i_1,\cdots, i_k\\
j_1, \cdots, j_k}} \epsilon_{j_1\cdots j_k}^{i_1\cdots i_k}(A^{\nu_1})_{i_1}^{j_1}  \cdots(A^{\nu_k})_{i_k}^{j_k},$$
$$\sigma_k (\nu_1, \cdots, \nu_k)=\frac{H_k(\nu_1, \cdots, \nu_k)}{{m\choose k}},$$
and
$$ {(T_k(\nu_1, \cdots, \nu_k))}_j^{\,i}= \frac 1 {k!}
\sum_{\substack{i_1,\cdots, i_k\\ j_1, \cdots, j_k}}
\epsilon^{i  i_1 \ldots  i_k}_{j  j_1 \ldots  j_k}
 (A^{\nu_1})_{i_1}^{j_1}  \cdots  (A^{\nu_k})_{i_k}^{j_k}.
$$
Similar to Lemma \ref{lem: identities}, we have
\begin{lemma}\label{lem: id2}
  For $k\ge 1$, we have ($\mathrm{tr}$ denotes the trace on $\Sigma$):
  \begin{enumerate}
    \item\label{item: 1'}
    $\mathrm{tr}(T_k(\nu_1, \cdots, \nu_k))= (m-k)H_k(\nu_1, \cdots, \nu_k)$.
    \item\label{item: 2'}
    If $M$ has constant curvature, and $\nu_1,\cdots,\nu_k$ are parallel in the normal bundle, then
    $\mathrm{div}(T_k(\nu_1, \cdots, \nu_k))=0$. i.e.
    $\sum_{i=1}^m\nabla _{e_i}(T_{k}(\nu_1, \cdots, \nu_k))^i_{j}=0.$
    \item\label{item: 5'}
    $\sum_{i,j=1}^m (T_k(\nu_1, \cdots, \nu_k))_i^j (A^{\nu_{k+1}})_j^i=(k+1) H_{k+1}(\nu_1, \cdots, \nu_{k+1}). $
  \end{enumerate}
\end{lemma}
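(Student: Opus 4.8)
The plan is to treat all three parts as algebraic identities for the generalized Kronecker symbol $\epsilon^{i_1\cdots i_k}_{j_1\cdots j_k}$, mirroring the proof of Lemma \ref{lem: identities} and the references cited there (\cite{grosjean2002upper}, \cite{reilly1973variational}, \cite{kwong2012inequality}). I work throughout in a local orthonormal frame $\{e_i\}_{i=1}^m$ on $\Sigma$ with $\langle e_i,e_j\rangle=\mu_i\delta_{ij}$, $\mu_i=\pm1$; since each of \eqref{item: 1'}--\eqref{item: 5'} is tensorial it suffices to verify it in such a frame, and the signs $\mu_i$ are carried along exactly as in the proof of Proposition \ref{prop: main}, so I suppress them below. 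Parts \eqref{item: 1'} and \eqref{item: 5'} are pure contraction identities; part \eqref{item: 2'} is where the hypotheses on $M$ and on $\nu_1,\dots,\nu_k$ enter, and it is the main point.

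For \eqref{item: 1'} I would set the free upper and lower indices of $T_k(\nu_1,\dots,\nu_k)$ equal and sum. The only input is the contraction rule $\sum_{i=1}^m\epsilon^{i\,i_1\cdots i_k}_{i\,j_1\cdots j_k}=(m-k)\epsilon^{i_1\cdots i_k}_{j_1\cdots j_k}$: a nonzero term on the left forces $i\notin\{i_1,\dots,i_k\}$, there are $m-k$ such $i$, and prepending a common index to both rows does not change the sign of the permutation. Substituting this into the definition of $(T_k(\nu_1,\dots,\nu_k))^i_i$ gives $(m-k)H_k(\nu_1,\dots,\nu_k)$ at once.

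For \eqref{item: 5'} I would expand $\sum_{i,j}(T_k(\nu_1,\dots,\nu_k))_i^{\,j}(A^{\nu_{k+1}})_j^{\,i}$ from the definitions and relabel the two contracted free indices as a new pair $(i_{k+1},j_{k+1})$; cycling this pair to the end in $\epsilon^{i_{k+1}i_1\cdots i_k}_{j_{k+1}j_1\cdots j_k}$ costs $k$ transpositions in each row, hence no sign, so the sum becomes $\tfrac1{k!}\sum\epsilon^{i_1\cdots i_{k+1}}_{j_1\cdots j_{k+1}}(A^{\nu_1})_{i_1}^{j_1}\cdots(A^{\nu_{k+1}})_{i_{k+1}}^{j_{k+1}}=\tfrac{(k+1)!}{k!}H_{k+1}(\nu_1,\dots,\nu_{k+1})=(k+1)H_{k+1}(\nu_1,\dots,\nu_{k+1})$. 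I would also note that the total antisymmetry of $\epsilon$ in its upper indices and in its lower indices makes $H_{k+1}(\nu_1,\dots,\nu_{k+1})$ invariant under permutations of the $\nu_a$, so the right-hand side is unambiguous.

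The main obstacle is \eqref{item: 2'}. The crucial observation is that, when $M$ has constant curvature and $\nu_a$ is parallel in the normal bundle, the intrinsic covariant derivative $\nabla A^{\nu_a}$ of the $(0,2)$-tensor $A^{\nu_a}$ on $\Sigma$ is totally symmetric: writing $\overline\nabla A$ for the Van der Waerden--Bortolotti derivative, $(\overline\nabla_X A)(Y,Z)=\nabla^\perp_X(A(Y,Z))-A(\nabla_X Y,Z)-A(Y,\nabla_X Z)$ with $\nabla^\perp$ the normal connection, one has $\langle(\overline\nabla_X A)(Y,Z),\nu_a\rangle=(\nabla_X A^{\nu_a})(Y,Z)$ because $\nabla^\perp\nu_a=0$; the Codazzi equation gives $(\overline\nabla_X A)(Y,Z)-(\overline\nabla_Y A)(X,Z)=(\overline R(X,Y)Z)^\perp$, and $(\overline R(X,Y)Z)^\perp=0$ for $M$ of constant curvature, so $\nabla A^{\nu_a}$ is symmetric in the derivative slot and the first tensor slot, hence (using symmetry of $A^{\nu_a}$) in all three slots. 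Now I would compute $(\mathrm{div}\,T_k(\nu_1,\dots,\nu_k))_j=\sum_i\nabla_{e_i}(T_k)^i_j$ at a point where $\nabla_{e_a}e_b=0$: the product rule produces $k$ terms, the $a$-th of which contracts $\epsilon^{i\,i_1\cdots i_k}_{j\,j_1\cdots j_k}$ with $\nabla_{e_i}(A^{\nu_a})_{i_a}^{\,j_a}$, the remaining factors untouched; holding all indices but $i$ and $i_a$ fixed, the subsum over $i,i_a$ vanishes since $\epsilon$ is antisymmetric under $i\leftrightarrow i_a$ while $\nabla_{e_i}A^{\nu_a}_{i_aj_a}$ is symmetric under $i\leftrightarrow i_a$ by the symmetry just established. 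Hence every term vanishes and $\mathrm{div}\,T_k(\nu_1,\dots,\nu_k)=0$. The points needing care in the full write-up are the bookkeeping of index positions (and the $\mu_i$ factors) in the defining formula, and verifying that the parallelism hypothesis is precisely what converts the ambient derivative of $A$ into the intrinsic derivative of $A^{\nu_a}$.
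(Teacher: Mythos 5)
Your proposal is correct and follows the same route the paper takes: the paper's proof simply defers to the standard codimension-one argument (citing \cite{marques1997stability}, Lemma 2.1) and singles out the one new ingredient for \eqref{item: 2'}, namely the Codazzi symmetry $\nabla_{e_i}(A^{\nu})_j^k=\nabla_{e_j}(A^{\nu})_i^k$ for a parallel normal $\nu$ in a constant-curvature ambient, which is exactly the fact you establish and exploit against the antisymmetry of $\epsilon$. You have merely written out in full the contraction identities and the symmetry-versus-antisymmetry cancellation that the paper leaves to the references.
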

\begin{proof}
  The proof is exactly the same as in the codimension one case of Lemma \ref{lem: identities}, see e.g. \cite{marques1997stability} Lemma 2.1, except that in \eqref{item: 2'}, we need the fact that $\nabla _{e_i}(A^\nu)_j^k= \nabla _{e_j}(A^\nu)_i^k$ if $\nu$ is parallel.
\end{proof}
By applying Proposition \ref{prop: main} to $T_k(\nu_1, \cdots, \nu_k)$ and using Lemma \ref{lem: id2}, we obtain the following
\begin{theorem}\label{thm: main'}
Suppose $M$  has constant curvature.
Assume
$X\in \Gamma(\phi^*(TM))$ is a conformal vector field along $\Sigma$ with $\alpha$ given by \eqref{eq: conf}, $f$ is a smooth function on $\Sigma$ and $\nu_1, \cdots, \nu_k$ are (not necessarily distinct) normal fields to $\Sigma$ which are parallel in the normal bundle. Then
  \begin{equation}
  \begin{split}
    \int_\Sigma \alpha f \sigma_k(\nu_1, \cdots, \nu_k) =&
    \int_\Sigma f\sigma_{k+1}(\nu_1, \cdots, \nu_k, X^\perp)\\
    &- \frac{1}{(m-k){m\choose k}}\int_\Sigma \langle T_k(\nu_1,\cdots, \nu_k)(\nabla f), X^T\rangle .
  \end{split}
  \end{equation}
\end{theorem}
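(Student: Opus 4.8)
The plan is to follow the proof of Theorem~\ref{thm: main} almost verbatim, feeding the symmetric $(1,1)$-tensor $T:=T_k(\nu_1,\dots,\nu_k)$ into Proposition~\ref{prop: main}, integrating over the closed manifold $\Sigma$, and simplifying the three resulting terms with the help of Lemma~\ref{lem: id2} and the conformality of $X$. Before starting one should observe that $T_k(\nu_1,\dots,\nu_k)$ is indeed symmetric with respect to $\langle\cdot,\cdot\rangle$ (for the same reason the codimension-one Newton tensors are), so that Proposition~\ref{prop: main} applies; and although $X$ is only a section of $\phi^*(TM)$ rather than a vector field on an open subset of $M$, every quantity appearing in Proposition~\ref{prop: main}---namely $\overline\nabla_{e_j}X$ for $e_j$ tangent to $\Sigma$, and hence $\phi^*(\mathcal L_Xg)$ and $A^{X^\perp}$---depends only on the values of $X$ along $\Sigma$, so the proposition and its proof go through unchanged.

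Applying Proposition~\ref{prop: main} with $T=T_k(\nu_1,\dots,\nu_k)$ and integrating over $\Sigma$, the left-hand side $\int_\Sigma\mathrm{div}(fT(X^T))$ vanishes by the divergence theorem, so that
$$0=\int_\Sigma \langle T(\nabla f),X^T\rangle+\int_\Sigma f(\mathrm{div}\,T)(X^T)+\frac12\int_\Sigma f\langle T^\flat,\phi^*(\mathcal L_Xg)\rangle-\int_\Sigma f\langle T^\flat,A^{X^\perp}\rangle.$$
The second integral drops out: since $M$ has constant curvature and $\nu_1,\dots,\nu_k$ are parallel in the normal bundle, Lemma~\ref{lem: id2}\eqref{item: 2'} gives $\mathrm{div}\,T=0$. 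In the third integral, the hypothesis that $X$ is conformal along $\Sigma$ means precisely that $\phi^*(\mathcal L_Xg)=2\alpha\langle\cdot,\cdot\rangle$ on $T\Sigma$, whence $\frac12\langle T^\flat,\phi^*(\mathcal L_Xg)\rangle=\alpha\,\mathrm{tr}(T)=\alpha(m-k){m\choose k}\sigma_k(\nu_1,\dots,\nu_k)$ by Lemma~\ref{lem: id2}\eqref{item: 1'}. In the fourth integral, writing $\langle T^\flat,A^{X^\perp}\rangle=\sum_{i,j}T_i^j(A^{X^\perp})_j^i$ and applying Lemma~\ref{lem: id2}\eqref{item: 5'} with $\nu_{k+1}=X^\perp$ (this part of the lemma requires no parallelism of $\nu_{k+1}$) gives $\langle T^\flat,A^{X^\perp}\rangle=(k+1)H_{k+1}(\nu_1,\dots,\nu_k,X^\perp)=(k+1){m\choose k+1}\sigma_{k+1}(\nu_1,\dots,\nu_k,X^\perp)$.

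Substituting these into the integrated identity and using the elementary identity $(k+1){m\choose k+1}=(m-k){m\choose k}$, one divides through by $(m-k){m\choose k}$ and rearranges to obtain the asserted formula. I do not anticipate a genuine obstacle here: as with Theorem~\ref{thm: main}, the statement is a short bookkeeping consequence of Proposition~\ref{prop: main} and Lemma~\ref{lem: id2}. The only points deserving attention are that the vanishing $\mathrm{div}\,T=0$ genuinely uses \emph{both} the constant-curvature hypothesis on $M$ and the parallelism of the $\nu_i$---which is exactly why Theorem~\ref{thm: main'} is phrased under those assumptions---and that one must keep careful track of the normal-valued objects $H_{k+1}(\cdots,X^\perp)$, $\sigma_{k+1}(\cdots,X^\perp)$ and $T_k(\nu_1,\dots,\nu_k)$ when invoking Lemma~\ref{lem: id2}.
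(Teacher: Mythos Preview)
Your proposal is correct and follows exactly the approach the paper takes: apply Proposition~\ref{prop: main} to $T=T_k(\nu_1,\dots,\nu_k)$, integrate, and simplify using the three parts of Lemma~\ref{lem: id2} together with the conformality of $X$. One small slip in your closing remark: the objects $H_{k+1}(\nu_1,\dots,\nu_k,X^\perp)$, $\sigma_{k+1}(\nu_1,\dots,\nu_k,X^\perp)$ and $T_k(\nu_1,\dots,\nu_k)$ are \emph{scalar}-valued by construction (they are built from the scalar forms $A^{\nu_l}$), not normal-valued---but this does not affect the argument.
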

\begin{remark}
  If $m=n-1$,  then Theorem \ref{thm: main'} is reduced to \eqref{eq: hypersurf} in Theorem \ref{thm: main}.
\end{remark}

\section{Examples and applications}\label{sec: cor}
\subsection{Explicit formulas and inequalities in various spaces}
By substituting different $f$, $M$ and $X$ in Theorem \ref{thm: main}, we can obtain several corollaries.

First, a definition: we define $\mathbb R^{p,q}$ to be the vector space $\mathbb R^{p+q}$ equipped with the semi-Riemannian metric
  $dx_1^2+\cdots + dx_p^2-dx_{p+1}^2-\cdots - dx_{p+q}^2$.

\begin{corollary}\label{cor: R^n}
Suppose $\Sigma$ is a closed oriented $m$-dimensional semi-Riemannian manifold isometrically immersed in $\mathbb{R}^{p,q}$, where $p+q=n$, and $f$ is a smooth function on $\Sigma$. Let $X$ be the position vector.
\begin{enumerate}
  \item
  If $0\le k\le m-1$ is even, then
  \begin{equation*}
    \int_\Sigma   f \sigma_k =\int_\Sigma f\sigma_{k+1}\cdot X- \frac{1}{(m-k){m\choose k}}\int_\Sigma \langle T_k(\nabla f), X^T\rangle .
  \end{equation*}
  \item
  If  $m=n-1$ (i.e. hypersurface), then
  \begin{equation*}
    \int_\Sigma   f \sigma_k =\int_\Sigma f\sigma_{k+1}\nu\cdot X- \frac{1}{(n-1-k){{n-1}\choose k}}\int_\Sigma \langle T_k(\nabla f), X^T\rangle .
  \end{equation*}
   Here $\sigma_k$ and $ \sigma_{k+1}$ are scalars, $T_k$ is understood to be an ordinary $2$-tensor, and
      $\nu$ is a unit normal vector field.
      \item
       If there exists (not necessarily distinct) normal fields $\nu_1, \cdots, \nu_k$ to $\Sigma$ which are parallel in the normal bundle. Then
  \begin{equation*}
  \begin{split}
    &\int_\Sigma f \sigma_k(\nu_1, \cdots, \nu_k)\\
    =&\int_\Sigma f\sigma_{k+1}(\nu_1, \cdots, \nu_k, X^\perp)
    - \frac{1}{(m-k){m\choose k}}\int_\Sigma \langle T_k(\nu_1,\cdots, \nu_k)(\nabla f), X^T\rangle .
  \end{split}
  \end{equation*}
\end{enumerate}
\end{corollary}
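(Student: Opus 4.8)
The plan is to recognize $\mathbb{R}^{p,q}$ as a flat space (hence of constant curvature) and the position vector $X$ as a conformal vector field with constant conformal factor $\alpha\equiv 1$, and then to apply Theorems \ref{thm: main} and \ref{thm: main'} verbatim.

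First I would record that $\mathbb{R}^{p,q}$ has constant curvature equal to zero, so all the curvature hypotheses appearing in Theorem \ref{thm: main} and Theorem \ref{thm: main'} (and in particular the Einstein assumption needed for the $k=1$ hypersurface case, and the absence of any assumption for $k=0$) are automatically met. Next I would verify that the position vector is conformal with $\alpha\equiv 1$: writing $X=\sum_{i=1}^{n} x_i\,\partial_{x_i}$ and using that the Levi-Civita connection $\overline{\nabla}$ of the flat metric $dx_1^2+\cdots+dx_p^2-dx_{p+1}^2-\cdots-dx_{p+q}^2$ is the standard coordinate (flat) connection, one computes $\overline{\nabla}_{\partial_{x_j}}X=\sum_i(\partial_{x_j}x_i)\partial_{x_i}=\partial_{x_j}$, so that $\overline{\nabla}_Y X=Y$ for every vector field $Y$ on $\mathbb{R}^{p,q}$. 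Consequently, for tangential $Y,Z\in\Gamma(T\Sigma)$,
$$\overline{\nabla}_Y X\cdot Z+\overline{\nabla}_Z X\cdot Y=Y\cdot Z+Z\cdot Y=2\langle Y,Z\rangle,$$
which is precisely the condition that $X$ be conformal along $\phi$ with $\alpha\equiv 1$; equivalently $\mathcal{L}_X g=2g$ on $M$, so $\alpha\equiv 1$ in \eqref{eq: conf}.

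Finally, I would substitute $\alpha\equiv 1$ into \eqref{eq: even} for part (1), into \eqref{eq: hypersurf} for part (2) (noting that $m=n-1$ forces $0\le k\le n-2$, and that the flatness of $\mathbb{R}^{p,q}$ covers the $k=0,1$ exceptional cases), and into the identity of Theorem \ref{thm: main'} for part (3), where the normal fields $\nu_1,\dots,\nu_k$ are assumed parallel in the normal bundle exactly as there. Each substitution reproduces the corresponding displayed formula word for word. There is no serious obstacle in this corollary: it is a direct specialization, and the only point deserving even minor attention is the identity $\overline{\nabla}_Y X=Y$ in indefinite signature, which is valid for the same reason as in the Riemannian case, namely that all Christoffel symbols vanish in the standard affine coordinates.
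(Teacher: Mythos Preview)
Your proposal is correct and follows exactly the paper's own approach: the paper's proof consists of one sentence, invoking Theorem \ref{thm: main} and Theorem \ref{thm: main'} together with the computation $\mathcal{L}_Xg(Y,Z)=\overline{\nabla}_Y X\cdot Z+\overline{\nabla}_Z X\cdot Y=Y\cdot Z+Z\cdot Y=2\langle Y,Z\rangle$, which is precisely the verification you carry out in slightly more detail.
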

\begin{proof}
  This is a direct consequence of Theorem \ref{thm: main}, Theorem \ref{thm: main'} and the fact that $\mathcal{L}_Xg(Y, Z)= \overline \nabla _Y X\cdot Z+\overline \nabla _Z X\cdot Y= Y\cdot Z+Z\cdot Y=2\langle Y,Z\rangle$.
\end{proof}
\begin{corollary}\label{cor: Rpq}
  Suppose $\Sigma$ is
  immersed in $\mathbb{R}^{p,q}$, where $p+q=n$. Then
  \begin{enumerate}
    \item
    For all odd $1\le k \le m$, we have
    $\int_\Sigma \sigma_k=0.$
    Here we regard $\sigma_k$ as a vector valued function.
    \item
    If $\Sigma$ is a hypersurface, then for all $0\le k\le m=n-1$, we have
    $\int_\Sigma \sigma_k \nu=0.$ Here we regard $\sigma_k$ as a scalar.
  \end{enumerate}

\end{corollary}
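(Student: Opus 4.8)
The plan is to specialize the formulas of Section~\ref{sec: main} (equivalently, Corollary~\ref{cor: R^n}) to $f\equiv 1$ and $X$ a constant vector field, and then let the constant range over all of $\mathbb{R}^{p,q}$. As a preliminary remark I would note that every constant vector field $a$ on $\mathbb{R}^{p,q}$ is parallel for the flat connection, hence is conformal along any isometric immersion with conformal factor $\alpha=0$, and that $\mathbb{R}^{p,q}$ is flat, so all hypotheses of Theorem~\ref{thm: main} are satisfied with such an $X$.

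For part (1), fix an odd $k$ with $1\le k\le m$. Then $k-1$ is even and $0\le k-1\le m-1$, so I would apply \eqref{eq: even} with $k$ replaced by $k-1$, taking $f\equiv 1$ and $X=a$ an arbitrary constant vector. Since $\nabla f=0$ the last term drops out, and since $\alpha=0$ the left side vanishes, leaving $\int_\Sigma \sigma_k\cdot a=0$. Interpreting $\int_\Sigma\sigma_k\cdot a$ as $(\int_\Sigma\sigma_k)\cdot a$, where $\int_\Sigma\sigma_k\in\mathbb{R}^{p,q}$ is the (componentwise) integral of the normal-valued function $\sigma_k$, and using that the ambient metric is nondegenerate, I conclude $\int_\Sigma\sigma_k=0$.

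For part (2), suppose $\Sigma$ is a hypersurface. For $1\le k\le m=n-1$ I would instead apply the hypersurface identity \eqref{eq: hypersurf} with $k$ replaced by $k-1$ (which is valid for every such index, not only even ones) with $f\equiv 1$ and $X=a$ constant; the same cancellation yields $\int_\Sigma \sigma_k\,\langle\nu,a\rangle=0$ for every $a$, hence $\int_\Sigma \sigma_k\nu=0$. The case $k=0$, i.e.\ $\int_\Sigma\nu=0$, cannot be reached this way since there is no ``$\sigma_{-1}$''; for it I would invoke the classical fact that the total unit normal of a closed hypersurface vanishes: for constant $a$ the $m$-form $\iota_a(dx^1\wedge\cdots\wedge dx^n)$ on $\mathbb{R}^n$ is closed, hence exact because $H^{m}_{\mathrm{dR}}(\mathbb{R}^n)=0$, and its pullback to $\Sigma$ equals a locally constant multiple $\pm1$ of $\langle\nu,a\rangle\,dS$, so Stokes' theorem gives $\int_\Sigma\langle\nu,a\rangle\,dS=0$ for all $a$.

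The mathematics is light; the main thing to be careful about is bookkeeping of signs and of the scalar-versus-vector conventions for $\sigma_k$. In particular I would want to check that on a hypersurface the vector-valued $\sigma_k$ for odd $k$ is a locally constant multiple of $\sigma_k^{\mathrm{scalar}}\,\nu$, so that parts (1) and (2) are mutually consistent, and, in the $k=0$ argument, confirm that in the semi-Riemannian setting $\iota_a(dx^1\wedge\cdots\wedge dx^n)$ still pulls back to a locally constant multiple of $\langle\nu,a\rangle\,dS$ (this uses only that $\{\nu,e_1,\dots,e_m\}$ is orthonormal, so the relevant determinant has unit absolute value), so that the cohomological vanishing applies verbatim.
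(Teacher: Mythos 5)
Your proposal is correct and follows the paper's own route: take $f\equiv 1$ and $X$ a constant (hence Killing, $\alpha=0$) vector field in Theorem~\ref{thm: main}, let it range over a basis of $\mathbb{R}^{p,q}$, and use nondegeneracy of the metric. You are in fact slightly more careful than the paper, which does not explicitly address the $k=0$ case of part (2) (there is no ``$\sigma_{-1}$'' to feed into the formula); your separate Stokes-theorem argument showing $\int_\Sigma\langle\nu,a\rangle\,dS=0$ for every constant $a$ correctly fills that small gap.
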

\begin{proof}
  This follows from Theorem \ref{thm: main} by putting $f=1$ and $X= E_i$, $i=1,\cdots, n$, where $\{E_i\}_{i=1}^n$ is the standard orthonormal basis of $\mathbb{R}^{p,q}$. As $E_i$ are Killing vector fields, we have $\alpha=0$ and the result follows. (Alternatively, this also follows from integrating the divergence of the vector field (more appropriately, an $n$-tuple of vector fields) $\sum_{i,j=1}^m(T_{k-1})^i_j X^je_i$ on $\Sigma$, where $X$ is the position vector and $X^j = \nabla _{e_j}X$ regarded as an $n$-tuple. For yet another proof, note that $\int_\Sigma \sigma_{k}\nu\cdot X=\int_\Sigma f \sigma_{k-1}$ is invariant under translation of $X$. )
\end{proof}

\begin{proposition}\label{prop: euc1}
  Suppose $\Sigma$ is
  a closed hypersurface immersed in $\mathbb{R}^n$. Let $X$ be the position vector, $r=|X|$, $0\le k\le n-2$ and $f$ be a smooth function on $\mathbb{R}$.
  Then we have
  $$\int_\Sigma f(r) \sigma_k=\int_\Sigma f(r)\sigma_{k+1}X\cdot \nu- \frac{1}{(m-k){m\choose k}}\int_\Sigma \frac{f'(r)}{r}\langle T_k(X^T), X^T\rangle$$
  and
  $$\int_\Sigma f(u) \sigma_k=\int_\Sigma u f(u)\sigma_{k+1}- \frac{1}{(m-k){m\choose k}}\int_\Sigma f'(u)\langle T_kA(X^T), X^T\rangle$$
  where $u=X\cdot \nu$.
\end{proposition}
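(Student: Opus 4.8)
The plan is to derive both formulas as direct specializations of Theorem~\ref{thm: main} (the hypersurface case \eqref{eq: hypersurf}) in $M = \mathbb{R}^n$, where the position vector $X$ is conformal with $\alpha = 1$. The only real work is to rewrite the two error terms $\langle T_k(\nabla f(r)), X^T\rangle$ and $\langle T_k(\nabla f(u)), X^T\rangle$ in the stated forms, so I would organize the argument around computing $\nabla f(r)$ and $\nabla f(u)$ on $\Sigma$.

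For the first identity, I would start from $r^2 = |X|^2 = X \cdot X$ and differentiate along a tangent vector $e_i$: since $\overline\nabla_{e_i} X = e_i$, one gets $e_i(r^2) = 2 X \cdot e_i$, hence $\nabla r = \frac{1}{r} X^T$ (as $X \cdot e_i = X^T \cdot e_i$ for $e_i$ tangent). Therefore $\nabla f(r) = f'(r)\nabla r = \frac{f'(r)}{r} X^T$, and substituting into \eqref{eq: hypersurf} with $\alpha = 1$ gives $\langle T_k(\nabla f(r)), X^T\rangle = \frac{f'(r)}{r}\langle T_k(X^T), X^T\rangle$, which is exactly the claimed first formula (here $m = n-1$ so the combinatorial prefactor matches). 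I should double-check the case $r = 0 \in \Sigma$ is either excluded or harmless, but for a closed hypersurface this is a measure-zero concern and $f'(r)/r$ extends smoothly when $f'(0) = 0$; since the statement just says "$f$ smooth on $\mathbb{R}$" I would note that $\nabla f(r)$ is globally smooth on $\Sigma$ regardless and the identity $\nabla f(r) = \frac{f'(r)}{r}X^T$ holds wherever $r \neq 0$, which suffices for integration.

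For the second identity, the key computation is $\nabla u$ where $u = X \cdot \nu$. Differentiating along $e_i$: $e_i(u) = \overline\nabla_{e_i} X \cdot \nu + X \cdot \overline\nabla_{e_i}\nu = e_i \cdot \nu + X \cdot \overline\nabla_{e_i}\nu = 0 + X \cdot \overline\nabla_{e_i}\nu$. Using the Weingarten relation $\overline\nabla_{e_i}\nu = -A(e_i) = \sum_j A_i^j e_j$ (with the sign convention of the paper, where $A(X,Y) = -(\overline\nabla_X Y)^\perp$; I would pin down the sign carefully so it matches \eqref{eq: T}), this gives $e_i(u) = \sum_j A_i^j (X \cdot e_j) = \sum_j A_i^j (X^T \cdot e_j) = \langle A(X^T), e_i\rangle$ up to sign, so $\nabla u = \pm A(X^T)$. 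Hence $\nabla f(u) = f'(u) A(X^T)$ (again up to a global sign), and $\langle T_k(\nabla f(u)), X^T\rangle = f'(u)\langle T_k A(X^T), X^T\rangle$. Combined with $\alpha = 1$ and the observation that $\sigma_{k+1}\nu \cdot X = \sigma_{k+1} u$, \eqref{eq: hypersurf} becomes $\int_\Sigma f(u)\sigma_k = \int_\Sigma u f(u) \sigma_{k+1} - \frac{1}{(m-k)\binom{m}{k}}\int_\Sigma f'(u)\langle T_k A(X^T), X^T\rangle$, as claimed.

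The main obstacle, such as it is, is purely bookkeeping of signs: the paper's second fundamental form carries a minus sign in its definition, the Newton tensor recursion \eqref{eq: T} fixes one convention for $A = A^\nu$, and I must verify that with these conventions the Weingarten map giving $\overline\nabla_{e_i}\nu$ produces $+\langle A(X^T), e_i\rangle$ rather than its negative, so that the final formula has no stray sign. A clean way to sidestep sign confusion is to verify the formula on the round sphere $\Sigma = \mathbb{S}^{n-1}$, where $u = r = 1$, $A = \mathrm{Id}$ (with outward normal), $T_k = \binom{n-2}{k}\mathrm{Id}$, and both sides reduce to elementary identities; any sign error would show up immediately. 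Everything else follows by substitution into already-proven results, so no further estimates or new geometric input are needed.
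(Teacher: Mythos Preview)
Your approach is correct and essentially identical to the paper's: both identities follow from \eqref{eq: hypersurf} with $\alpha=1$ once one computes $r\nabla r = X^T$ and $\nabla u = A(X^T)$. The only difference is in handling the possible singularity at $r=0$: the paper simply translates the origin off $\Sigma$ by an arbitrarily small amount, whereas your measure-zero remark is a bit loose (your claim that $\nabla f(r)$ is globally smooth on $\Sigma$ fails for, e.g., $f(r)=r$ if $O\in\Sigma$), so the translation trick is the cleaner fix.
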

\begin{proof}By an arbitrary small translation, we can assume $O\notin \Sigma$.
  The first equation follows from Theorem \ref{thm: main} and the observation that $r\nabla r=X^T$. The second equation follows by putting $f=f(u)$ noting that
  $\nabla (X\cdot \nu)= A(X^T)$.
\end{proof}
  The following corollary generalizes \cite{kwong2012} Theorem 3.2 (1) and also \cite{kwong1906new} Theorem 2.
\begin{corollary}\label{cor: Euc area}
  Suppose $\Sigma$ is a closed hypersurface immersed in $\mathbb{R}^n$ such that
 $\sigma_{k}>0$ for some $1\le k\le n-1$. Assume that
  $p\ge0$. Then we have
  $$ \int_\Sigma r^{p}\le \int_\Sigma \sigma_1 r^{p+1}\le \cdots \le \int_\Sigma \sigma_k r^{p+k}
  $$
  where $r=|X|$ and $X$ is the position vector. The equality occurs if and only if $\Sigma$ is a sphere centered at $O$. In particular, we have
  \begin{equation*}\label{eq: cor1}
    \mathrm{Area}(\Sigma)\le \int_\Sigma \sigma_1 r\le \cdots\le \int_\Sigma\sigma_{k} r^{k}
  \end{equation*}
  and
  \begin{equation}\label{eq: cor2}
    n\mathrm{Vol}(\Omega)\le \int_\Sigma r\le \int_\Sigma \sigma_1 r^2 \le \cdots \le \int_\Sigma \sigma_{k}r^{k+1}
  \end{equation}
  if $\Sigma$ is embedded.
  Here $\Omega$ is the region enclosed by $\Sigma$. The equality holds if and only if $\Sigma$ is a sphere centered at $O$.
\end{corollary}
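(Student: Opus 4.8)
The plan is to iterate Proposition \ref{prop: euc1} with the right choice of $f$, turning each Minkowski-type identity into an inequality via a pointwise algebraic comparison of the Newton-transformation terms. Concretely, I would first fix $1\le k\le n-1$ and $p\ge 0$, and for each $j$ with $1\le j\le k$ apply the first identity of Proposition \ref{prop: euc1} (with $k$ there replaced by $j-1$) to the function $f(r)=r^{p+j}$. Since $f'(r)/r = (p+j)r^{p+j-2}\ge 0$ and, by $k$-convexity (the inductive consequence of $\sigma_k>0$, which by the Gårding–Maclaurin theory forces $\sigma_0,\dots,\sigma_k>0$ and hence $T_0,\dots,T_{k-1}$ positive definite), the term $\langle T_{j-1}(X^T),X^T\rangle\ge 0$ pointwise. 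Dropping that nonnegative term from the identity gives
\begin{equation*}
  \int_\Sigma \sigma_{j-1} r^{p+j-1} \le \int_\Sigma \sigma_j r^{p+j} \cdot \frac{r\,X\cdot\nu}{r^2}\cdot r \;=\; \int_\Sigma \sigma_j r^{p+j-2}(X\cdot\nu)\, ,
\end{equation*}
wait — more cleanly, writing the identity as $\int_\Sigma r^{p+j}\sigma_{j-1} = \int_\Sigma r^{p+j}\sigma_j (X\cdot\nu) - (\text{nonneg})$, I then need the elementary estimate $X\cdot\nu \le |X| = r$ on an embedded (or suitably oriented) hypersurface to conclude $\int_\Sigma r^{p+j-1}\sigma_{j-1} \le \int_\Sigma r^{p+j}\sigma_j$. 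Chaining $j=1,\dots,k$ yields the first displayed chain; the ``$\mathrm{Area}(\Sigma)\le\cdots$'' chain is the case $p=0$, using $\sigma_0=1$.

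For the volume chain, the extra bottom inequality $n\,\mathrm{Vol}(\Omega)\le\int_\Sigma r$ comes from the divergence theorem: $\int_\Omega \overline{\mathrm{div}}(X) = \int_\Sigma X\cdot\nu$, and $\overline{\mathrm{div}}(X)=n$ in $\mathbb R^n$, so $n\,\mathrm{Vol}(\Omega)=\int_\Sigma X\cdot\nu \le \int_\Sigma r$. The remaining inequalities $\int_\Sigma r\le\int_\Sigma\sigma_1 r^2\le\cdots$ are exactly the first chain with $p=1$, after noting $\int_\Sigma r = \int_\Sigma \sigma_0 r$.

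The equality analysis is where the real work lies, and I expect it to be the main obstacle. Equality throughout forces two things simultaneously at every point: (i) $\langle T_{j-1}(X^T),X^T\rangle\equiv 0$, and since $T_{j-1}$ is positive definite (from $k$-convexity) this gives $X^T\equiv 0$, i.e. $X$ is everywhere normal; (ii) $X\cdot\nu\equiv |X|=r$, i.e. $\nu = X/|X|$. From $X^T\equiv 0$ one gets that $r^2=|X|^2$ has vanishing tangential gradient, so $r$ is constant on $\Sigma$; combined with $\nu=X/r$ this says $\Sigma$ lies on the sphere of radius $r$ about $O$, and connectedness/closedness upgrades this to $\Sigma$ being that whole sphere. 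The converse (spheres centered at $O$ give equality) is a direct substitution: on such a sphere $X^T=0$, $X\cdot\nu=r$ is constant, and all $\sigma_j$ are constant, so each identity in Proposition \ref{prop: euc1} collapses to an equality. One subtlety to handle carefully: the small-translation device in Proposition \ref{prop: euc1} assumed $O\notin\Sigma$, and to run the argument with $r^{p+j}$ for $p=0$, $j=1$ we are differentiating $r$, so I should either note $r>0$ on $\Sigma$ after the generic translation or observe the final inequalities are translation-stable limits; and the positivity of all intermediate $T_j$ must be justified via the standard fact that $\sigma_k>0$ on a connected closed hypersurface (with a point where all principal curvatures are positive, e.g. a point of the convex hull) implies $\sigma_1,\dots,\sigma_{k-1}>0$ and the $T_j$ are positive definite for $j\le k-1$ — this is the Gårding cone argument I would cite rather than reprove.
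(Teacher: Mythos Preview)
Your approach is exactly the paper's: apply Proposition~\ref{prop: euc1} with $f=r^{l}$, drop the nonnegative $\langle T_{j-1}(X^T),X^T\rangle$ term using positive definiteness of $T_{j-1}$ (cited from \cite{marques1997stability}), then use $X\cdot\nu\le r$ (which is just Cauchy--Schwarz and needs no embeddedness) together with $\sigma_j>0$; the volume bound and equality case are handled identically. The only slip is an off-by-one in your choice of test function: with $f(r)=r^{p+j}$ the identity reads $\int_\Sigma r^{p+j}\sigma_{j-1}\le\int_\Sigma r^{p+j+1}\sigma_j$, not the conclusion $\int_\Sigma r^{p+j-1}\sigma_{j-1}\le\int_\Sigma r^{p+j}\sigma_j$ you then write down --- take $f(r)=r^{p+j-1}$ instead and everything lines up.
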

\begin{proof}
  By \cite{marques1997stability} Proposition 3.2, if $\sigma_{k}>0$ on $\Sigma$, then $T_j$ is positive for $0\le j< k$. By applying Proposition \ref{prop: euc1} with $f=r^l$ and the Cauchy-Schwarz inequality, we can get the inequalities.  If the equality holds, then $X^T=0$ as $T_j>0$ for $0\le j< k$, but then $\nabla (|X|^2)=0$, which implies $\Sigma$ is a sphere centered at $O$. The converse is easy.
  The inequality \eqref{eq: cor2} follows from the fact that $n\mathrm{Vol}(\Omega)=\int_\Sigma X\cdot \nu\le \int_\Sigma r$.
\end{proof}

To state our next result, we first set up the notations.
Recall $\mathbb R^{p,q}=(\displaystyle \mathbb R^{p+q}, \sum_{i=1}^p dx_i^2-\sum_{i=p+1}^{p+q}dx_i^2)$. We use $\cdot$ to denote both the inner product on $\mathbb{R}^{p,q}$ and the semi-Riemannian metric on $M_{p,q}(\mu)$, as defined below.
  Let  $\mu=\pm1$ and $M_{p,q}(\mu)=\lbrace X\in \mathbb R^{p,q}: X\cdot X=\mu\rbrace$  be a pseudo-sphere in $\mathbb R^{p,q}$. It is easy to see that $M_{p,q}(\mu)$ is totally umbilic in $\mathbb R^{p,q}$ and in particular has constant curvature. Naturally, we can identify $T_X M_{p,q}(\mu)$ with a subspace in $\mathbb{R}^{p,q}$ and $V\in T_X M_{p,q}(\mu)$ if and only if $V\cdot X=0$.

  Let us recall that the classical Hsiung-Minkowski formulas \cite{hsiung1956some}: if $(M, g)$ is a space form and $\Sigma$ is a closed oriented hypersurface in $M$ with a unit normal vector field $\nu$. Suppose $M$ possesses a conformal vector field $Y$, i.e. the Lie derivative of $g$ satisfies $\mathcal L_Yg= 2 \alpha g$ for some function $\alpha$, then we have
  $$ \int_\Sigma \alpha\sigma_k= \int_\Sigma \sigma_{k+1}\nu\cdot Y. $$

  It is a nice observation that in general, if $(M, g)$ is a semi-Riemannian manifold which is isometrically embedded as a totally umbilic hypersurface in another semi-Riemannian manifold $(N, h)$, and such that there exists a conformal vector field $Z$ on $N$, then the orthogonal projection $Z^T$ of that vector field on $M$ is a conformal vector field on $M$. Indeed, a simple calculation shows that on $ TM$, if $\mathcal{L}_Zh= \alpha h$, then
  \begin{equation}\label{eq: L}
    \mathcal{L}_{Z^T}g= 2 \alpha g -2 A^{Z^\perp}.
  \end{equation}
  Therefore $Z^T$ is conformal on $M$ if $M$ is totally umbilic.
  In particular, we can construct a conformal vector field on  $M_{p,q}(\mu)$ by projecting any conformal vector field on $\mathbb R^{p,q}$ onto $M_{p,q}(\mu)$.

  In the following, we consider the special case where the conformal vector field $Y$ on $M_{p,q}(\mu)$ is the orthogonal projection of a constant vector field on $\mathbb R^{p,q}$. More precisely, fix $Z_0\in \mathbb R^{p,q}$, considered as a parallel vector field on $\mathbb R^{p,q}$. The orthogonal projection $Y$ of $-\mu Z_0$ (this choice will make the conformal factor looks neater) on $M_{p,q}(\mu)$ is then given by $-\mu Z_0=(- \mu Z_0)^T+ (-\mu Z_0)^\perp=  Y(X)-(Z_0\cdot X)X$, or equivalently,
  \begin{equation}\label{eq: Y}
    Y(X)=-\mu Z_0+(Z_0\cdot X)X\quad \textrm{ for $X\in M_{p,q}(\mu)$. }
  \end{equation}
  It is easily shown that the second fundamental form of $M_{p,q}(\mu) $ in $\mathbb{R}^{p,q}$ is
  $$ A(U,V)=\mu g(U,V)X\quad   \textrm{for $X\in M_{p,q}(\mu)$ and $U,V \in T_XM_{p,q}(\mu)$.}$$
  In particular, for $Y$ defined in \eqref{eq: Y}, in view of \eqref{eq: L}, we have
  \begin{equation}\label{eq: L_Yg}
    \mathcal{L}_Yg = 2 (Z_0 \cdot X )g \quad \textrm{at $X\in M_{p,q}(\mu)$.}
  \end{equation}
  By Theorem \ref{thm: main}, Theorem \ref{thm: main'}, and in view of \eqref{eq: L_Yg}, we have the following result:
\begin{theorem}\label{thm: 2}
 Let $\Sigma$ be an $m$-dimensional closed oriented semi-Riemannian manifold isometrically immersed in $M_{p,q}(\mu)$.
 Let
$f$ be a smooth function on $\Sigma$, $Z_0\in \mathbb{R}^{p,q}$ be fixed and $Y(X)$ be given by \eqref{eq: Y}.
\begin{enumerate}
  \item
  If $0\le k\le m-1$ is even, then
  \begin{equation*}
  \begin{split}
    \int_\Sigma (Z_0\cdot X) f \sigma_k dS(X)
    =&\int_\Sigma f\sigma_{k+1}\cdot Y(X) dS(X)\\
    &- \frac{1}{(m-k){m\choose k}}\int_\Sigma \langle T_k(\nabla f), Y^T\rangle dS(X).
  \end{split}
  \end{equation*}
  \item
  If  $m=n-1$ (i.e. hypersurface), then
  \begin{equation*}
  \begin{split}
    \int_\Sigma (Z_0\cdot X)f \sigma_k dS(X)
    =&\int_\Sigma f\sigma_{k+1}\nu\cdot Y(X)dS(X)\\
    &- \frac{1}{(n-1-k){{n-1}\choose k}}\int_\Sigma \langle T_k(\nabla f), Y^T\rangle dS(X).
  \end{split}
  \end{equation*}
   Here $\sigma_k$ and $ \sigma_{k+1}$ are scalars, $T_k$ is understood to be an ordinary $2$-tensor, and
      $\nu$ is a unit normal vector field of $\Sigma$ in $M_{p,q}(\mu)$.
      \item
       If there exists (not necessarily distinct) normal fields $\nu_1, \cdots, \nu_k$ to $\Sigma$ which are parallel in the normal bundle. Then
  \begin{equation*}
  \begin{split}
    &\int_\Sigma (Z_0 \cdot X) f \sigma_k(\nu_1, \cdots, \nu_k) dS(X)\\
    =&\int_\Sigma f\sigma_{k+1}(\nu_1, \cdots, \nu_k, Y^\perp)dS(X)
    - \frac{1}{(m-k){m\choose k}}\int_\Sigma \langle T_k(\nu_1,\cdots, \nu_k)(\nabla f), Y^T\rangle dS(X).
  \end{split}
  \end{equation*}
\end{enumerate}
\end{theorem}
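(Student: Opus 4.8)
The plan is to read Theorem \ref{thm: 2} off Theorem \ref{thm: main} and Theorem \ref{thm: main'} by taking the ambient manifold to be $M=M_{p,q}(\mu)$ and the conformal vector field to be $Y$. First I would record the two facts that make the hypotheses of those theorems applicable. One: $M_{p,q}(\mu)$ has constant curvature — this was already noted above, since it is totally umbilic in $\mathbb{R}^{p,q}$ with $A(U,V)=\mu g(U,V)X$, so the Gauss equation yields constant sectional curvature. Two: $Y$, defined in \eqref{eq: Y}, satisfies $\mathcal{L}_Y g = 2(Z_0\cdot X)g$ on $M_{p,q}(\mu)$; this is precisely \eqref{eq: L_Yg}, obtained from \eqref{eq: L} applied to the umbilic inclusion $M_{p,q}(\mu)\hookrightarrow \mathbb{R}^{p,q}$ together with the vanishing of $\mathcal{L}_{-\mu Z_0}$ of the flat metric (because $Z_0$, viewed as a parallel field on $\mathbb{R}^{p,q}$, is Killing). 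Hence $Y$ is a conformal vector field on $M_{p,q}(\mu)$ with conformal factor $\alpha(X)=Z_0\cdot X$.

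Second I would observe that conformality of $Y$ on the ambient $M_{p,q}(\mu)$ implies conformality of $Y$ \emph{along} $\Sigma$ in the sense used in Section \ref{sec: main}: for tangential $V,W\in\Gamma(T\Sigma)$, which are also tangent to $M_{p,q}(\mu)$, one has $\overline \nabla _V Y\cdot W + \overline \nabla _W Y\cdot V = (\mathcal{L}_Y g)(V,W) = 2(Z_0\cdot X)\langle V,W\rangle$. Thus the restriction of $Y$ to $\Sigma$, regarded as an element of $\Gamma(\phi^*(TM_{p,q}(\mu)))$, is an admissible conformal field with $\alpha = Z_0\cdot X$, and the caveats for $k=0,1$ in Theorem \ref{thm: main} are moot since $M_{p,q}(\mu)$ has constant curvature.

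With these in hand, the three items follow by substituting $X=Y$ and $\alpha = Z_0\cdot X$ into \eqref{eq: even}, into \eqref{eq: hypersurf}, and into Theorem \ref{thm: main'}, respectively: the left side $\int_\Sigma \alpha f\sigma_k$ becomes $\int_\Sigma (Z_0\cdot X)f\sigma_k$, and the right-hand terms become the stated expressions in $Y$. In item (1), $\sigma_{k+1}$ is normal-bundle valued, so $\sigma_{k+1}\cdot Y$ sees only $Y^\perp$, consistent with the notation $\sigma_{k+1}(\nu_1,\cdots,\nu_k,Y^\perp)$ in item (3); in item (3) the normal fields $\nu_1,\cdots,\nu_k$ are assumed parallel in the normal bundle, exactly as required by Theorem \ref{thm: main'}. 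I do not expect a genuine obstacle here: everything reduces to the divergence identity of Proposition \ref{prop: main} and to the computation \eqref{eq: L_Yg}. The one point deserving a sentence of care is the passage from ``conformal on $M_{p,q}(\mu)$'' to ``conformal along $\Sigma$'', which is handled by the elementary identity displayed above.
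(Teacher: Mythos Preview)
Your proposal is correct and follows exactly the paper's approach: the paper simply states that Theorem~\ref{thm: 2} follows ``by Theorem~\ref{thm: main}, Theorem~\ref{thm: main'}, and in view of \eqref{eq: L_Yg}'', and you have spelled out precisely this, with the constant-curvature observation and the conformality computation \eqref{eq: L_Yg} as the only inputs. Your added remark about passing from ``conformal on $M_{p,q}(\mu)$'' to ``conformal along $\Sigma$'' is a harmless clarification that the paper leaves implicit.
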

  We can actually get rid of $Z_0$ in the formulas in Theorem \ref{thm: 2}. Indeed, by \eqref{eq: Y}, we have $\nu \cdot Y=-\mu \nu\cdot Z_0$ and $\langle T_k (\nabla  f), Y^T\rangle= -\mu T_k (\nabla f)\cdot Z_0$. Therefore, say, when $m=n-1$, the formula becomes
  $$\int_\Sigma f \sigma_k X\cdot Z_0+\mu \int_\Sigma f \sigma_{k+1}\nu\cdot Z_0-\frac{\mu}{(n-1-k){{n-1}\choose k}}\int_\Sigma T_k(\nabla f)\cdot Z_0=0.$$
  Since $Z_0$ is arbitrary, we have
  \begin{theorem}[Theorem \ref{thm: 2} restated]\label{thm: 2'}
 Let $\Sigma$ be an $(n-1)$-dimensional closed oriented semi-Riemannian manifold isometrically immersed in $M_{p,q}(\mu)$, where $p+q=n+1$.
 Let
$f$ be a smooth function on $\Sigma$, then
  as a vector in $\mathbb{R}^{p,q}$
  $$\int_\Sigma f \sigma_k X  +\mu \int_\Sigma f \sigma_{k+1}\nu  -\frac{\mu}{(n-1-k){{n-1}\choose k}}\int_\Sigma T_k(\nabla f)  =0.$$
  \end{theorem}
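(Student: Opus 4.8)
The plan is to deduce Theorem \ref{thm: 2'} from case (2) of Theorem \ref{thm: 2} by eliminating the auxiliary constant vector $Z_0$. The one geometric input needed is that $M_{p,q}(\mu)$ is the level set $\{X\cdot X=\mu\}$, so that a vector $V$ lies in $T_XM_{p,q}(\mu)$ precisely when $V\cdot X=0$; in particular the unit normal $\nu$ of $\Sigma$ in $M_{p,q}(\mu)$ and the tangent vector field $T_k(\nabla f)$ along $\Sigma$ are both $g$-orthogonal to the position vector $X$.

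First I would substitute the explicit formula \eqref{eq: Y}, namely $Y(X)=-\mu Z_0+(Z_0\cdot X)X$, into the identity of Theorem \ref{thm: 2}(2), using $\nu\cdot X=0$ and $T_k(\nabla f)\cdot X=0$ to simplify
\begin{equation*}
\nu\cdot Y=-\mu\,\nu\cdot Z_0,\qquad \langle T_k(\nabla f),Y^T\rangle=T_k(\nabla f)\cdot Y=-\mu\,T_k(\nabla f)\cdot Z_0.
\end{equation*}
Rewriting also the left-hand integrand as $(Z_0\cdot X)f\sigma_k=f\sigma_k\,(X\cdot Z_0)$, each of the three resulting terms is the $\mathbb{R}^{p,q}$-pairing of $Z_0$ with a fixed $\mathbb{R}^{p,q}$-valued integral over $\Sigma$ (these integrals being understood componentwise in the standard basis of $\mathbb{R}^{p,q}$, hence well defined since $\Sigma$ is closed). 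Collecting terms then yields
\begin{equation*}
\left(\int_\Sigma f\sigma_k X+\mu\int_\Sigma f\sigma_{k+1}\nu-\frac{\mu}{(n-1-k){{n-1}\choose k}}\int_\Sigma T_k(\nabla f)\right)\cdot Z_0=0.
\end{equation*}

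Finally, since $Z_0\in\mathbb{R}^{p,q}$ is arbitrary and the bilinear form $\cdot$ on $\mathbb{R}^{p,q}$ is nondegenerate, the bracketed vector must vanish, which is exactly the assertion of Theorem \ref{thm: 2'}. The argument is essentially bookkeeping; the only points deserving a line of justification are the two orthogonality relations $\nu\cdot X=0$ and $T_k(\nabla f)\cdot X=0$ used in the simplification step, and the remark that it is the nondegeneracy — rather than the positive definiteness — of the ambient indefinite metric that licenses passing from ``pairs trivially with every $Z_0$'' to ``is the zero vector''. I do not expect any genuine obstacle.
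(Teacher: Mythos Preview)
Your proposal is correct and follows essentially the same approach as the paper: substitute \eqref{eq: Y} into Theorem \ref{thm: 2}(2), use $\nu\cdot X=0$ and $T_k(\nabla f)\cdot X=0$ to get $\nu\cdot Y=-\mu\,\nu\cdot Z_0$ and $\langle T_k(\nabla f),Y^T\rangle=-\mu\,T_k(\nabla f)\cdot Z_0$, rewrite everything as a pairing with the arbitrary $Z_0$, and conclude by nondegeneracy. Your explicit mention of nondegeneracy (rather than definiteness) is a nice touch that the paper leaves implicit.
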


In the following, we apply Theorem \ref{thm: 2} to $M_{p,q}(\mu)$ for different $(p,q, \mu)$. For simplicity, we only give the result when $\Sigma$ is a hypersurface in $M_{p,q}(\mu)$ (and consequently, all $\sigma_k$ are scalars).

Let us consider the case where $(p,q, \mu)=(n+1, 0, 1)$ so that $M_{p, q}(\mu)=\mathbb S^n$. Choose $Z_0\in \mathbb S^n$ and $(r, \theta)$ be the geodesic polar coordinates around $Z_0$ on $\mathbb S^n$, where $\theta\in \mathbb S^{n-1}$. Then $$Y=\sin r\partial _r\quad \textrm{and}\quad Z_0\cdot X=\cos  r.$$
By Theorem \ref{thm: 2}, we have

\begin{corollary}\label{cor: sphere}
With the notations above,
let $\Sigma$ be a closed hypersurface in $\mathbb{S}^n$ and $\nu$ be its unit normal.
   Suppose $f$ is a smooth function on $\Sigma$. Then for $0\le k\le n-2$,
  \begin{equation*}
  \begin{split}
    \int_\Sigma \cos r f \sigma_k
    =&\int_\Sigma f\sigma_{k+1}\nu\cdot (\sin r \partial _r)
    - \frac{1}{(n-1-k){{n-1}\choose k}}\int_\Sigma \langle T_k(\nabla f), (\sin r \partial _r) ^T\rangle.
  \end{split}
  \end{equation*}
\end{corollary}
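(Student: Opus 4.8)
The plan is to obtain Corollary \ref{cor: sphere} as an immediate specialization of Theorem \ref{thm: 2}(2) to the case $(p,q,\mu)=(n+1,0,1)$, for which $M_{p,q}(\mu)=\mathbb S^n$ carries the round metric. The only genuine work is to identify the two geometric quantities $Z_0\cdot X$ and $Y(X)$ appearing in Theorem \ref{thm: 2} with their expressions in the geodesic polar coordinates $(r,\theta)$ centered at $Z_0$, namely $Z_0\cdot X=\cos r$ and $Y=\sin r\,\partial_r$.

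First I would recall the isometric embedding $\mathbb S^n\hookrightarrow\mathbb R^{n+1,0}=\mathbb R^{n+1}$ as the unit sphere $\{X:X\cdot X=1\}$, so $\mu=1$. For a fixed point $Z_0\in\mathbb S^n$ and the geodesic polar coordinates $(r,\theta)$ about $Z_0$, the geodesic of length $r$ from $Z_0$ in a unit direction $v\in T_{Z_0}\mathbb S^n$ is the great circle arc $X(r)=\cos r\,Z_0+\sin r\,v$; reading off the $Z_0$-component (using $Z_0\cdot Z_0=1$, $Z_0\cdot v=0$) gives $Z_0\cdot X=\cos r$. Differentiating in $r$ gives $\partial_r=-\sin r\,Z_0+\cos r\,v$. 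Then, plugging into formula \eqref{eq: Y} with $\mu=1$, $Y(X)=-Z_0+(Z_0\cdot X)X=-Z_0+\cos r\,X=-Z_0+\cos r(\cos r\,Z_0+\sin r\,v)=-\sin^2 r\,Z_0+\sin r\cos r\,v=\sin r\,\partial_r$, which is exactly the claimed identity. (One should note $Y$ vanishes at $r=0$ and at the antipode, where the polar coordinates degenerate, but this causes no difficulty since the identity $Y=\sin r\,\partial_r$ holds as smooth vector fields on $\mathbb S^n$ after the obvious interpretation.)

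Second I would substitute these two identities into the hypersurface formula of Theorem \ref{thm: 2}(2). Since $\Sigma$ is a closed oriented hypersurface in $\mathbb S^n=M_{n+1,0}(1)$, that theorem applies with $n$ there playing the role of the ambient dimension; writing $m=n-1$ and keeping the range $0\le k\le n-2=m-1$, the formula reads
\begin{equation*}
\int_\Sigma (Z_0\cdot X)f\sigma_k=\int_\Sigma f\sigma_{k+1}\,\nu\cdot Y-\frac{1}{(n-1-k)\binom{n-1}{k}}\int_\Sigma\langle T_k(\nabla f),Y^T\rangle,
\end{equation*}
and replacing $Z_0\cdot X$ by $\cos r$ and $Y$ by $\sin r\,\partial_r$ yields precisely the displayed equation in Corollary \ref{cor: sphere}. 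Here $Y^T=(\sin r\,\partial_r)^T$ is the tangential projection of $\sin r\,\partial_r$ onto $T\Sigma$, matching the notation in the statement.

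There is essentially no obstacle here: the content of the corollary is entirely in Theorem \ref{thm: 2}, and the proof is the bookkeeping of the polar-coordinate expressions for $Z_0\cdot X$ and $Y$. The one point that deserves a word of care is the orientation/normalization convention for $\nu$ and for $\sigma_k$ on $\Sigma\subset\mathbb S^n$, and the fact that the conformal vector field $Y$ on $\mathbb S^n$ is indeed the one produced by \eqref{eq: Y}, i.e. the tangential projection to $\mathbb S^n$ of the parallel field $-Z_0$ on $\mathbb R^{n+1}$; both are already settled in the discussion preceding Theorem \ref{thm: 2}, so I would simply cite that. Accordingly, the proof I would write is just: "This follows from Theorem \ref{thm: 2} by taking $(p,q,\mu)=(n+1,0,1)$, noting that in geodesic polar coordinates $(r,\theta)$ around $Z_0$ one has $Z_0\cdot X=\cos r$ and $Y=\sin r\,\partial_r$."
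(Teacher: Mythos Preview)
Your proposal is correct and takes essentially the same approach as the paper: the paper states that for $(p,q,\mu)=(n+1,0,1)$ one has $Y=\sin r\,\partial_r$ and $Z_0\cdot X=\cos r$, and then simply writes ``By Theorem \ref{thm: 2}, we have'' before stating the corollary. Your argument supplies the explicit verification of these two identities via the great-circle parametrization $X=\cos r\,Z_0+\sin r\,v$, which the paper omits, but the route is identical.
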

By substituting different functions $f$ in Corollary \ref{cor: sphere}, we have:
\begin{proposition}\label{prop: sphere2}
  With the same assumptions as in Corollary \ref{cor: sphere}, suppose $0\le k\le n-2$ and
   $\Sigma$ is contained in the open hemisphere centered at $Z_0$. Let $f$ be a smooth function on $\mathbb{R}$.
   Then we have
  $$\int_\Sigma f(r) \cos r\sigma_k=\int_\Sigma f(r) \sigma_{k+1}\nu\cdot Y- \frac{1}{(n-1-k){{n-1}\choose k}}\int_\Sigma \frac{f'(r)}{\sin r}\langle T_k(Y^T), Y^T\rangle$$
  and
  $$\int_\Sigma f(u)\cos r\sigma_k=\int_\Sigma uf(u)\sigma_{k+1}- \frac{1}{(n-1-k){{n-1}\choose k}}\int_\Sigma f'(u)\langle T_kA^\nu(Y^T), Y^T\rangle$$
  where $Y=\sin r \partial _r$ and $u=Y\cdot\nu$.
\end{proposition}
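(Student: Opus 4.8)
The plan is to derive both identities by feeding suitable functions on $\Sigma$ into Corollary \ref{cor: sphere} and rewriting the resulting gradient terms by means of elementary properties of the conformal field $Y=\sin r\,\partial_r$ on $\mathbb S^n$. Since $\Sigma$ lies in the open hemisphere centered at $Z_0$ we have $r<\pi/2$ on $\Sigma$, and after an arbitrarily small rotation of $Z_0$ (which preserves this hypothesis by compactness of $\Sigma$) we may assume $Z_0\notin\Sigma$, so that $0<r<\pi/2$ and $r$, hence $f(r)$, is smooth on $\Sigma$; the original $Z_0$ is recovered by letting it tend back to its value, since both sides of the identities are continuous in $Z_0$ (the integrand $\frac{f'(r)}{\sin r}\langle T_k(Y^T),Y^T\rangle$ extends continuously across $r=0$ because $Y^T$ vanishes there).

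For the first identity I would take the function in Corollary \ref{cor: sphere} to be $f(r)$. On $\mathbb S^n$ the radial field $\partial_r$ is the gradient of the distance $r$, so $Y=\sin r\,\partial_r$ projects to $Y^T=\sin r\,\nabla r$ on $\Sigma$; hence $\nabla\bigl(f(r)\bigr)=f'(r)\,\nabla r=\frac{f'(r)}{\sin r}\,Y^T$. Substituting this into the last term of Corollary \ref{cor: sphere} replaces $\langle T_k(\nabla f),Y^T\rangle$ by $\frac{f'(r)}{\sin r}\langle T_k(Y^T),Y^T\rangle$, which is exactly the first displayed formula.

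For the second identity I would instead take the function to be $f(u)$ with $u=Y\cdot\nu$, which is smooth on $\Sigma$. The one computation required is $\nabla u$. Writing $Y(X)=-Z_0+(Z_0\cdot X)X$ as in \eqref{eq: Y} (with $\mu=1$), differentiating in $\mathbb R^{n+1}$ and projecting onto $\mathbb S^n$ gives $\overline\nabla_V Y=(Z_0\cdot X)V=\cos r\,V$ for $V\in T\Sigma$; in particular $\overline\nabla_V Y\cdot\nu=0$. Therefore, for tangential $V$, $\langle\nabla u,V\rangle=V(Y\cdot\nu)=\overline\nabla_V Y\cdot\nu+Y\cdot\overline\nabla_V\nu=Y^T\cdot\overline\nabla_V\nu=\langle A^\nu(Y^T),V\rangle$, so $\nabla u=A^\nu(Y^T)$ — the same relation as $\nabla(X\cdot\nu)=A(X^T)$ used in Proposition \ref{prop: euc1}. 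Feeding $f(u)$ into Corollary \ref{cor: sphere}, and using $\cos r$ on the left, $\nu\cdot Y=u$ in the first term on the right, and $\nabla\bigl(f(u)\bigr)=f'(u)\,A^\nu(Y^T)$ in the last term, yields the second displayed formula.

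I expect the computation $\overline\nabla_V Y=\cos r\,V$ — equivalently, the fact that $Y$ is a closed (gradient) conformal field, so that $\overline\nabla_V Y\cdot\nu$ vanishes — to be the only non-routine ingredient; it is precisely what makes $\nabla u=A^\nu(Y^T)$ hold with no spurious $\cos r$ factor. The rest, together with the harmless technical point of arranging $Z_0\notin\Sigma$ so that $f(r)$ is smooth, is bookkeeping on top of Corollary \ref{cor: sphere}.
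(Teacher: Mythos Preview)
Your proposal is correct and follows essentially the same approach as the paper: the paper's proof also perturbs so that $Z_0\notin\Sigma$, derives the first identity from Corollary~\ref{cor: sphere} via $\sin r\,\nabla r=Y^T$, and derives the second from the fact $\nabla(Y\cdot\nu)=A^\nu(Y^T)$. You have simply supplied the details behind that last fact (namely $\overline\nabla_V Y=\cos r\,V$), which the paper asserts without justification.
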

\begin{proof}By a slight perturbation, we can assume $Z_0\notin \Sigma$.
  The first equation follows from Corollary \ref{cor: sphere} and the fact that $\sin r\nabla r=Y^T$. The second equation follows by the fact that
  $\nabla (Y\cdot \nu)= A^\nu(Y^T)$.
\end{proof}
We have the following analogue of Corollary \ref{cor: Euc area}:
\begin{corollary}\label{cor: sphere3}
With the same assumptions as in Corollary \ref{cor: sphere}, suppose $\Sigma$ is contained in the open hemisphere centered at $Z_0$ and $\sigma_{k}>0$ for some $1\le k\le n-1$. Then
  \begin{equation}\label{ineq: sphere2}
  \mathrm{Area}(\Sigma)=\int_\Sigma \sigma_0\le \int_\Sigma \sigma_1\tan r\le \int_\Sigma \sigma_2 \tan^2 r\le \cdots \le \int_\Sigma \sigma_{k}\tan ^{k}r
  \end{equation}
  and
  \begin{equation}\label{ineq: sphere3}
   \int_\Sigma \sigma_0\cos r \le \int_\Sigma \sigma_1 \sin r \le \int_\Sigma \sigma_2 \tan r \sin r \le \cdots \le\int_\Sigma \sigma_k \tan^{k-1} r \sin r.
  \end{equation}
  If $\Sigma$ is embedded and $\Omega$ is the region in the hemisphere enclosed by $\Sigma$, then
  \begin{equation}\label{ineq: sphere vol}
    n\int_\Omega \cos r \le \int_\Sigma \sigma_0\tan r \cos r \le \int_\Sigma \sigma_1\tan^2 r \cos r \le \cdots \le\int_\Sigma \sigma_k\tan^{k+1} r \cos r.
  \end{equation}
 The equality occurs if and only if $\Sigma$ is a sphere centered at $Z_0$.
 \end{corollary}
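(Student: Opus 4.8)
The plan is to obtain all three chains \eqref{ineq: sphere2}, \eqref{ineq: sphere3}, \eqref{ineq: sphere vol} by feeding suitable radial functions $f$ into the first identity of Proposition \ref{prop: sphere2}, combined with two elementary facts. First, by \cite{marques1997stability} Proposition 3.2, the hypothesis $\sigma_k>0$ gives $T_j>0$ (positive definite) for $0\le j<k$, and then Lemma \ref{lem: identities}\eqref{item: 1} ($\mathrm{tr}(T_j)=(n-1-j)\binom{n-1}{j}\sigma_j$) upgrades this to $\sigma_j>0$ for all $0\le j\le k$. Second, since $\mathbb S^n$ is Riemannian and $Y=\sin r\,\partial_r$, Cauchy--Schwarz gives $\nu\cdot Y\le|Y|=\sin r$ pointwise, and on the open hemisphere $\cos r>0$, $\sin r>0$, so $\tan r$ and $\sec r$ are smooth and increasing on $(0,\pi/2)$.

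The first step I would record is the ``generic step''. Fix $0\le j\le k-1$ (so $j\le n-2$, as needed to apply Proposition \ref{prop: sphere2}, and $T_j>0$, $\sigma_{j+1}>0$) and let $f$ be smooth with $f'\ge 0$ on $(0,\pi/2)$. Because $T_j>0$ and $\sin r>0$, the last term in Proposition \ref{prop: sphere2} is $\ge 0$, whence
$$\int_\Sigma f(r)\cos r\,\sigma_j\ \le\ \int_\Sigma f(r)\,\sigma_{j+1}\,\nu\cdot Y\ \le\ \int_\Sigma f(r)\,\sigma_{j+1}\sin r,$$
the last inequality using $\sigma_{j+1}>0$ and $\nu\cdot Y\le\sin r$. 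Now I would specialize: for \eqref{ineq: sphere2} take $f(r)=\tan^j r\sec r$, so $f\cos r=\tan^j r$ and $f\sin r=\tan^{j+1}r$, giving $\int_\Sigma\sigma_j\tan^j r\le\int_\Sigma\sigma_{j+1}\tan^{j+1}r$; concatenating $j=0,\dots,k-1$ and using $\tan^0r=1$ yields \eqref{ineq: sphere2}. For \eqref{ineq: sphere3}, the choice $f\equiv 1$ (allowed since then $f'=0$) gives $\int_\Sigma\sigma_0\cos r\le\int_\Sigma\sigma_1\sin r$, and $f(r)=\tan^j r$ (for $j\ge 1$) gives $\int_\Sigma\sigma_j\tan^{j-1}r\sin r\le\int_\Sigma\sigma_{j+1}\tan^j r\sin r$; concatenating gives \eqref{ineq: sphere3}.

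For the volume chain \eqref{ineq: sphere vol}, I would first note that $Y=\sin r\,\partial_r$ is a conformal field on $\mathbb S^n$ with $\mathcal L_Y g=2\cos r\,g$, hence $\overline{\mathrm{div}}\,Y=n\cos r$; applying the divergence theorem on $\Omega$ gives $n\int_\Omega\cos r=\int_\Sigma Y\cdot\nu\le\int_\Sigma\sin r=\int_\Sigma\sigma_0\tan r\cos r$. The remaining inequalities are the generic step with $f(r)=\tan^{j+1}r$, which after the identity $\tan^{j+1}r\cos r=\tan^j r\sin r$ reads $\int_\Sigma\sigma_j\tan^{j+1}r\cos r\le\int_\Sigma\sigma_{j+1}\tan^{j+2}r\cos r$. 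Finally, for the equality case: if equality holds throughout a chain, it holds in particular at a step whose $f$ satisfies $f'>0$ (e.g. the step producing $\sigma_k$, or, in \eqref{ineq: sphere3} with $k=1$, the only step, using $\sigma_1>0$ and $\nu\cdot Y=\sin r\Rightarrow\nu=\partial_r$); then $\langle T_j(Y^T),Y^T\rangle\equiv 0$, and since $T_j>0$ this forces $Y^T=0$, i.e. $\sin r\,\nabla r=0$, so $r$ is constant and $\Sigma$ is a geodesic sphere centered at $Z_0$. Conversely, on such a sphere of radius $\rho$ every principal curvature equals $\cot\rho$, so $\sigma_j=\cot^j\rho$ and each integral in the three chains collapses to a fixed multiple of $\mathrm{Area}(\Sigma)$ (resp.\ $\mathrm{Vol}(\Omega)$), so all inequalities become equalities. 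The only real work is the bookkeeping of matching each $f$ to the intended integrand; the single point needing attention is checking $f'\ge 0$ on the hemisphere for each choice, which holds because $\tan r$ and $\sec r$ are increasing there.
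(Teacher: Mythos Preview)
Your proof is correct and follows essentially the same route as the paper: you invoke \cite{marques1997stability} Proposition 3.2 for positivity of $T_j$ and $\sigma_j$, then plug the same radial choices $f(r)=\tan^j r\sec r$, $f(r)=\tan^j r$, and $f(r)=\tan^{j+1}r$ into Proposition \ref{prop: sphere2} together with $\nu\cdot Y\le \sin r$ and the divergence identity $n\int_\Omega\cos r=\int_\Sigma Y\cdot\nu$. Your abstraction of a single ``generic step'' and your more explicit treatment of the equality case (including the separate handling of \eqref{ineq: sphere3} when $k=1$ via the Cauchy--Schwarz equality $\nu\cdot Y=\sin r$) are minor presentational improvements over the paper, but the argument is the same.
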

\begin{proof}
  By \cite{marques1997stability} Proposition 3.2, if $\sigma_{k}>0$ on $\Sigma$, then $T_j$ and $\sigma_j$ are both positive for $0\le j< k$.
    Applying Proposition \ref{prop: sphere2}, we have
  \begin{equation*}
    \begin{split}
      &\int_\Sigma \sigma_j \tan ^j r
      =\int_\Sigma \sigma_j\frac{\tan ^j r}{\cos r}\cos r \\
      =& \int_\Sigma  \sigma_{j+1} \frac{\tan^{j} r}{\cos r} Y\cdot\nu-\frac{1}{(n-1-j){{n-1}\choose j}}\int_\Sigma \frac{\tan^j r}{\sin^2 r}(j \sec^2 r + \tan^2 r)\langle T_j( Y^T),Y^T\rangle\\
      \le&\int_\Sigma  \sigma_{j+1}\frac{\tan^{j} r}{\cos r} Y\cdot\nu\\
      \le&\int_\Sigma  \sigma_{j+1}\tan^{j+1} r.
    \end{split}
  \end{equation*}
  The inequality \eqref{ineq: sphere2} then follows by induction. The inequality \eqref{ineq: sphere3} is similar. For \eqref{ineq: sphere vol}, firstly we have
  $$n\int_\Omega \cos r=\int_\Omega \overline {\mathrm{div}}Y=\int_\Sigma Y\cdot \nu\le \int_\Sigma \sin r=\int_\Sigma \tan r\cos r \sigma_0.$$
  Similar to the above argument, we have, for $0\le j<k$,
  \begin{align*}
    \int_\Sigma \tan^{j+1} r \cos r\sigma_j=&\int_\Sigma \tan^{j+1}r \sigma_{j+1}\nu\cdot Y- \frac{j+1}{(n-1-j){{n-1}\choose j}}\int_\Sigma \frac{\tan ^{j-1}r}{\cos^3 r}\langle T_j(Y^T), Y^T\rangle\\
    \le&\int_\Sigma \tan^{j+2}r\cos r\sigma_{j+1}.
  \end{align*}
  The inequality \eqref{ineq: sphere vol} then follows by induction.

  If the equality case holds, then $\nabla r=0$ and so $\Sigma$ is a sphere centered at $O$. The converse is easy.
\end{proof}

For the case where $(p,q, \mu)=(n,1, -1)$ so that $M_{p, q}(\mu)=\mathbb H^n\sqcup \mathbb H^n$. We can choose $-Z_0\in \mathbb H^n$ and $(r, \theta)$ be the geodesic polar coordinates around $-Z_0$ on $\mathbb H^n$, where $\theta\in \mathbb S^{n-1}$. Then $$Y=\sinh r\partial _r\quad \textrm{and}\quad Z_0\cdot X=\cosh  r.$$
By Theorem \ref{thm: 2}, we have:
\begin{corollary}\label{cor: hyper}
With the notations above, let $\Sigma$ be a closed hypersurface in $\mathbb H^n$ with unit normal vector $\nu$.
   Suppose $f$ is a smooth function on $\Sigma$, then for $0\le k \le n-2$, we have
    $$\int_\Sigma f \cosh r \sigma_k= \int_\Sigma f\sigma_{k+1}\nu\cdot (\sinh r \partial _r)- \frac{1}{(n-1-k){{n-1}\choose k}}\int_\Sigma \langle T_k(\nabla f), (\sinh r \partial _r)^T\rangle.$$
\end{corollary}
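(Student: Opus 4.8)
The plan is to read off Corollary~\ref{cor: hyper} from Theorem~\ref{thm: 2}(2) in the case $(p,q,\mu)=(n,1,-1)$, for which $M_{p,q}(\mu)=\{X\in\mathbb R^{n,1}:X\cdot X=-1\}$ was already identified above as $\mathbb H^n\sqcup\mathbb H^n$. We work on the component containing $-Z_0$, which is precisely the requirement that $Z_0\in\mathbb R^{n,1}$ satisfy $Z_0\cdot Z_0=-1$. Since $\Sigma$ admits a global unit normal $\nu$ it is two-sided, hence orientable inside the orientable manifold $\mathbb H^n$, and $m:=\dim\Sigma=n-1$, so all hypotheses of Theorem~\ref{thm: 2}(2) hold.

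The substantive step is to compute the two ambient quantities appearing in Theorem~\ref{thm: 2}, namely $Z_0\cdot X$ and the conformal field $Y$ of \eqref{eq: Y}, in geodesic polar coordinates $(r,\theta)$ centred at $-Z_0$. In the hyperboloid model the point at distance $r$ from $-Z_0$ along the unit direction $\theta\in T_{-Z_0}\mathbb H^n$ (so $\theta\cdot\theta=1$ and $\theta\cdot Z_0=0$) is
\[
X(r,\theta)=(\cosh r)(-Z_0)+(\sinh r)\,\theta,
\]
and a one-line check gives $X\cdot X=-1$ and $\partial_rX\cdot\partial_rX=1$, so $\partial_r$ is the unit radial field. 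Hence $Z_0\cdot X=-(\cosh r)(Z_0\cdot Z_0)=\cosh r$, and putting $\mu=-1$ in \eqref{eq: Y},
\[
Y(X)=Z_0+(Z_0\cdot X)X=Z_0+(\cosh r)X=-(\sinh^2 r)Z_0+(\sinh r\cosh r)\theta=(\sinh r)\,\partial_r X,
\]
so $Y=\sinh r\,\partial_r$; correspondingly $\mathcal L_Yg=2(\cosh r)g$ by \eqref{eq: L_Yg}.

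It then remains only to substitute. For $0\le k\le m-1=n-2$, Theorem~\ref{thm: 2}(2) reads
\[
\int_\Sigma(Z_0\cdot X)f\,\sigma_k=\int_\Sigma f\,\sigma_{k+1}\,\nu\cdot Y-\frac{1}{(n-1-k)\binom{n-1}{k}}\int_\Sigma\langle T_k(\nabla f),Y^T\rangle,
\]
and replacing $Z_0\cdot X$ by $\cosh r$ and $Y$ by $\sinh r\,\partial_r$ yields exactly the stated identity.

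The only place that needs care is the sign bookkeeping in the middle paragraph — in particular the factor $\mu=-1$ throughout \eqref{eq: Y} and \eqref{eq: L} — together with the verification that $X(r,\theta)$ is arc-length parametrized; everything else is formal. Note that no perturbation is required even if $-Z_0\in\Sigma$: although the coordinate $r$ itself is not smooth at $-Z_0$, the function $\cosh r=Z_0\cdot X$ and the field $\sinh r\,\partial_r=Y$ are restrictions to $\Sigma$ of globally smooth objects on $\mathbb H^n$, and $f$ is an arbitrary smooth function on $\Sigma$, so every integrand above is smooth.
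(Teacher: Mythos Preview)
Your argument is correct and follows exactly the paper's approach: the paper simply records, just before Corollary~\ref{cor: hyper}, that for $(p,q,\mu)=(n,1,-1)$ one has $Y=\sinh r\,\partial_r$ and $Z_0\cdot X=\cosh r$, and then states the corollary as a direct consequence of Theorem~\ref{thm: 2}. Your only addition is to verify these two identities explicitly via the hyperboloid parametrization and to remark on smoothness at $-Z_0$, which is more detail than the paper gives but the same method.
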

\begin{proposition}\label{prop: hyper2}
  With the same assumptions as in Corollary \ref{cor: hyper}, suppose $0\le k\le n-2$ and
   $\Sigma$ is contained in the open hemisphere centered at $-Z_0$. Let $f$ be a smooth function on $\mathbb{R}$.
   Then we have
  $$\int_\Sigma f(r) \cosh r\sigma_k=\int_\Sigma f(r) \sigma_{k+1}\nu\cdot Y- \frac{1}{(n-1-k){{n-1}\choose k}}\int_\Sigma \frac{f'(r)}{\sinh r}\langle T_k(Y^T), Y^T\rangle$$
  and
  $$\int_\Sigma f(u)\cosh r\sigma_k=\int_\Sigma uf(u)\sigma_{k+1}- \frac{1}{(n-1-k){{n-1}\choose k}}\int_\Sigma f'(u)\langle T_kA^\nu(Y^T), Y^T\rangle$$
  where $Y=\sinh r \partial _r$ and $u=Y\cdot\nu$.
\end{proposition}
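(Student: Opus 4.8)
The plan is to deduce both identities from Corollary \ref{cor: hyper} by feeding it suitable functions on $\Sigma$. Recall that Corollary \ref{cor: hyper} gives, for every smooth function on $\Sigma$ and every $0\le k\le n-2$, the formula
\[
\int_\Sigma (\,\cdot\,)\cosh r\,\sigma_k=\int_\Sigma (\,\cdot\,)\,\sigma_{k+1}\,\nu\cdot Y-\frac{1}{(n-1-k)\binom{n-1}{k}}\int_\Sigma\big\langle T_k(\nabla (\,\cdot\,)),Y^T\big\rangle ,
\]
where $Y=\sinh r\,\partial_r$. Exactly as in the proof of Proposition \ref{prop: sphere2}, I would first perturb $Z_0$ slightly so that $-Z_0\notin\Sigma$; since $\mathbb H^n$ has no cut locus, the distance $r$ to $-Z_0$ is then a smooth function on $\Sigma$, hence so are $\cosh r$, $\sinh r$, and the composites $f(r)$ and $f(u)$.

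For the first identity I would apply Corollary \ref{cor: hyper} to the function $f(r)$. The only geometric input is the relation $Y^T=\sinh r\,\nabla r$, which holds because the tangential part of the ambient gradient $\partial_r=\overline\nabla r$ is the intrinsic gradient $\nabla r$ on $\Sigma$. Then $\nabla\big(f(r)\big)=f'(r)\nabla r=\dfrac{f'(r)}{\sinh r}\,Y^T$, so $\big\langle T_k\big(\nabla(f(r))\big),Y^T\big\rangle=\dfrac{f'(r)}{\sinh r}\langle T_k(Y^T),Y^T\rangle$, and substituting into Corollary \ref{cor: hyper} yields the first formula.

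For the second identity I would apply Corollary \ref{cor: hyper} to the function $f(u)$, $u=Y\cdot\nu$. The key point is the identity $\nabla(Y\cdot\nu)=A^\nu(Y^T)$ on $\Sigma$. To prove it, observe that $Y=\sinh r\,\partial_r=\overline\nabla(\cosh r)$ is a closed conformal field on $\mathbb H^n$, and that by \eqref{eq: L_Yg} (taken with $(p,q,\mu)=(n,1,-1)$) its Hessian is $\overline\nabla\,\overline\nabla(\cosh r)=(\cosh r)\,g$; hence $\overline\nabla_{e_i}Y\cdot\nu=\cosh r\,\langle e_i,\nu\rangle=0$ for $e_i$ tangent to $\Sigma$. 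Differentiating $u=Y\cdot\nu$ along $e_i$ and using the Weingarten relation $(\overline\nabla_{e_i}\nu)^T=A^\nu(e_i)$ (the sign being that forced by the convention $A(X,Y)=-(\overline\nabla_XY)^\perp$) gives $e_i(u)=\langle A^\nu(Y^T),e_i\rangle$, i.e. $\nabla u=A^\nu(Y^T)$. Consequently $\nabla\big(f(u)\big)=f'(u)A^\nu(Y^T)$ and $\nu\cdot Y=u$, and plugging into Corollary \ref{cor: hyper} gives the second formula. I expect no genuine obstacle here: the computation is a direct substitution, and the only slightly delicate points are keeping track of the sign convention for the Weingarten map and verifying $\overline\nabla\,\overline\nabla(\cosh r)=(\cosh r)\,g$ on $\mathbb H^n$ — which is just \eqref{eq: L_Yg} once one notices that $Y$ is a gradient field.
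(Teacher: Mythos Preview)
Your proposal is correct and follows exactly the paper's approach: apply Corollary~\ref{cor: hyper} to $f(r)$ and to $f(u)$ respectively, invoking $\sinh r\,\nabla r=Y^T$ for the first identity and $\nabla(Y\cdot\nu)=A^\nu(Y^T)$ for the second. You have simply supplied more detail (the Hessian computation for $\cosh r$ and the sign check for the Weingarten map) than the paper's two-line proof, but the argument is the same.
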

\begin{proof}By a slight perturbation, we can assume $-Z_0\notin \Sigma$.
  The first equation follows from Corollary \ref{cor: hyper} and the fact that $\sinh r\nabla r=Y^T$. The second equation follows by the fact that
  $\nabla (Y\cdot \nu)= A^\nu(Y^T)$.
\end{proof}
We have the following analogue of Corollary \ref{cor: Euc area}.
\begin{corollary}\label{cor: hyper3}
With the same assumptions as in Corollary \ref{cor: hyper}, suppose $\sigma_{k}>0$ for some $1\le k\le n-1$, then
\begin{equation}\label{ineq: hyper area}
  \int_\Sigma \sigma_0 \cosh r \le \int_\Sigma \sigma_1 \sinh r \le \int_\Sigma \sigma_2 \tanh r \sinh r \le\cdots \le\int_\Sigma \sigma_k \tanh^{k-1} r \sinh r.
\end{equation}
  Suppose $\Sigma$ is embedded and $\Omega$ is the region enclosed by $\Sigma$, then
  \begin{equation}\label{ineq: hyper vol}
    n\int_\Omega \cosh r \le \int_\Sigma \sigma_0\tanh r \cosh r \le \int_\Sigma \sigma_1\tanh^2 r \cosh r \le \cdots \le\int_\Sigma \sigma_k\tanh^{k+1} r \cosh r.
  \end{equation}
 The equality occurs if and only if $\Sigma$ is a geodesic sphere centered at $Z_0$.
 \end{corollary}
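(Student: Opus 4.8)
The plan is to run the argument from the proof of Corollary \ref{cor: sphere3} essentially verbatim, replacing $\cos r,\sin r,\tan r$ by $\cosh r,\sinh r,\tanh r$ throughout. First I would recall from \cite{marques1997stability} Proposition 3.2 that $\sigma_k>0$ on $\Sigma$ forces $T_j>0$ and $\sigma_j>0$ for all $0\le j<k$; together with the standing hypothesis this gives $\sigma_j>0$ for $0\le j\le k$ and $T_j> 0$ for $0\le j\le k-1$. After an arbitrarily small perturbation I may also assume $-Z_0\notin\Sigma$, so that $r>0$ and $\sinh r>0$ on $\Sigma$; in contrast with the spherical case, no restriction to a ``hemisphere'' is needed since $\cosh r>0$ everywhere on $\mathbb H^n$, so Proposition \ref{prop: hyper2} applies without extra hypotheses.

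For \eqref{ineq: hyper area} I would apply the first identity of Proposition \ref{prop: hyper2}, with the index $j$ in place of $k$ and with $f(r)=\tanh^j r$, for each $0\le j\le k-1$. Its left-hand side becomes $\int_\Sigma\sigma_j\tanh^j r\cosh r$. On the right, the error term carries the factor $\frac{f'(r)}{\sinh r}=\frac{j\tanh^{j-1}r}{\sinh r\cosh^2 r}\ge 0$, so, since $T_j> 0$, that whole term is $\le 0$; while the remaining term is $\int_\Sigma\tanh^j r\,\sigma_{j+1}\,\nu\cdot Y\le\int_\Sigma\sigma_{j+1}\tanh^j r\,\sinh r=\int_\Sigma\sigma_{j+1}\tanh^{j+1}r\cosh r$, using $\sigma_{j+1}>0$ and the Cauchy--Schwarz bound $\nu\cdot Y\le|Y|=\sinh r$ (recall $Y=\sinh r\,\partial_r$ and $|\partial_r|=1$). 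Hence $\int_\Sigma\sigma_j\tanh^j r\cosh r\le\int_\Sigma\sigma_{j+1}\tanh^{j+1}r\cosh r$, and chaining these for $j=0,\dots,k-1$ and rewriting $\tanh^{j}r\cosh r=\tanh^{j-1}r\sinh r$ where convenient yields \eqref{ineq: hyper area}.

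For \eqref{ineq: hyper vol} I would first note that $Y$ is globally defined on $\mathbb H^n$ with $\mathcal L_Y g=2\cosh r\,g$ by \eqref{eq: L_Yg}, hence $\overline{\mathrm{div}}\,Y=n\cosh r$; the divergence theorem on $\Omega$ then gives $n\int_\Omega\cosh r=\int_\Sigma Y\cdot\nu\le\int_\Sigma\sinh r=\int_\Sigma\sigma_0\tanh r\cosh r$. For the remaining links I would repeat the computation of the previous paragraph, now with $f(r)=\tanh^{j+1}r$ (so $f'(r)=(j+1)\tanh^j r\operatorname{sech}^2 r\ge0$), to get $\int_\Sigma\sigma_j\tanh^{j+1}r\cosh r\le\int_\Sigma\sigma_{j+1}\tanh^{j+2}r\cosh r$ for $0\le j\le k-1$, and conclude by induction.

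For the equality case: if any single inequality in either chain is an equality, then either the (nonnegative) error term vanishes, forcing $\langle T_j(Y^T),Y^T\rangle\equiv 0$ and hence $Y^T=0$ because $T_j>0$; or, in the $j=0$ links where $f'\equiv0$, the Cauchy--Schwarz step is an equality, forcing $\nu\cdot Y=|Y|$, i.e. $\nu=\partial_r$. Either way $\nabla r\equiv0$ on the connected hypersurface $\Sigma$, so $r$ is constant and $\Sigma$ is a geodesic sphere centered at $-Z_0$; the converse is a direct computation. I expect the only delicate points to be bookkeeping: keeping track of the sign of $f'$ and of the positivity of $\sigma_{j+1}$ along the whole chain (so that the pointwise bound $\nu\cdot Y\le\sinh r$ may be multiplied by $\sigma_{j+1}$ without reversing it), and checking that Proposition \ref{prop: hyper2} is legitimately applicable for every index $0\le j\le k-1\le n-2$.
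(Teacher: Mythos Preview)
Your proof is correct and follows essentially the same approach as the paper: positivity of $T_j$ and $\sigma_j$ from \cite{marques1997stability}, the divergence theorem for the first link of \eqref{ineq: hyper vol}, and Proposition \ref{prop: hyper2} with $f(r)=\tanh^{j+1}r$ (respectively $f(r)=\tanh^{j}r$) for the inductive steps, finishing with $\nabla r=0$ in the equality case. Your identification of the center as $-Z_0$ (the pole of the geodesic polar coordinates) rather than $Z_0$ is in fact the correct reading of the setup preceding Corollary \ref{cor: hyper}.
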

\begin{proof}
  By \cite{marques1997stability} Proposition 3.2, if $\sigma_{k}>0$ on $\Sigma$, then $T_j$ and $\sigma_j$ are both positive for $0\le j< k$. Let $Y=\sinh r \partial _r$.
Firstly we have
  $$n\int_\Omega \cosh r=\int_\Omega \overline {\mathrm{div}}Y=\int_\Sigma Y\cdot \nu\le \int_\Sigma \sinh r=\int_\Sigma \tanh r\cosh r \sigma_0.$$
  By applying Proposition \ref{prop: hyper2}, we have, for $0\le j<k$,
  \begin{align*}
    \int_\Sigma \tanh^{j+1} r \cosh r\sigma_j=&\int_\Sigma \tanh^{j+1}r \sigma_{j+1}\nu\cdot Y- \frac{j+1}{(n-1-j){{n-1}\choose j}}\int_\Sigma \frac{\tanh ^{j-1}r}{\cosh^3 r}\langle T_j(Y^T), Y^T\rangle\\
    \le&\int_\Sigma \tanh^{j+2}r\cosh r\sigma_{j+1}.
  \end{align*}
  The inequality \eqref{ineq: hyper vol} then follows by induction. The inequality \eqref{ineq: hyper area} is proved in a similar way as \eqref{ineq: sphere3}.

  If the equality case holds, then $\nabla r=0$ and so $\Sigma$ is a sphere centered at $O$. The converse is easy.
\end{proof}

For the case where $(p,q, \mu)=(n,1, 1)$ so that $M_{p, q}(\mu)=dS_n$, the de Sitter space. We choose $Z_0=(0, \cdots, 0, -1)$ and parametrize $dS_n$ by $X=(\cosh r \;\theta, \sinh r)$, where $\theta\in \mathbb S^{n-1}$.  Then
\begin{equation} \label{eq: dSn}
    Y=\cosh r\partial _r\quad \textrm{and}\quad Z_0\cdot X=\sinh  r.
\end{equation}
By Theorem \ref{thm: 2}, we have

\begin{corollary}\label{cor: dS}
With the notations above, let $\Sigma$ be an $(n-1)$-dimensional closed oriented semi-Riemannian manifold isometrically immersed in $d S_n$ with unit normal vector $\nu$.
   Suppose $f$ is a smooth function on $\Sigma$, then for $0\le k \le n-2$, we have
  $$\int_\Sigma f \sinh r \sigma_k= \int_\Sigma f\sigma_{k+1}\nu \cdot (\cosh r \partial _r)-  \frac{1}{(n-1-k){{n-1}\choose k}}\int_\Sigma \langle T_k(\nabla f), (\cosh r \partial _r)^T\rangle.$$
\end{corollary}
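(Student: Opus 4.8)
The plan is to deduce Corollary \ref{cor: dS} directly from Theorem \ref{thm: 2}, specialized to the case $(p,q,\mu)=(n,1,1)$ with $Z_0=(0,\dots,0,-1)$, exactly as the parenthetical ``By Theorem \ref{thm: 2}, we have'' suggests. So the whole task reduces to identifying the two quantities $Z_0\cdot X$ and $Y(X)$ appearing in Theorem \ref{thm: 2} with $\sinh r$ and $\cosh r\,\partial_r$ respectively, under the stated parametrization $X=(\cosh r\,\theta,\sinh r)$ with $\theta\in\mathbb S^{n-1}$, and then copying down the hypersurface case (item (2)) of Theorem \ref{thm: 2} with $f$ the given function on $\Sigma$.

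First I would verify that $X=(\cosh r\,\theta,\sinh r)$ indeed lies on $dS_n=M_{n,1}(1)=\{X: X\cdot X=1\}$: since the metric on $\mathbb R^{n,1}$ is $dx_1^2+\dots+dx_n^2-dx_{n+1}^2$, we get $X\cdot X=\cosh^2 r\,|\theta|^2-\sinh^2 r=\cosh^2 r-\sinh^2 r=1$, using $|\theta|^2=1$ for the Euclidean norm on $\mathbb S^{n-1}\subset\mathbb R^n$. Then $Z_0\cdot X=(0,\dots,0,-1)\cdot(\cosh r\,\theta,\sinh r)=-(-1)(\sinh r)=\sinh r$ (the sign flip is from the negative-definite last coordinate of the $\mathbb R^{n,1}$ inner product). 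This matches the first displayed identity in \eqref{eq: dSn}.

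Next I would check the formula for $Y$. By \eqref{eq: Y}, $Y(X)=-\mu Z_0+(Z_0\cdot X)X=-Z_0+(\sinh r)X$ since $\mu=1$. Computing coordinatewise, $-Z_0=(0,\dots,0,1)$ and $(\sinh r)X=(\sinh r\cosh r\,\theta,\sinh^2 r)$, so $Y(X)=(\sinh r\cosh r\,\theta,\ 1+\sinh^2 r)=(\sinh r\cosh r\,\theta,\cosh^2 r)=\cosh r\,(\sinh r\,\theta,\cosh r)$. On the other hand, differentiating $X=(\cosh r\,\theta,\sinh r)$ with respect to $r$ gives $\partial_r=(\sinh r\,\theta,\cosh r)$, so $Y(X)=\cosh r\,\partial_r$, which is the second identity in \eqref{eq: dSn}. (One should also note that $\partial_r$ is the unit radial field: $\partial_r\cdot\partial_r=\sinh^2 r-\cosh^2 r=-1$ in the time direction of $dS_n$ — consistent with $dS_n$ being Lorentzian — but this sign does not affect the statement, which only involves $\nu\cdot Y$ and $\langle T_k(\nabla f),Y^T\rangle$ as written.) With these two identifications in hand, substituting $Z_0\cdot X=\sinh r$ and $Y(X)=\cosh r\,\partial_r$ into item (2) of Theorem \ref{thm: 2}, and noting $n-1=m$ so that $(m-k)\binom{m}{k}=(n-1-k)\binom{n-1}{k}$, yields precisely the displayed formula of Corollary \ref{cor: dS} for $0\le k\le m-1=n-2$.

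There is essentially no hard part here; the corollary is a pure specialization. The only point requiring minor care is bookkeeping of the indefinite signature: one must track which slots of the $\mathbb R^{n,1}$ inner product carry the minus sign so that $Z_0\cdot X$ and the constraint $X\cdot X=1$ come out right, and one must be comfortable that $\partial_r$ is timelike so that $dS_n$ has the claimed Lorentzian constant-curvature structure making Theorem \ref{thm: 2} applicable (this was already established in the text, where $M_{p,q}(\mu)$ was observed to be totally umbilic, hence of constant curvature, in $\mathbb R^{p,q}$). As with the sphere and hyperbolic cases treated just above, once the explicit $Y$ and $Z_0\cdot X$ are in place the proof is complete.
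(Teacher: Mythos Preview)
Your proposal is correct and follows exactly the approach indicated in the paper: the paper simply asserts the identities \eqref{eq: dSn} and then says ``By Theorem \ref{thm: 2}, we have'' Corollary \ref{cor: dS}, and you have merely supplied the routine verification of those identities before invoking item (2) of Theorem \ref{thm: 2}. There is nothing to add.
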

Similar to Proposition \ref{prop: sphere2}, we have
\begin{corollary}\label{cor: dS2}
  With the same assumptions as in Corollary \ref{cor: dS}, suppose $0\le k\le n-2$. Let $f$ be a smooth function on $\mathbb{R}$.
   Then we have
  $$\int_\Sigma f(r) \sinh r\sigma_k=\int_\Sigma f(r) \sigma_{k+1}\nu\cdot Y- \frac{1}{(n-1-k){{n-1}\choose k}}\int_\Sigma \frac{f'(r)}{\cosh r}\langle T_k(Y^T), Y^T\rangle$$
  and
  $$\int_\Sigma f(u)\sinh r\sigma_k=\int_\Sigma uf(u)\sigma_{k+1}- \frac{1}{(n-1-k){{n-1}\choose k}}\int_\Sigma f'(u)\langle T_kA^\nu(Y^T), Y^T\rangle$$
  where $Y=\cosh r \partial _r$ and $u=Y\cdot\nu$.
\end{corollary}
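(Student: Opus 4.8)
The plan is to deduce Corollary \ref{cor: dS2} from Corollary \ref{cor: dS} by substituting two particular choices of the function $f$, in exact parallel with the way Propositions \ref{prop: sphere2} and \ref{prop: hyper2} were obtained from Corollaries \ref{cor: sphere} and \ref{cor: hyper}. First note that $Z_0=(0,\dots,0,-1)$ has $Z_0\cdot Z_0=-1\ne 1$, so $Z_0\notin dS_n$, and in fact the coordinate $r$ of \eqref{eq: dSn} is globally smooth on $dS_n$ (it is determined by the last ambient coordinate $\sinh r$), hence smooth on $\Sigma$; this is why, unlike for Propositions \ref{prop: sphere2} and \ref{prop: hyper2}, no ``hemisphere'' restriction and no perturbation argument are needed.

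For the first formula, I would apply Corollary \ref{cor: dS} with $f$ replaced by the composition $r\mapsto f(r)$. Then $\nabla(f(r))=f'(r)\nabla r$, so the last term of Corollary \ref{cor: dS} becomes $-\frac{1}{(n-1-k){{n-1}\choose k}}\int_\Sigma f'(r)\langle T_k(\nabla r),Y^T\rangle$. It remains to record that $Y^T$ and $\nabla r$ are proportional: since $Y=\cosh r\,\partial_r$ by \eqref{eq: dSn} and the tangential-to-$\Sigma$ part of $\partial_r$ is a multiple of $\nabla r$, one gets $Y^T=\cosh r\,\nabla r$ (with the sign and factor dictated by the parametrization \eqref{eq: dSn}), so $\langle T_k(\nabla r),Y^T\rangle=\frac{1}{\cosh r}\langle T_k(Y^T),Y^T\rangle$ by symmetry of $T_k$. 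Substituting this back produces the first displayed identity.

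For the second formula, I would instead apply Corollary \ref{cor: dS} with $f$ replaced by $u\mapsto f(u)$, where $u=Y\cdot\nu$. Then $\nu\cdot Y=u$, so the first term on the right becomes $\int_\Sigma u f(u)\sigma_{k+1}$. For the gradient term, $\nabla(f(u))=f'(u)\nabla u$, and the key pointwise identity is $\nabla(Y\cdot\nu)=A^\nu(Y^T)$: differentiating $Y\cdot\nu$ along $e_i\in T\Sigma$ gives $\overline\nabla_{e_i}Y\cdot\nu+Y\cdot\overline\nabla_{e_i}\nu$, and since $Y$ is the projection of the constant field $-Z_0$, one has $\overline\nabla_V Y=(Z_0\cdot X)V$ for $V$ tangent to $dS_n$ (cf. \eqref{eq: Y}, \eqref{eq: L_Yg}), so $\overline\nabla_{e_i}Y=\alpha e_i$ is already tangent to $\Sigma$ and the term $\overline\nabla_{e_i}Y\cdot\nu$ vanishes, leaving $Y\cdot\overline\nabla_{e_i}\nu=\langle A^\nu(Y^T),e_i\rangle$. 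Hence $\langle T_k(\nabla(f(u))),Y^T\rangle=f'(u)\langle T_kA^\nu(Y^T),Y^T\rangle$, which yields the second identity.

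Once these two pointwise identities are established, the corollary is a pure substitution, and nothing in it uses the signature of $\Sigma$ (the operators $\nabla$, $T_k$ and the divergence theorem are set up for general semi-Riemannian $\Sigma$ in Section \ref{sec: prelim}). The one mildly delicate point I expect is bookkeeping the signs and scalar factors in the relation $Y^T=\cosh r\,\nabla r$ coming from the Lorentzian metric on $dS_n$; the Riemannian analogues $\sin r\,\nabla r=Y^T$ and $\sinh r\,\nabla r=Y^T$ were the corresponding inputs in Propositions \ref{prop: sphere2} and \ref{prop: hyper2}, and the rest of the computation is routine.
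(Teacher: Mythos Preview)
Your proposal is correct and follows exactly the approach the paper intends: the paper gives no separate proof of Corollary~\ref{cor: dS2} beyond the remark ``Similar to Proposition~\ref{prop: sphere2}'', and your two substitutions $f=f(r)$ and $f=f(u)$ into Corollary~\ref{cor: dS}, together with the identities $Y^T\propto\nabla r$ and $\nabla(Y\cdot\nu)=A^\nu(Y^T)$, are precisely that argument. Your observation that $Z_0\notin dS_n$ (so no perturbation is needed) and your justification of $\nabla(Y\cdot\nu)=A^\nu(Y^T)$ via $\overline\nabla_V Y=(Z_0\cdot X)V$ are more detail than the paper itself supplies.

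One caution on the point you already flag: in the parametrization \eqref{eq: dSn} the induced metric on $dS_n$ is $-dr^2+\cosh^2 r\, g_{\mathbb S^{n-1}}$, so $\partial_r$ is timelike and the ambient gradient of $r$ is $-\partial_r$; this gives $Y^T=-\cosh r\,\nabla r$ rather than $+\cosh r\,\nabla r$. The substitution then yields $\langle T_k(\nabla f(r)),Y^T\rangle=-\dfrac{f'(r)}{\cosh r}\langle T_k(Y^T),Y^T\rangle$, so the sign in front of the last integral flips relative to the Riemannian analogues. This does not affect your method, only the final sign bookkeeping you anticipated.
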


\subsection{Alexandrov-type results}
We have the following extension of Alexandrov's theorem.
\begin{corollary}\label{cor: alex}

Let $(M,g)$ be $\mathbb{R}^n$, $\mathbb{H}^n$, or $\mathbb{S}^n_+$ (the open hemisphere). Let $r$ be the distance on $M$ from a fixed point $O\in M$, taken to be the center if $M=\mathbb{S}^n_+$.
  Suppose $\Sigma$ is a closed  hypersurface embedded in $M$. Assume $f>0$, $f'\ge 0$ and there exists $1\le k \le n-1$ such that
  \begin{enumerate}
    \item
    $\sigma_k f(r)$ is constant,  or
    \item
    $\sigma_k f(u)$  is constant, where $u=X\cdot \nu$, and $\Sigma$ is convex (i.e. $A^\nu>0$). Here
    \begin{equation}\label{eq: X}
      X=\begin{cases}
   r\partial _r\quad &\textrm{if }M=\mathbb{R}^n\\
  \sin r \partial _r\quad &\textrm{if }M=\mathbb{S}^n_+\\
  \sinh r\partial _r\quad &\textrm{if }M=\mathbb{H}^n,
\end{cases}
    \end{equation}
  \end{enumerate}
  Then $\Sigma$ is a geodesic sphere, which is centered at $O$ if $f$ is injective.
\end{corollary}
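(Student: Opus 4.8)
The plan is to bracket the hypothesis ``$\sigma_k f=\mathrm{const}$'' between the generalized Minkowski formula of this paper (Propositions \ref{prop: euc1}, \ref{prop: sphere2}, \ref{prop: hyper2}) at level $k-1$ and a Heintze--Karcher-type inequality, and read off rigidity from the resulting forced equality. Throughout, write $(\lambda,\alpha)$ for $(r,1)$, $(\sin r,\cos r)$, $(\sinh r,\cosh r)$ according as $M$ is $\mathbb R^n$, $\mathbb S^n_+$, $\mathbb H^n$, so that $X=\lambda\partial_r$ is the conformal field of \eqref{eq: X}, with factor $\alpha=\lambda'$ and $\overline{\mathrm{div}}\,X=n\alpha$; let $c$ denote the constant value of $\sigma_k f(r)$ (case (1)) or $\sigma_k f(u)$ (case (2)). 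First I would show $c>0$. In case (2) this is immediate from convexity. In case (1), at a point $p\in\Sigma$ of maximal distance from $O$ the outward normal is $\partial_r$ and $\Sigma$ is internally tangent to the geodesic sphere about $O$ through $p$; hence every principal curvature of $\Sigma$ at $p$ dominates the (positive) principal curvature of that geodesic sphere, so $\sigma_k(p)>0$ and $c=\sigma_k(p)f(r(p))>0$ since $f>0$. By \cite{marques1997stability} Proposition 3.2 it follows that $\sigma_j>0$ for $0\le j\le k$ and $T_j>0$ for $0\le j\le k-1$, so $f=c/\sigma_k$ is a well-defined positive function on $\Sigma$.

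Next is the upper bound. Applying the relevant proposition (first identity in case (1), second identity in case (2)) at level $k-1$ with the weight $f$, the ``$\sigma_k$''-term on the right becomes $\int_\Sigma f\sigma_k\,X\cdot\nu=c\int_\Sigma X\cdot\nu=cn\int_\Omega\alpha$ (divergence theorem, using that $\Sigma$ is embedded and $\overline{\mathrm{div}}\,X=n\alpha$), while the remaining integral $\frac{1}{(m-k+1)\binom{m}{k-1}}\int_\Sigma\frac{f'(r)}{\lambda}\langle T_{k-1}(X^T),X^T\rangle$ in case (1), resp. the analogous term $\propto\int_\Sigma f'(u)\langle T_{k-1}A^\nu(X^T),X^T\rangle$ in case (2), is $\ge 0$: indeed $f'\ge 0$, $\lambda>0$ off $O$, $T_{k-1}>0$, and in case (2) also $T_{k-1}A^\nu>0$ since $A^\nu>0$. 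Hence in both cases $\int_\Sigma\alpha f\sigma_{k-1}\le cn\int_\Omega\alpha$.

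Since $f=c/\sigma_k$, the left side equals $c\int_\Sigma\alpha\,\sigma_{k-1}/\sigma_k$. For the matching lower bound I would iterate Newton's inequalities on the positive cone to get $\sigma_{k-1}/\sigma_k\ge 1/\sigma_1$ pointwise, and then invoke the Heintze--Karcher inequality for the conformal field $X$ in the space form $M$ (Montiel--Ros \cite{montiel1991compact}, cf.\ \cite{ros1987compact}), $\int_\Sigma\alpha/\sigma_1\ge n\int_\Omega\alpha$, with equality exactly for geodesic spheres. This gives $\int_\Sigma\alpha\,\sigma_{k-1}/\sigma_k\ge n\int_\Omega\alpha$, so comparison with the previous step forces equality throughout, in particular equality in Heintze--Karcher, whence $\Sigma$ is a geodesic sphere. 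On such a $\Sigma$ the curvature $\sigma_k$ is a positive constant, so $f(r)$ (case (1)), resp. $f(u)$ (case (2)), is constant on $\Sigma$; if $f$ is injective, $r$, resp. $u$, is then constant on $\Sigma$. Constancy of $r$ says $\Sigma$ is centered at $O$; constancy of $u$ gives $0=\nabla u=A^\nu(X^T)$ (using $\nabla(X\cdot\nu)=A^\nu(X^T)$ as in the proof of Proposition \ref{prop: euc1}), hence $X^T=0$, hence $\nabla r=0$, and again $\Sigma$ is centered at $O$.

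The step I expect to be the main obstacle is pinning down the exact form of the weighted Heintze--Karcher inequality valid uniformly across $\mathbb R^n$, $\mathbb H^n$ and the \emph{open} hemisphere $\mathbb S^n_+$, together with its rigidity case: one needs the equality case to be a geodesic sphere of arbitrary center (this is what makes the ``centered at $O$ only if $f$ injective'' clause both necessary and sharp), and the restriction to the hemisphere is precisely what is needed for the inequality to survive in the spherical setting. Everything else — the substitution into the Minkowski formulas, the sign bookkeeping on the remainder term, and the final determination of the center — is routine.
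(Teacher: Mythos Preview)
Your proposal is correct and follows essentially the same route as the paper: apply the weighted Minkowski identity at level $k-1$ to get $\int_\Sigma \alpha f\sigma_{k-1}\le cn\int_\Omega\alpha$, then combine Newton's inequalities with the Heintze--Karcher-type bound $\int_\Sigma \alpha/\sigma_1\ge n\int_\Omega\alpha$ (the paper cites \cite{ros1987compact} and \cite{brendle2013constant} for this) to force equality and hence umbilicity. Your handling of the ``centered at $O$'' clause is in fact cleaner than the paper's, which runs a slightly indirect argument via the furthest and nearest points.
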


\begin{proof}
  Assume first $\sigma_k f(r)$ is constant. Since $\Sigma$ has an elliptic point (i.e. point at which $A^\nu$ is definite,  cf. \cite{marques1997stability}), $\sigma_kf(r)$ must be positive and hence $\sigma_k>0$.
  By \cite{marques1997stability} Proposition 3.2,  then $T_j$ is positive definite and $\sigma_j>0$ for $0\le j<k$. Let
  \begin{equation}\label{eq: alpha}
    \alpha=\begin{cases}
  1\quad &\textrm{if }M=\mathbb{R}^n\\
  \cos r \quad &\textrm{if }M=\mathbb{S}^n_+\\
  \cosh r\quad &\textrm{if }M=\mathbb{H}^n,
\end{cases}
  \end{equation}
so by Proposition \ref{prop: euc1}, \ref{prop: sphere2} or \ref{prop: hyper2}, we have
  $$ \int_\Sigma\alpha \sigma_{k-1}f(r)
  \le \int_\Sigma \sigma_k f(r) X\cdot \nu
  =\sigma_k f(r) \int_\Sigma X\cdot \nu
  = \sigma_k f(r)n \int_  \Omega \alpha, $$
  where $\Omega$ is the region bounded by $\Sigma$.
  Therefore, by Newton's inequality,
  $$n\int_\Omega \alpha
  \ge \int_\Sigma\alpha\frac{\sigma_{k-1}}{\sigma_k}
   \ge\int_\Sigma \alpha\frac{\sigma_{k-2}}{\sigma_{k-1}} \ge\cdots
  \ge\int_\Sigma \alpha\frac{1}{\sigma_1}.$$
  On the other hand, by \cite{ros1987compact} Theorem 1 or  \cite{brendle2013constant} Theorem 3.5, we have
  $$\int_\Sigma \frac{\alpha}{\sigma_1}\ge n \int_\Omega \alpha,$$
  and the equality holds if and only if $\Sigma$ is  a geodesic sphere by \cite{ros1987compact} Theorem 1  or  \cite{brendle2013constant} Theorem 3.5 again.

   If $f$ is injective, then $\langle X, \nu\rangle$ is a positive constant as $\sigma_k$ is constant. In particular, $\Sigma$ is star-shaped w.r.t. $O$, and the furthest point $p_1$ and the nearest point $p_2$ from $O$ satisfy $\langle X, \nu\rangle (p_1)= \langle X, \nu\rangle(p_2)$. We conclude that $\Sigma$ is centered at $O$. The remaining case can be proved similarly.
\end{proof}

In the case where $\Sigma$ is immersed in a simply connected space form, we have the following generalization of the results in \cite{koh2001addendum, koh2000sphere}.
\begin{corollary}\label{cor: alex2}
Let $(M,g)$ be $\mathbb{R}^n$, $\mathbb{H}^n$, or $\mathbb{S}^n_+$. Let $r$ be the distance on $M$ from a fixed point $O\in M$, taken to be the center if $M=\mathbb{S}^n_+$.
  Suppose $\Sigma$ is a closed oriented hypersurface immersed in $M$. Assume $f>0$, $f'\ge 0$ and there exists $1\le l<k \le n-1$ such that
  $\frac{f(r)\sigma_{k} }{\sigma_{l}}$ is constant.
  Then $\Sigma$ is a geodesic sphere.
\end{corollary}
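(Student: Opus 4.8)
The plan is to reduce this to the situation of Corollary \ref{cor: alex} by the same circle of ideas: produce a weighted Hsiung--Minkowski inequality, chain it with Newton's inequalities, and then close the loop with the Ros/Brendle--Eichmair-type Heintze--Karcher inequality to force equality, which characterizes geodesic spheres. First I would note that, since $\Sigma$ is a closed immersed hypersurface in a simply connected space form, it has an elliptic point (cf. \cite{marques1997stability}), so at that point $\sigma_k>0$; combined with $f>0$ this forces the constant $c:=\frac{f(r)\sigma_k}{\sigma_l}$ to be positive, and since $\sigma_l$ cannot vanish (it would have to change sign, contradicting $c>0$ and $\sigma_k$ being well-defined with $T_{k-1}$ definite at the elliptic point via \cite{marques1997stability} Proposition 3.2), we get $\sigma_l>0$ everywhere, hence $\sigma_j>0$ and $T_j>0$ for all $0\le j\le l$, and more: $\sigma_k>0$ everywhere so $T_j>0$ for $0\le j<k$.

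Next I would apply Proposition \ref{prop: euc1}, \ref{prop: sphere2}, or \ref{prop: hyper2} (according to whether $M$ is $\mathbb{R}^n$, $\mathbb{S}^n_+$, or $\mathbb{H}^n$) with the weight $f(r)$ in the slot ``$f$'', using that the $T_k$-term has a favorable sign because $f'\ge 0$ and $T_{k-1}>0$. This yields
\[
\int_\Sigma \alpha\, \sigma_{k-1} f(r) \le \int_\Sigma \sigma_k f(r)\, X\cdot\nu,
\]
with $\alpha$ as in \eqref{eq: alpha} and $X$ as in \eqref{eq: X}. Now substitute $f(r)\sigma_k = c\,\sigma_l$ on the right-hand side and, on the left, keep $f(r)\sigma_{k-1}$; using the pointwise Newton inequalities $\frac{\sigma_{j-1}}{\sigma_j}\ge \frac{\sigma_j}{\sigma_{j+1}}$ (valid since all $\sigma_j>0$ up through index $k$), together with $f(r)\sigma_k=c\sigma_l$, I would telescope
\[
\int_\Sigma \alpha\, c\,\sigma_l \;\ge\; \int_\Sigma \alpha\, f(r)\,\frac{\sigma_{k-1}}{\sigma_k}\,\sigma_k \;=\; \cdots,
\]
carefully rearranging so as to compare $\int_\Sigma \alpha\, c\,\sigma_l / \sigma_l^{?}$-type quantities; the cleanest route is to divide the Minkowski inequality by $\sigma_k$ and use $\sigma_{k-1}/\sigma_k\ge\cdots\ge \sigma_l/\sigma_{l+1}\ge\cdots\ge 1/\sigma_1$ as in Corollary \ref{cor: alex}, arriving at $n\int_\Omega\alpha \ge \int_\Sigma \frac{\alpha}{\sigma_1}$ after using $\int_\Sigma X\cdot\nu = n\int_\Omega\alpha$. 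Then the Heintze--Karcher-type inequality $\int_\Sigma \frac{\alpha}{\sigma_1}\ge n\int_\Omega\alpha$ (\cite{ros1987compact} Theorem 1 or \cite{brendle2013constant} Theorem 3.5) forces equality throughout, and the rigidity statement there gives that $\Sigma$ is a geodesic sphere.

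The main obstacle I anticipate is the bookkeeping in the telescoping step: because the constant ratio involves two indices $l<k$ rather than a single $\sigma_k$ being constant, one must be careful that every intermediate ratio $\sigma_{j-1}/\sigma_j$ used is legitimate (needs $\sigma_j>0$ for $0\le j\le k$, which the elliptic-point argument supplies) and that the weight $f(r)$ is correctly carried through each application of Newton's inequality without spoiling a sign—in particular one cannot simply pull $f(r)$ out of the integral since it is not constant. A secondary subtlety is that $\Sigma$ is only \emph{immersed}, not embedded, so one should phrase the volume term $\int_\Omega\alpha$ via the divergence theorem for the map (as in the parenthetical remarks of Corollary \ref{cor: Rpq} and the proof of Corollary \ref{cor: alex}) and invoke the immersed version of the Heintze--Karcher inequality, which is precisely why \cite{ros1987compact} and \cite{brendle2013constant} are cited rather than an embedded-only statement.
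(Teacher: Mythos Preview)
Your approach has a genuine gap: you ultimately rely on a Heintze--Karcher inequality $\int_\Sigma \alpha/\sigma_1 \ge n\int_\Omega \alpha$ together with its rigidity, but the results in \cite{ros1987compact} and \cite{brendle2013constant} are for \emph{embedded} hypersurfaces, and Corollary~\ref{cor: alex2} is stated for \emph{immersed} $\Sigma$. (Contrast with Corollary~\ref{cor: alex}, which does assume embeddedness precisely because it goes through Heintze--Karcher.) There is no ``immersed Heintze--Karcher'' available here that would force the rigidity you need; the region $\Omega$ is not even well defined. Your telescoping sketch is also too vague to see through: once the non-constant weight $f(r)$ sits inside the integral you cannot simply divide by $\sigma_k$ pointwise and chain Newton down to $1/\sigma_1$.

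The paper's argument avoids Heintze--Karcher entirely and is shorter. After the same first step
\[
\int_\Sigma \alpha\, f\,\sigma_{k-1} \;\le\; \int_\Sigma f\,\sigma_k\, X\cdot\nu \;=\; a\int_\Sigma \sigma_l\, X\cdot\nu,
\]
where $a=f\sigma_k/\sigma_l$ is the constant, the key observation is to apply the \emph{unweighted} Hsiung--Minkowski identity at level $l$ (put $f\equiv 1$ in Theorem~\ref{thm: main}), which gives the exact equality $\int_\Sigma \sigma_l\, X\cdot\nu = \int_\Sigma \alpha\,\sigma_{l-1}$. Then the pointwise Newton inequality $\sigma_{l-1}\sigma_k \le \sigma_l\sigma_{k-1}$ yields $a\,\sigma_{l-1} \le f\,\sigma_{k-1}$, so
\[
a\int_\Sigma \alpha\,\sigma_{l-1} \;\le\; \int_\Sigma \alpha\, f\,\sigma_{k-1},
\]
closing the loop. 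Equality in Newton's inequality forces $\Sigma$ to be totally umbilic, hence a geodesic sphere. No volume term, no embeddedness, no Heintze--Karcher needed---the second index $l$ is exactly what lets you return via an unweighted Minkowski identity instead.
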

\begin{proof}
Let $\alpha$ and $X$ be defined by \eqref{eq: alpha} and \eqref{eq: X} respectively.
Since there exists an elliptic point on $\Sigma$,  by inverting the normal if necessary,  from \cite{marques1997stability} Proposition 3.2, $\sigma_{j+1}>0$ and $T_j>0$ for all $j< k$. Assume $l \ge 1$, then by Newton's inequality, we have
\begin{equation}\label{ineq: newton}
  0<a=\frac{f\sigma_{k}}{\sigma_{l}}\le \frac{f\sigma_{k-1}}{\sigma_{l-1}}.
\end{equation}
By \eqref{ineq: newton} and applying Proposition \ref{prop: euc1}, \ref{prop: sphere2} or \ref{prop: hyper2}, we have
  \begin{align*}
    \int_\Sigma \alpha f\sigma_{k-1}
    \le\int_\Sigma f\sigma_{k} X\cdot \nu
    =a\int_\Sigma \sigma_{l} X\cdot \nu
    =a\int_\Sigma \alpha \sigma_{l-1}
    \le\int_\Sigma \alpha f\sigma_{k-1}.
  \end{align*}
  We conclude that \eqref{ineq: newton} is an equality and so $\Sigma$ is totally umbilic. Therefore $\Sigma$ is a sphere (\cite{alencar1993first}).
\end{proof}

We also have the following partial extension of a result of Koh \cite{koh1998characterization}:
\begin{corollary}\label{cor: koh}
  Suppose $(M,g)$ is an $n$-dimensional Einstein manifold ($n\ge 3$) which possesses a conformal vector field $X$ with $\overline {\mathrm{div}}X>0$. Let $\Sigma$ be a closed oriented hypersurface immersed in $M$ with at least one elliptic point. Assume that $\frac{\sigma_2}{\sigma_1}$ is constant on $\Sigma$. Then $\Sigma$ is totally umbilic. Indeed $\sigma_1$ (and hence $\sigma_2$) is constant and $A^\nu= \sigma_1\langle \cdot,\cdot\rangle$ where $\nu$ is the unit normal of $\Sigma$.
\end{corollary}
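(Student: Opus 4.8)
The plan is to feed $f\equiv 1$ and $k=0,1$ into the hypersurface Hsiung--Minkowski formula of Theorem \ref{thm: main}(2), and to combine the two resulting identities with the elementary pointwise bound $\sigma_2\le\sigma_1^2$. The latter is Newton's inequality $\sigma_0\sigma_2\le\sigma_1^2$, equivalently $|A^\nu-\sigma_1 I|^2=(n-1)(n-2)(\sigma_1^2-\sigma_2)\ge 0$, with equality exactly at umbilic points; here $n\ge 3$ is used both so that $(n-1)(n-2)>0$ and so that the index $k=1$ is admissible in Theorem \ref{thm: main}, which requires $n-1-k\ge 1$.

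First I would pin down the sign of $\sigma_1$. Because $\sigma_2/\sigma_1$ is assumed to be a well-defined constant on $\Sigma$, the function $\sigma_1$ is nowhere zero, hence of constant sign on the connected manifold $\Sigma$. Replacing $\nu$ by $-\nu$ if necessary --- which leaves $\sigma_2$ unchanged and flips $\sigma_1$, so the ratio stays constant --- I may assume $A^\nu$ is positive definite at an elliptic point; then $\sigma_1>0$ and $\sigma_2>0$ there, so $\sigma_1>0$ on all of $\Sigma$ and $c:=\sigma_2/\sigma_1>0$. Feeding $\sigma_2=c\sigma_1$ into $\sigma_2\le\sigma_1^2$ and dividing by $\sigma_1>0$ gives the pointwise inequality $\sigma_1\ge c$.

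Next I would apply Theorem \ref{thm: main}(2) with $f\equiv 1$, so that the $\langle T_k(\nabla f),X^T\rangle$ term vanishes. The case $k=0$ needs no curvature hypothesis and yields $\int_\Sigma\alpha=\int_\Sigma\sigma_1\,\nu\cdot X$, where $\alpha=\tfrac1n\overline{\mathrm{div}}X>0$ by assumption. The case $k=1$ uses that $M$ is Einstein and yields $\int_\Sigma\alpha\sigma_1=\int_\Sigma\sigma_2\,\nu\cdot X=c\int_\Sigma\sigma_1\,\nu\cdot X=c\int_\Sigma\alpha$. Hence $\int_\Sigma\alpha(\sigma_1-c)=0$; since $\alpha>0$ and $\sigma_1-c\ge 0$, the integrand is nonnegative with vanishing integral, so $\sigma_1\equiv c$. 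Then $\sigma_2=c\sigma_1=c^2=\sigma_1^2$, so $|A^\nu-\sigma_1 I|^2\equiv 0$: the surface $\Sigma$ is totally umbilic with $A^\nu=\sigma_1\langle\cdot,\cdot\rangle=c\langle\cdot,\cdot\rangle$, and $\sigma_1=c$, $\sigma_2=c^2$ are constant.

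The only step that needs care is the first one. The strict positivity of $\sigma_1$, hence of the constant $c$, is exactly what converts Newton's inequality into the one-sided bound $\sigma_1\ge c$ that drives the argument, and it relies both on the nondegeneracy $\sigma_1\ne 0$ implicit in the hypothesis and on the elliptic point, which alone determines the sign. Everything else is a direct consequence of the two Minkowski identities and $\sigma_2\le\sigma_1^2$; in particular no embeddedness is needed, and nothing about $M$ is used beyond its being Einstein and carrying a conformal field of positive divergence.
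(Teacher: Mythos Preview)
Your proof is correct and follows essentially the same route as the paper's: both combine the $k=0$ and $k=1$ Hsiung--Minkowski identities from Theorem~\ref{thm: main}(2) with Newton's inequality $\sigma_2\le\sigma_1^2$ to force equality. Your version is in fact slightly more streamlined, since from $\int_\Sigma\alpha(\sigma_1-c)=0$ you read off $\sigma_1\equiv c$ directly, whereas the paper first concludes umbilicity and then appeals to $\mathrm{div}\,T_1=0$ to deduce that $\sigma_1$ is constant.
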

\begin{proof}
  Since there exists an elliptic point on $\Sigma$, $\frac{\sigma_2}{\sigma_1}=a>0$. By a result of Garding \cite{garding1959inequality}, $\sigma_1>0$ and we have the inequality
  \begin{equation}\label{ineq: garding}
    0<a=\frac{\sigma_2}{\sigma_1}\le \frac{\sigma_1}{\sigma_0}.
  \end{equation}
  By Theorem \ref{thm: main}, we have
  \begin{align*}
    \int_\Sigma \alpha\sigma_1=\int_\Sigma \sigma_2 X\cdot \nu= a\int_\Sigma \sigma_1 X\cdot \nu= a \int_\Sigma \alpha \sigma_0\le \int_\Sigma \alpha \sigma_1.
  \end{align*}
  We conclude that \eqref{ineq: garding} becomes an equality, and thus $\Sigma$ is totally umbilic. We then have $A^\nu= \sigma_1 \langle \cdot, \cdot\rangle$, which is equivalent to $T_1-(n-2)\sigma_1 I=0$ by \eqref{eq: T}. By taking the divergence and using Lemma \ref{lem: identities}, we conclude that $\sigma_1$ is constant, and so is $\sigma_2$. (See also \cite{koiso1981hypersurfaces} Proposition 2.)
  \end{proof}

We have an analogue of Corollary \ref{cor: alex2} for $dS_n$, which partially generalizes \cite{aledo1999integral} Theorem 3:
\begin{corollary}\label{cor: alex3}
  Let $\Sigma$ be a closed oriented $(n-1)$-dimensional semi-Riemannian manifold isometrically immersed in $dS_n^+=dS_n \cap \{x_{n+1}>0\}$ and $r$ be defined as in \eqref{eq: dSn}. Assume $f>0$, $f'\ge 0$ and there exists $1\le l<k \le n-1$ such that
  $\frac{f(r)\sigma_{k} }{\sigma_{l}}$ is constant.
  Then $\Sigma$ is a totally umbilic round sphere. If $f$ is injective, then $\Sigma$ is a slice $\{r=\textrm{constant}\}$.
\end{corollary}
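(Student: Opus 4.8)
The plan is to run the argument of Corollary~\ref{cor: alex2} in the de Sitter setting, with the Minkowski-type formula of Corollary~\ref{cor: dS2} taking over the role of Propositions~\ref{prop: euc1}, \ref{prop: sphere2}, \ref{prop: hyper2}, and to close with the classification of closed totally umbilic spacelike hypersurfaces of $dS_n$. Set $Y=\cosh r\,\partial_r$. By \eqref{eq: dSn} the conformal factor of $Y$ on $dS_n$ is $Z_0\cdot X=\sinh r$, and on $dS_n^+=dS_n\cap\{x_{n+1}>0\}$ one has $x_{n+1}=\sinh r>0$, so this $\sinh r$ plays the role of $\alpha$ in \eqref{eq: alpha}. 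Write $a$ for the constant value of $f(r)\sigma_k/\sigma_l$.

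First I would supply the elliptic-point input: a closed spacelike hypersurface of $dS_n$ possesses an elliptic point, i.e.\ a point at which $A^\nu$ is definite (this is classical; cf.\ \cite{aledo1999integral}). After reversing $\nu$ we may take $A^\nu$ positive definite there, so every $\sigma_j$ is positive at that point, whence $a=f(r)\sigma_k/\sigma_l>0$; then, exactly as in Corollary~\ref{cor: alex2}, \cite{marques1997stability} Proposition~3.2 gives $\sigma_j>0$ and $T_j>0$ on all of $\Sigma$ for $0\le j<k$, and in particular $\sigma_k>0$.

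The main step is the integral estimate. Since $\sigma_0,\dots,\sigma_k>0$, Newton's inequality gives pointwise $0<a=f\sigma_k/\sigma_l\le f\sigma_{k-1}/\sigma_{l-1}$. Applying Corollary~\ref{cor: dS2} once with exponent $k-1$ and $f=f(r)$, and once with $f\equiv1$ and exponent $l-1$, and using $f'\ge0$, $\cosh r>0$, $T_{k-1}>0$ together with the spacelikeness of $\Sigma$ (so that the induced metric is Riemannian and $\langle T_{k-1}(Y^T),Y^T\rangle\ge0$), I would obtain
\begin{equation*}
\int_\Sigma \sinh r\,f\sigma_{k-1} \le \int_\Sigma f\sigma_k\,\nu\cdot Y = a\int_\Sigma \sigma_l\,\nu\cdot Y = a\int_\Sigma \sinh r\,\sigma_{l-1} \le \int_\Sigma \sinh r\,f\sigma_{k-1},
\end{equation*}
the last step using the pointwise inequality and $\sinh r>0$. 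Hence all inequalities are equalities; in particular $f\sigma_{k-1}=a\sigma_{l-1}$ pointwise, so $\sigma_k/\sigma_l=\sigma_{k-1}/\sigma_{l-1}$ on $\Sigma$, and the equality case of Newton's inequality forces $\Sigma$ to be totally umbilic. A closed totally umbilic spacelike hypersurface of the constant-curvature space $dS_n$ is a round sphere with constant principal curvature $\lambda$ (classical rigidity; cf.\ \cite{alencar1993first}), and $\lambda^k=\sigma_k>0$ gives $\lambda\ne0$, so $\sigma_k=\lambda^k$ and $\sigma_l=\lambda^l$ are constant. If in addition $f$ is injective, then $f(r)=a\lambda^{l-k}$ is constant on $\Sigma$, hence $r$ is constant and $\Sigma$ is a slice $\{r=\textrm{constant}\}$.

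The computation is essentially a transcription of the space-form case; the genuinely delicate points are the two Lorentzian ones. The first is confirming that a closed spacelike hypersurface contained in $dS_n^+$ really has an elliptic point, so that \cite{marques1997stability} Proposition~3.2 is available. The second is checking that the sign of the $\langle T_{k-1}(Y^T),Y^T\rangle$-term in Corollary~\ref{cor: dS2} is the one compatible with the displayed chain of inequalities, despite $\partial_r$ being timelike — this is precisely where the hypothesis that $\Sigma$ is spacelike enters, since it makes $T\Sigma$ Riemannian and $T_{k-1}$ a positive operator on it.
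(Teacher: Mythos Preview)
Your integral argument is the same as the paper's: both transplant the proof of Corollary~\ref{cor: alex2} to $dS_n^+$ via Corollary~\ref{cor: dS2} and Newton's inequality. There is, however, a genuine gap. You repeatedly invoke ``the spacelikeness of $\Sigma$'' and in your final paragraph call it a \emph{hypothesis}, but the statement only assumes $\Sigma$ is semi-Riemannian. The paper supplies this: at the minimum (or maximum) of $r$ on the closed manifold $\Sigma$, the tangent space $T_p\Sigma$ lies in the tangent space of an $r$-slice, which is Riemannian; since the induced metric is non-degenerate its signature is locally constant, so $\Sigma$ is spacelike everywhere. Without this step your appeal to \cite{marques1997stability} Proposition~3.2 (a Riemannian statement) and the sign $\langle T_{k-1}(Y^T),Y^T\rangle\ge 0$ are not justified. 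The paper also produces the elliptic point concretely---at the minimum of $r$, comparison with the $r$-slice gives every principal curvature $\le -\tanh r<0$---rather than citing it; your reference \cite{aledo1999integral} does contain such a statement, but only under a standing spacelikeness assumption, so it does not close the loop on its own.

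The endgames differ. After obtaining total umbilicity you cite a classification of closed totally umbilic spacelike hypersurfaces of $dS_n$; note that \cite{alencar1993first} is a Riemannian paper and does not cover this. The paper instead argues internally: total umbilicity in a constant-curvature ambient forces $\sigma_1$ constant (the divergence argument of Corollary~\ref{cor: koh}), the Gauss equation with $\nu\cdot\nu=-1$ gives $\Sigma$ normalized scalar curvature $<1$, and then \cite{cheng1998spacelike} yields the round sphere. Your route is shorter if one has the Lorentzian classification at hand; the paper's route is self-contained relative to its own earlier results plus one external citation.
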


\begin{proof}
First of all, there exists an elliptic point on $\Sigma$. Indeed, take the point $p$ where $r$ is minimum, then by comparison principle, each of the principal curvatures is not more than $-\tanh r$, the principal curvature of the $r$-slice. Using Corollary \ref{cor: dS}, we can then proceed as in the proof of Corollary \ref{cor: alex2} to show that $\Sigma$ is totally umbilic. As $dS_n$ has constant curvature, the argument in the proof of Corollary \ref{cor: koh} then shows that $\sigma_1$ is a positive constant. By considering the point where $r$ is maximum, it is easy to see that $\Sigma$ is spacelike. So by the Gauss equation, as $dS_n$ has constant curvature $1$, the normalized scalar curvature of $\Sigma$ is less than $1$ (note $\nu\cdot \nu=-1$). By \cite{cheng1998spacelike}, we then conclude that $\Sigma$ is isometric to a sphere. Clearly if $f$ is injective, then $f(r)$ being constant implies $\Sigma=\{r=\textrm{constant}\}$.
\end{proof}

\subsection{Estimates for eigenvalues}

We now give a generalization of a result of Grosjean  on the upper bound of the Laplacian eigenvalue of a hypersurface, using our formulas.
\begin{theorem}(cf. \cite[Theorem 3.1]{grosjean2002upper})\label{thm: lambda1}
  Let $\Sigma$ be a closed $(n-1)$-dimensional Riemannian manifold immersed in a simply connected $n$-dimensional space form $M_K$ of curvature $K=0, \pm 1$. Suppose $\sigma_{k+2}>0$ for some $k\ge 0$. Assume in addition that the image of $\Sigma$ is contained in a geodesic ball of radius $\frac{\pi}{4}$ if $K=1$. Then
  $$\lambda_1(T_k)\le (m-k){m\choose k}\max_\Sigma\left(K \sigma_k+ \sigma_{k+2}\right) $$
  where $m=n-1$ and $\lambda_1(T_k)$ is the first eigenvalue the (positive if $\sigma_{k+2}>0$) second order differential operator $-\mathrm{div}(T_k\circ\nabla )=-\langle T_k, \nabla ^2\cdot\rangle$ on $\Sigma$. The equality holds if and only if $\Sigma$ is immersed as a geodesic sphere. (Note that $\lambda_1(T_0)$ is just the first Laplacian eigenvalue.)
\end{theorem}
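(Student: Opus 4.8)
The plan is to use the min–max description of $\lambda_1(T_k)$ together with a family of test functions built from the linear coordinate functions of the flat model of $M_K$, and to evaluate the resulting Rayleigh quotients by means of the Hsiung–Minkowski formulas proved above.

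First I would check that $\Sigma$ carries an elliptic point: in $\mathbb R^n$ and $\mathbb H^n$, a point of $\Sigma$ at maximal distance from a fixed point; in $\mathbb S^n$, a point where the image of $\Sigma$ is internally tangent to the smallest geodesic ball containing it, the restriction to radius $\tfrac\pi4$ being what keeps the comparison quantities positive. Reorienting $\nu$ so that $A^\nu$ is positive definite there, the hypothesis $\sigma_{k+2}>0$ and Garding's theory (as used in \cite{marques1997stability} Proposition 3.2) give $\sigma_0,\dots,\sigma_{k+2}>0$ and $T_0,\dots,T_{k+1}>0$ on all of $\Sigma$. Hence $L_k=-\mathrm{div}(T_k\circ\nabla)=-\langle T_k,\nabla^2\cdot\rangle$ is a positive elliptic operator and
$$\lambda_1(T_k)=\min\Big\{\int_\Sigma\langle T_k\nabla\varphi,\nabla\varphi\rangle\ :\ \varphi\in C^\infty(\Sigma),\ \int_\Sigma\varphi=0,\ \int_\Sigma\varphi^2=1\Big\}.$$

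Next, realize $M_K$ inside a flat ambient space: $\mathbb R^n$ when $K=0$, and $M_K=M_{p,q}(\mu)\subset\mathbb R^{p,q}$ (with $K=\mu$) when $K=\pm1$. Let $\{E_i\}$ be the standard pseudo-orthonormal basis, $E_i\cdot E_j=\varepsilon_i\delta_{ij}$, and let $X$ be the position vector of the immersion in the flat ambient space. As test functions I take $\psi_i=E_i\cdot X$, shifted by constants so that $\int_\Sigma\psi_i=0$ (this changes neither $\nabla\psi_i$ nor $L_k\psi_i$); on a geodesic sphere these restrict to first Laplacian eigenfunctions, which is why the bound should end up sharp there. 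The crucial computation is that, since $M_K$ is totally umbilic in the ambient space with $A_{M_K}(U,V)=\mu\langle U,V\rangle X$, the $M_K$-Hessian of $\psi_i$ is $-K\psi_i\,g$; restricting to the hypersurface $\Sigma\subset M_K$ and using $X\cdot\nu=0$ one gets the $(1,1)$-tensor identity $\nabla^2_\Sigma\psi_i=-K\psi_i\,\mathrm{Id}-(E_i\cdot\nu)A^\nu$. Combined with $\mathrm{div}\,T_k=0$ (Lemma \ref{lem: identities}, which for $k=1$ only requires $M$ Einstein, matching the statement) and Lemma \ref{lem: identities}(1),(3) this gives
$$L_k\psi_i=K\psi_i\,\mathrm{tr}(T_k)+(E_i\cdot\nu)\langle T_k,A^\nu\rangle=(m-k)\tbinom mk\big(K\psi_i\sigma_k+(E_i\cdot\nu)\sigma_{k+1}\big).$$
Feeding $\psi_i$ into the Rayleigh quotient, integrating by parts and summing over $i$ (weighted by $\varepsilon_i$), and using $\sum_i\varepsilon_i\psi_i^2=X\cdot X$ and $\sum_i\varepsilon_i(E_i\cdot X)(E_i\cdot\nu)=X\cdot\nu$, yields an inequality of the form
$$\lambda_1(T_k)\int_\Sigma|X-C|^2\ \le\ (m-k)\tbinom mk\int_\Sigma\big(\text{lower-order curvature terms}\big),$$
where $C=\tfrac1{\mathrm{Area}}\int_\Sigma X$ is the centre of mass.

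The remaining step — which I expect to be the real obstacle — is to bound the right-hand side by $(m-k)\binom mk\max_\Sigma(K\sigma_k+\sigma_{k+2})\int_\Sigma|X-C|^2$. Here I would translate so that $C=0$ (so the denominator is exactly $\int_\Sigma|X|^2$), then re-express the numerator by applying the Hsiung–Minkowski formula of Theorem \ref{thm: main} one level higher, at $k+1\to k+2$, with the test function $f=X\cdot\nu$ (whose $\Sigma$-gradient is $A^\nu(X^T)$); this replaces the term $\int_\Sigma\sigma_{k+1}(X\cdot\nu)$ by $\int_\Sigma\sigma_{k+2}(X\cdot\nu)^2$ plus an error proportional to $\int_\Sigma\langle T_{k+1}A^\nu(X^T),X^T\rangle$. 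One then controls this error and the discrepancy $|X|^2-(X\cdot\nu)^2=|X^T|^2$ using the positivity of $T_0,\dots,T_{k+1}$, Newton's inequalities, and Cauchy–Schwarz. The delicate point is to match the combinatorial constants (e.g. $(m-k)\binom mk$ against $(k+2)\binom m{k+2}$) so that the error is absorbed into the $|X^T|^2$-part of $\int_\Sigma|X|^2$ with the sharp coefficient $\max_\Sigma(K\sigma_k+\sigma_{k+2})$; this is also where equality forces $\nabla^2\psi_i=-K\psi_i\,\mathrm{Id}$, i.e. $A^\nu$ umbilic, whence (taking a divergence and using Lemma \ref{lem: identities}) $\sigma_1$ is constant and $\Sigma$ is a geodesic sphere.
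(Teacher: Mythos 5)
Your overall architecture (min--max with coordinate test functions, then Hsiung--Minkowski to process the Rayleigh quotient) is the right one, and your positivity discussion and the identity $L_k\psi_i=(m-k)\binom{m}{k}\big(K\psi_i\sigma_k+(E_i\cdot\nu)\sigma_{k+1}\big)$ are fine. But the step you yourself flag as ``the real obstacle'' is a genuine gap, and your setup for $K=\pm1$ makes it worse rather than better. If you use all $n+1$ ambient coordinates and sum with weights $\varepsilon_i$, then on $M_{p,q}(\mu)$ you have $\sum_i\varepsilon_i\psi_i^2=X\cdot X=\mu$ and $\sum_i\varepsilon_i\psi_i(E_i\cdot\nu)=X\cdot\nu=0$ (since $\nu\in TM_K\perp X$), so your inequality degenerates to $\lambda_1(T_k)\,(1-|C|^2)\,\mathrm{Area}(\Sigma)\le(m-k)\binom{m}{k}\int_\Sigma\sigma_k$: the $\sigma_{k+2}$ term disappears entirely and the whole difficulty is pushed into controlling the centre of mass $|C|$, which your sketch does not address. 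Moreover, your proposed repair --- applying the weighted Minkowski formula at level $k+1\to k+2$ with $f=X\cdot\nu$ --- produces the correction term $\int_\Sigma\langle T_{k+1}A^\nu(X^T),X^T\rangle$, and $T_{k+1}A^\nu$ is \emph{not} known to be positive semidefinite under the hypothesis $\sigma_{k+2}>0$ (only $T_j>0$ for $j\le k+1$ is available), so the sign of that error is uncontrolled.

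The paper closes both gaps by a different choice of test functions and a different weight. One applies a rigid motion of $M_K$ so that only the $n$ \emph{spatial} coordinates have zero mean, takes $\psi_i=x^i$ for $i=1,\dots,n$ only, so the denominator is exactly $\int_\Sigma\sum_{i=1}^n(x^i)^2=\int_\Sigma s_K^2$; the missing $x^0$-term is added and subtracted, giving the numerator $\int_\Sigma\big(\mathrm{tr}\,T_k-K\langle T_k(\nabla c_K),\nabla c_K\rangle\big)$. The second term is converted by the Minkowski formula with the \emph{radial} weight $f=c_K$ (using $Y^T=-K\nabla c_K$) into $(m-k)\binom{m}{k}\int_\Sigma(\sigma_kc_K^2-c_K\sigma_{k+1}Y\cdot\nu)$, whence $\lambda_1(T_k)\int_\Sigma s_K^2\le(m-k)\binom{m}{k}\big(K\int_\Sigma\sigma_ks_K^2+\int_\Sigma c_K\sigma_{k+1}Y\cdot\nu\big)$. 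The decisive chain is then
$$\int_\Sigma c_K\sigma_{k+1}\,Y\cdot\nu\ \le\ \int_\Sigma s_Kc_K\sigma_{k+1}\ \le\ \int_\Sigma s_K\sigma_{k+2}\,Y\cdot\nu\ \le\ \int_\Sigma s_K^2\sigma_{k+2},$$
where the outer inequalities use $Y\cdot\nu\le|Y|=s_K$ and the positivity of $\sigma_{k+1},\sigma_{k+2}$, and the middle one is Proposition \ref{prop: euc1}, \ref{prop: sphere2} or \ref{prop: hyper2} with $f=s_K(r)$, whose correction term $-c\int_\Sigma\frac{c_K}{s_K}\langle T_{k+1}(Y^T),Y^T\rangle$ has a definite sign because $T_{k+1}>0$ and $c_K\ge0$ (this is where the $\pi/4$ hypothesis is used: it guarantees $\Sigma$ stays in a hemisphere after recentring). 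Everything is now weighted by $s_K^2$, so taking $\max_\Sigma(K\sigma_k+\sigma_{k+2})$ closes the estimate, and equality forces $Y^T=0$, i.e.\ $\nabla r=0$. You should replace your $f=X\cdot\nu$ step by this radial-weight chain and restrict your test functions to the spatial coordinates.
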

\begin{proof}
  Let
    $s_K(r)
  =\begin{cases}
    r\; &\textrm{if }K=0\\
    \sin r\; &\textrm{if }K=1\\
    \sinh r\; &\textrm{if }K=-1
  \end{cases}$ and
  $c_K(r)=s_K'(r)
$.
  We use the following model for $M_K$:
$$M_K=\lbrace x\in \mathbb R^{n+1}:(x^0, x^1,\cdots, x^n)= (c_K(r), s_K(r) \theta ),\;  r\geq 0, \theta \in \mathbb S^{n-1}\rbrace$$
with metric induced from $\sum_{i=1}^n (dx^i)^2+ K(dx^0)^2.$
By applying a rigid motion of $M_K$, we can assume that
\begin{equation*}
\int_\Sigma (x^1,\cdots, x^n)=0.
\end{equation*}
If $K=1$, then by the assumption that $\Sigma$ is contained in some geodesic ball of radius $\frac{\pi}{4}$, we can ensure that $\Sigma$ is contained in the closed ball of radius $\frac{\pi}{2}$ centered at $(1,0\cdots, 0)$. Now, let $O=(1,0,\cdots,0)\in M_K$, $(r, \theta)$ be the geodesic polar coordinates around $O$, $Y=s_K(r)\partial _r$. By the min-max principle and Lemma \ref{lem: identities}, we have (see also \cite{veeravalli2001first})
  \begin{equation}\label{ineq: lambda}
    \begin{split}
       &\lambda_1(T_k) \int_\Sigma s_K^2 \\
       =&\lambda_1(T_k)\sum_{i=1}^n \int_\Sigma (x^i)^2\\
       \le&  \int_\Sigma \sum_{i=1}^n \langle T_k (\nabla x^i), \nabla x^i\rangle\\
       =&\int_\Sigma\sum_{j,l=1}^m \left(\sum_{i=1}^n(\nabla _{e_j}x^i)(\nabla _{e_l}x^i)+K(\nabla _{e_j}x^0)(\nabla _{e_l}x^0)-K(\nabla _{e_j}x^0)(\nabla _{e_l}x^0)\right)\left(T_k\right)_j^l\\
       =&\int_\Sigma \left(\mathrm{tr}\,T_k-K\langle T_k(\nabla c_K), \nabla c_K\rangle\right)\\
       =&\int_\Sigma \left((m-k){m\choose k}\sigma_k-K\langle  T_k(\nabla c_K), \nabla c_K\rangle\right).
    \end{split}
  \end{equation}
  We have (cf. \cite[Lemma 2.6]{heintze1988extrinsic}):
  $$K\int_\Sigma \langle T_k(\nabla c_K), \nabla c_K\rangle=(m-k){m\choose k}\int_\Sigma (\sigma_k c_K^2-c_K \sigma_{k+1} Y\cdot \nu).$$
  Indeed, this is a direct consequence of Corollary \ref{cor: R^n}, Proposition \ref{prop: sphere2} or Proposition \ref{prop: hyper2}, using the fact that $Y^T=-K\nabla c_K$ if $K\ne 0$.
 Plugging this into \eqref{ineq: lambda}, and using $1-c_K^2 = K s_K^2 $, we have
 \begin{equation}\label{ineq: lam2}
   \begin{split}
     \lambda_1 (T_k)\int_\Sigma s_K^2
     \le &(m-k ){m\choose k}\left(K \int_\Sigma \sigma_k s_K^2 +\int_\Sigma c_K \sigma_{k+1} Y\cdot \nu\right).
   \end{split}
 \end{equation}
 As in the proof of Corollary \ref{cor: alex}, we have $\sigma_{k+1}>0$ and $T_{k+1}>0$. Using Proposition \ref{prop: euc1}, \ref{prop: sphere2} or \ref{prop: hyper2}, we obtain  $$\int_\Sigma c_K \sigma_{k+1} Y\cdot \nu\le\int_\Sigma s_Kc_K \sigma_{k+1}\le \int_\Sigma s_K\sigma_{k+2}Y\cdot \nu\le \int_\Sigma s_K^2\sigma_{k+2}.$$
 Combining this with \eqref{ineq: lam2}, we conclude that
 $$\lambda_1(T_k)\le (m-k){m\choose k}\max_\Sigma(K \sigma_k+ \sigma_{k+2}). $$ If the equality holds, then it is easy to see from the above argument that $\nabla r=0$, and thus $\Sigma$ is an immersed sphere.
\end{proof}

\begin{remark}
  If $k=0$, then the assumption on $\sigma_2$ in  Theorem \ref{thm: lambda1} is equivalent to the scalar curvature $R$ of $\Sigma$ satisfies $R>(n-1)(n-2)K$, and the conclusion can be restated as $\lambda_1\le \frac{\max_\Sigma R}{n-2}.$
\end{remark}

By a slightly different argument, we have the following generalization of a result of Garay (\cite{garay1989application}):
\begin{theorem}\label{thm: garay}
  Suppose $\Sigma$ is a closed embedded hypersurface in $\mathbb{R}^n$ such that
 $\sigma_{k}>0$ for some $1\le k\le n-1$ and $\Omega$ is the region bounded by $\Sigma$, then
$$n\lambda(T_k)\mathrm{Vol}(\Omega)\le (m-k){m\choose k}\left(\max_\Sigma \sigma_1\right)\int_\Sigma \sigma_k$$
where $m=n-1$ and $\lambda_1(T_k)$ is the first eigenvalue the second order differential operator $-\mathrm{div}(T_k\circ\nabla )$ on $\Sigma$. The equality holds if and only if $\Sigma$ is a sphere.
\end{theorem}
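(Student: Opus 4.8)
The plan is to follow the proof of Theorem~\ref{thm: lambda1} up to the point where the coordinate functions of $\mathbb R^n$ are fed into the min-max principle, and then — instead of iterating up the mean-curvature ladder (which would need $\sigma_{k+2}>0$) — to combine the resulting inequality with the Hsiung-Minkowski formula and a Cauchy-Schwarz estimate. Write $X=(x^1,\dots,x^n)$ for the position vector and $m=n-1$. First I would record that, since $\Sigma$ is closed and embedded in $\mathbb R^n$ it carries an elliptic point, so $\sigma_k>0$ together with \cite{marques1997stability}~Proposition~3.2 (exactly as in Theorem~\ref{thm: lambda1}) gives $\sigma_j>0$ for $1\le j\le k$ and positivity of the Newton tensors involved; in particular $-\mathrm{div}(T_k\circ\nabla)$ is a positive self-adjoint elliptic operator on $\Sigma$, and its first positive eigenvalue $\lambda_1(T_k)$ admits the usual variational characterization as the minimum of $\int_\Sigma\langle T_k(\nabla\varphi),\nabla\varphi\rangle$ over $\varphi\in C^\infty(\Sigma)$ with $\int_\Sigma\varphi^2=1$ and $\int_\Sigma\varphi=0$.

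Next, after a translation I may assume $\int_\Sigma x^i=0$ for every $i$. Using $\varphi=x^i$ in the min-max principle, summing over $i$, and using $\sum_i\nabla x^i\otimes\nabla x^i=\mathrm{Id}_{T\Sigma}$ (so that $\sum_i\langle T_k(\nabla x^i),\nabla x^i\rangle=\mathrm{tr}\,T_k=(m-k)\binom mk\sigma_k$ by Lemma~\ref{lem: identities}), I obtain
\[
\lambda_1(T_k)\int_\Sigma|X|^2\ \le\ (m-k)\binom mk\int_\Sigma\sigma_k ,
\]
which is the $K=0$ instance of \eqref{ineq: lambda}. Separately, $\overline{\mathrm{div}}\,X=n$ gives $n\,\mathrm{Vol}(\Omega)=\int_\Sigma X\cdot\nu$, and the Cauchy-Schwarz inequality with $|X\cdot\nu|\le|X|$ yields
\[
\big(n\,\mathrm{Vol}(\Omega)\big)^2=\Big(\int_\Sigma X\cdot\nu\Big)^2\le \mathrm{Vol}(\Sigma)\int_\Sigma(X\cdot\nu)^2\le \mathrm{Vol}(\Sigma)\int_\Sigma|X|^2 .
\]
Finally, the Hsiung-Minkowski formula of Corollary~\ref{cor: R^n} (equivalently Proposition~\ref{prop: euc1}) with $f\equiv 1$ and index $0$ gives $\mathrm{Vol}(\Sigma)=\int_\Sigma\sigma_0=\int_\Sigma\sigma_1\,X\cdot\nu\le(\max_\Sigma\sigma_1)\int_\Sigma X\cdot\nu=(\max_\Sigma\sigma_1)\,n\,\mathrm{Vol}(\Omega)$. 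Chaining the last three displays and dividing by $n\,\mathrm{Vol}(\Omega)>0$ gives $n\lambda_1(T_k)\mathrm{Vol}(\Omega)\le(m-k)\binom mk(\max_\Sigma\sigma_1)\int_\Sigma\sigma_k$.

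For the equality case I would trace back: equality forces equality in both Cauchy-Schwarz steps, hence $X\cdot\nu$ is constant and $X$ is everywhere normal to $\Sigma$; therefore $|X|$ is constant, so $\Sigma$ lies on a sphere about the origin and, being a closed connected hypersurface (it bounds $\Omega$), equals it. Conversely a round sphere (necessarily centered at the centroid, consistent with the normalization $\int_\Sigma x^i=0$) realizes equality throughout, using $T_k=\binom{m-1}{k}R^{-k}I$ and the known value of $\lambda_1$ on $\mathbb S^m_R$.

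The two steps to check most carefully are: (i) that $-\mathrm{div}(T_k\circ\nabla)$ is genuinely elliptic — strictly, $\sigma_k>0$ yields only $T_{k-1}>0$, so one invokes the positivity input of \cite{marques1997stability} with the same strength as in Theorem~\ref{thm: lambda1}, so that the min-max characterization is available; and (ii) the inequality $\mathrm{Vol}(\Sigma)\le(\max_\Sigma\sigma_1)\,n\,\mathrm{Vol}(\Omega)$, which is the only place the embedding $\Sigma\subset\mathbb R^n$ and the positivity $\sigma_1>0$ enter. The remaining steps — the min-max computation and the Cauchy-Schwarz estimate — are bookkeeping.
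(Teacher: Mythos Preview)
Your overall architecture matches the paper's: translate so that $\int_\Sigma x^i=0$, feed the coordinate functions into the min--max principle to obtain
\[
\lambda_1(T_k)\int_\Sigma r^2\ \le\ (m-k)\binom{m}{k}\int_\Sigma\sigma_k,
\]
then establish the companion bound $n\,\mathrm{Vol}(\Omega)\le(\max_\Sigma\sigma_1)\int_\Sigma r^2$ and multiply. The paper gets this second bound directly from Corollary~\ref{cor: Euc area} (namely $n\,\mathrm{Vol}(\Omega)\le\int_\Sigma r\le\int_\Sigma\sigma_1 r^2$), and then extracts $\max_\Sigma\sigma_1$ against the nonnegative weight $r^2$.

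Your alternate route to this bound, however, has a gap. The step
\[
\int_\Sigma\sigma_1\,(X\cdot\nu)\ \le\ (\max_\Sigma\sigma_1)\int_\Sigma X\cdot\nu
\]
would follow from the pointwise inequality $\sigma_1\,(X\cdot\nu)\le(\max_\Sigma\sigma_1)(X\cdot\nu)$, but that fails at any point where $X\cdot\nu<0$ and $\sigma_1<\max_\Sigma\sigma_1$. The hypotheses give only $\sigma_1>0$, not convexity, so $\Sigma$ need not be star-shaped with respect to its centroid and $X\cdot\nu$ can change sign; hence the inequality $\mathrm{Vol}(\Sigma)\le(\max_\Sigma\sigma_1)\,n\,\mathrm{Vol}(\Omega)$ is not justified by what you wrote. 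The cleanest repair is exactly the paper's: use the \emph{weighted} Hsiung--Minkowski inequality of Proposition~\ref{prop: euc1} with $f=r$ (equivalently Corollary~\ref{cor: Euc area}) so that $\max_\Sigma\sigma_1$ is pulled out against $r^2\ge0$ rather than against $X\cdot\nu$. With that single step replaced, your chain and your equality analysis go through; your caveat about the ellipticity of $-\mathrm{div}(T_k\circ\nabla)$ is also apt and is a subtlety the paper itself does not address.
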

\begin{proof}
We can assume that the center of mass is $0$.
By \eqref{ineq: lambda}, we have
$$\lambda_1(T_k) \int_\Sigma r^2 \le (m-k){m\choose k}\int_\Sigma \sigma_k.$$
On the other hand, by Corollary \ref{cor: Euc area}, we have
$n\mathrm{Vol}(\Omega)\le \max _\Sigma \sigma_1 \int_\Sigma r^2 .$
The result follows by combining these two inequalities. By Corollary \ref{cor: Euc area}, the equality holds if and only if $\Sigma$ is a sphere.
\end{proof}

To state our next result, we need to define the Steklov eigenvalues, as follows. Let $(M,g)$ be a compact Riemannian manifold with smooth boundary $\partial M=\Sigma$. The first nonzero Steklov eigenvalue is defined as the smallest $p\ne0$ of the following Steklov problem
\begin{equation}\label{eq: stek}
  \begin{cases}
  \overline \Delta f =0\quad &\textrm{on }M\\
  \frac{\partial f}{\partial \nu}=pf \quad &\textrm{on }\partial M
\end{cases}
\end{equation}
where $\nu$ is the unit outward normal of $\partial M$.
It is known that the Steklov boundary problem \eqref{eq: stek} has a discrete spectrum
$$0=p_0< p_1\le p_2\le\cdots \to \infty. $$
Moreover, $p_1$ has the following variational characterization (\cite[Theorem 11]{raulot2012first})
$$p_1=\min_{\int_{\partial M}f=0} \frac{\int_M|\nabla f|^2}{\int_{\partial M}f^2}.$$

We  now prove an upper bound of $p_1$ with the techniques similar to that in Theorem \ref{thm: lambda1}, and using some ideas of Grosjean \cite{grosjean2002upper}. Let $\Sigma$ be an $m$-dimensional Riemannian manifold isometrically immersed in $\mathbb{R}^n$. Again we assume $\{e_i\}_{i=1}^m$ be a local orthonormal frame on $\Sigma$. Suppose $T$ is a divergence free $(1,1)$-tensor on $\Sigma$, we then define a normal vector field $H_T$ by
$$H_T = \sum_{i,j=1}^m T_i^j A_{ij}.$$
Let $X$ be the position vector in $\mathbb{R}^n$. It is easy to see that (\cite[Lemma 2.3]{grosjean2002upper})
\begin{equation}\label{eq: div T}
  \frac{1}{2}\mathrm{div}(T \nabla (|X|^2))= \mathrm{tr}(T)-X\cdot H_T.
\end{equation}
For convenience, we also define on $\partial M$ $$\sigma_{-1}=X\cdot \nu.$$
The reason for this definition is that \eqref{eq: typical} holds (trivially) for $k=-1$ for hypersurface in $\mathbb{R}^n$ ($\alpha=1$).
The following result is the analogue of \cite[Theorem 2.3]{grosjean2002upper}:
\begin{theorem}\label{thm: stek}
  Suppose $M$ is a compact domain in $\mathbb{R}^n$ with smooth $(n-1)$-dimensional boundary $\partial M$ such that its $(k+2)$-mean curvature is positive for some $-1\le k\le m-2$, where $m=n-1$. Then we have
  $$p_1\int_{\partial M}\sigma_k\le n \left(\max _{\partial M}\sigma_{k+2}\right)\mathrm{Vol}(M)$$
  where $p_1$ is the first Steklov eigenvalue of $M$. The equality holds if and only if $M$ is a ball.
\end{theorem}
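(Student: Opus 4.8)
The plan is to mimic the strategy of Theorem~\ref{thm: lambda1}, with the Rayleigh quotient of the Steklov problem playing the role of the min-max characterization of $\lambda_1(T_k)$. Since $M$ is a compact domain in $\mathbb R^n$, the coordinate functions $x^1,\dots,x^n$ are harmonic on $M$, so they are admissible test functions for the Steklov Rayleigh quotient once we normalize $\int_{\partial M}(x^1,\dots,x^n)=0$, which can be arranged by a translation. This gives
\[
  p_1\sum_{i=1}^n\int_{\partial M}(x^i)^2\le \sum_{i=1}^n\int_M|\overline\nabla x^i|^2 = n\,\mathrm{Vol}(M),
\]
i.e. $p_1\int_{\partial M}|X|^2\le n\,\mathrm{Vol}(M)$, using $|X|^2=\sum_i (x^i)^2$ on $\partial M$. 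This is the Steklov analogue of the first line of \eqref{ineq: lambda}, but note it produces $\int_{\partial M}|X|^2$ rather than $\int_{\partial M}\sigma_k$, so I still need to connect $\int_{\partial M}|X|^2$ with $\int_{\partial M}\sigma_k$ and $\int_{\partial M}\sigma_{k+2}$.

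First I would use Proposition~\ref{prop: euc1} (with $f(r)=r^{p}$, i.e. the chain $\int_{\partial M}\sigma_j r^{p}\le\int_{\partial M}\sigma_{j+1}r^{p+1}$ obtained there via Cauchy--Schwarz, valid since $\sigma_{k+2}>0$ forces $T_j>0$ and $\sigma_j>0$ for $j\le k+1$ by \cite{marques1997stability} Proposition~3.2). Applying this starting from $\sigma_k$:
\[
  \int_{\partial M}\sigma_k |X|^2 \;\ge\; \cdots,
\]
but the direction I actually want is the reverse: I want to bound $\int_{\partial M}\sigma_k$ from above in terms of something times $\int_{\partial M}|X|^2$, and simultaneously relate $\int_{\partial M}|X|^2$ to $\mathrm{Vol}(M)$ and $\sigma_{k+2}$. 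The cleanest route, following Grosjean's \cite[Theorem 2.3]{grosjean2002upper} and the convention $\sigma_{-1}=X\cdot\nu$, is to interpret the Hsiung--Minkowski chain as running from $\sigma_{-1}$ up to $\sigma_{k+2}$: by Proposition~\ref{prop: euc1} (or directly Corollary~\ref{cor: Euc area} with $p=0$ extended down to $k=-1$, which holds trivially since $n\mathrm{Vol}(M)=\int_{\partial M}X\cdot\nu=\int_{\partial M}\sigma_{-1}$),
\[
  n\,\mathrm{Vol}(M)=\int_{\partial M}\sigma_{-1}\le \int_{\partial M}\sigma_0\,r\le\cdots\le\int_{\partial M}\sigma_{k+1}r^{k+2}.
\]
The key intermediate estimate I need is $\int_{\partial M}\sigma_k \le \bigl(\max_{\partial M}\sigma_{k+2}\bigr)^{?}\cdot(\text{lower-order})$; more precisely I will use the pointwise Newton inequality $\sigma_k\sigma_{k+2}\le\sigma_{k+1}^2$ to interpolate, together with the divergence identity \eqref{eq: div T} applied to $T=T_{k+1}$, which gives $\tfrac12\int_{\partial M}\mathrm{div}(T_{k+1}\nabla|X|^2)=0$, hence $\int_{\partial M}\mathrm{tr}(T_{k+1})=\int_{\partial M}X\cdot H_{T_{k+1}}=(k+2)\int_{\partial M}H_{k+2}X\cdot\nu$ by Lemma~\ref{lem: identities}(\ref{item: 5}), i.e. $(m-k-1)\binom{m}{k+1}\int_{\partial M}\sigma_{k+1}=(k+2)\binom{m}{k+2}\int_{\partial M}\sigma_{k+2}X\cdot\nu$.

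Putting the pieces together: from Proposition~\ref{prop: euc1} applied to $T_k$ with $f(r)=r$ I get $\int_{\partial M}\sigma_k\le\int_{\partial M}\sigma_{k+1}X\cdot\nu$ (the gradient term has the right sign since $T_k>0$); then another application with $T_{k+1}$ gives $\int_{\partial M}\sigma_{k+1}X\cdot\nu\le\int_{\partial M}\sigma_{k+1}r\le\int_{\partial M}\sigma_{k+2}r X\cdot\nu\le(\max_{\partial M}\sigma_{k+2})\int_{\partial M}r\,X\cdot\nu\le(\max_{\partial M}\sigma_{k+2})\int_{\partial M}|X|^2$; and combining with the Steklov inequality $p_1\int_{\partial M}|X|^2\le n\,\mathrm{Vol}(M)$ yields
\[
  p_1\int_{\partial M}\sigma_k\le p_1\int_{\partial M}\sigma_{k+1}X\cdot\nu\le p_1\bigl(\max_{\partial M}\sigma_{k+2}\bigr)\int_{\partial M}|X|^2\le n\bigl(\max_{\partial M}\sigma_{k+2}\bigr)\mathrm{Vol}(M).
\]
The equality analysis: tracing back, equality forces $X^T=0$ on $\partial M$ (from the Cauchy--Schwarz steps in Proposition~\ref{prop: euc1}, using $T_k>0$), hence $\nabla|X|^2=0$, so $\partial M$ is a sphere centered at $O$; and then the Steklov inequality is an equality for a ball, so $M$ is a ball. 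The main obstacle I anticipate is bookkeeping the correct normalization constants $\binom{m}{k}$ between $H_k$ and $\sigma_k$ through the repeated applications of Proposition~\ref{prop: euc1}, and making sure the $k=-1$ endpoint and the case $k=m-2$ are handled consistently with the stated range $-1\le k\le m-2$; the geometric content is entirely contained in the already-proven propositions.
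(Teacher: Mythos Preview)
Your proposal is correct and follows essentially the same route as the paper: translate so that $\int_{\partial M}X=0$, use the coordinate functions as Steklov test functions to get $p_1\int_{\partial M}|X|^2\le n\,\mathrm{Vol}(M)$, and then chain the Hsiung--Minkowski inequalities coming from Proposition~\ref{prop: euc1} to obtain $\int_{\partial M}\sigma_k\le(\max_{\partial M}\sigma_{k+2})\int_{\partial M}|X|^2$. One minor slip: the step $\int_{\partial M}\sigma_k\le\int_{\partial M}\sigma_{k+1}X\cdot\nu$ is actually the classical Hsiung--Minkowski \emph{equality} (Proposition~\ref{prop: euc1} with $f\equiv 1$, so there is no gradient term), not the $f(r)=r$ case you cite, and the digression through Newton's inequality and the divergence identity for $T_{k+1}$ is unnecessary.
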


\begin{proof}
  Let us assume first $k\ge 0$, then by Proposition \ref{prop: euc1}, we have
  $$\int_{\partial M}\sigma_{k+1} |X|\le \int_{\partial M}\sigma_{k+2}|X|^2.$$
  So by \eqref{eq: div T} and Lemma \ref{lem: identities}, we have
  \begin{align*}
    (m-k)\int_{\partial M}H_k
    =\int_{\partial M}\mathrm{tr}(T_k)
    =&\int_{\partial M}X\cdot H_{T_k}
    =(k+1){m\choose {k+1}}\int_{\partial M}\sigma_{k+1} X\cdot \nu\\
    \le& (k+1){m\choose {k+1}}\int_{\partial M} \sigma_{k+1}|X|\\
    \le& (k+1){m\choose {k+1}}\int_{\partial M} \sigma_{k+2}|X|^2\\
    \le& (k+1){m\choose {k+1}}\left(\max_{\partial M} \sigma_{k+2}\right)\int_{\partial M} |X|^2.
  \end{align*}
  If $k=-1$, then
  $$\int_{\partial M}\sigma_{-1}\le\int_{\partial M}|X|\le \int_{\partial M}\sigma_1 |X|^2\le \max_{\partial M}\sigma_1\int_{\partial M}|X|^2.$$
  By a translation, we can assume that the center of mass is $0$, i.e. $\int_{\partial M}X=0$, and therefore
  $$p_1\int_{\partial M}|X|^2 =p_1\sum_{i=1}^n \int_{\partial M}(X^i)^2\le \int_M\sum_{i=1}^n |\nabla X^i|^2=n \mathrm{Vol}(M).$$
  Combining this with the above inequalities, we can get the result. It is easy to see from the above that the equality holds if and only if $M$ is a ball, noting that the Steklov eigenvalue of the unit ball in $\mathbb{R}^n$ is $1$.
\end{proof}

\begin{remark}
  Note that $\int_{\partial M}\sigma_{-1}=n\mathrm{Vol}(M)$. Therefore when $k=-1$, the estimate in Theorem \ref{thm: stek} becomes $$p_1\le \max_{\partial M}\sigma_1.$$
\end{remark}

\end{document}